\documentclass[10pt]{amsart}

\usepackage{amssymb,latexsym,amsmath}
\usepackage{graphics}                 
\usepackage{color}                    
\usepackage{hyperref}                 
\usepackage[all]{xy}

\begin{document}

\newtheorem{theorem}{Theorem}[section]
\newtheorem{proposition}[theorem]{Proposition}
\newtheorem{lemma}[theorem]{Lemma}
\newtheorem{corollary}[theorem]{Corollary}
\newtheorem{question}[theorem]{Question}
\newtheorem{remark}[theorem]{Remark}
\newtheorem{example}[theorem]{Example}
\newtheorem{conjecture}[theorem]{Conjecture}
\newtheorem{definition}[theorem]{Definition}
\newtheorem{correction}{Correction}

\def\A{{\mathbb{A}}}
\def\C{{\mathbb{C}}}
\def\H{{\mathbb{H}}}
\def\L{{\mathbb{L}}}
\def\P{{\mathbb{P}}}
\def\Q{{\mathbb{Q}}}
\def\Z{{\mathbb{Z}}}

\def\Ch{{\rm Ch}}
\def\id{{\rm id}}
\def\sp{{\rm SP}}

\def\mC{{\mathcal{C}}}
\def\mP{{\mathcal{P}}}
\def\mZ{{\mathcal{Z}}}

\title{Lawson Homology for Abelian Varieties}
\author{Wenchuan Hu}
\date{September 27, 2011}
\date{October 16, 2011}
\keywords{Algebraic cycle; Lawson homology; Abelian varieties}

\address{
School of Mathematics\\
Sichuan University\\
Chengdu 610064 \\
P. R. China
}

\address{\emph{Current Address}:
Vivatsgasse 7\\
53111 Bonn\\
Germany
}

\email{huwenchuan@gmail.com}

\begin{abstract}
In this paper we introduce the Fourier-Mukai transform for Lawson homology of abelian varieties and prove an inversion theorem for
the Lawson homology as well as the  morphic cohomology of abelian varieties. As applications, we obtain the direct sum decomposition
of the Lawson homology and the morphic cohomology groups with rational coefficients,
inspired by Beauville's works on the Chow theory. An analogue  of
the Beauville conjecture for Chow groups is proposed and is shown to be equivalent to the (weak) Suslin conjecture for Lawson homology.
A  filtration on Lawson homology is proposed  and conjecturally it coincides to  the filtration given by the direct sum decomposition
of Lawson homology for  abelian varieties. Moreover, a refined Friedlander-Lawson duality theorem  is obtained for abelian varieties. 
We summarize several related conjectures in Lawson homology theory in the appendix for convenience.
\end{abstract}

\maketitle
\tableofcontents

\section{Notations}
In this paper, all varieties are defined over the complex number field $\C$. Let $X$ be a projective variety of dimension $n$.
Denoted by $\mZ_p(X)$ the space of algebraic $p$-cycles on $X$. Let $\Ch_p(X)$ be the Chow group of $p$-cycles on $X$, i.e.
$\Ch_p(X)=\mZ_p(X)/{\rm \{rational ~equivalence\}}$. Set $\Ch_p(X)_{\Q}:=\Ch_p(X)\otimes \Q$, $\Ch_p(X)=\bigoplus_{p\geq0} \Ch_p(X)$
and $\Ch_*(X)_{\Q}=\bigoplus_{p\geq0} \Ch_p(X)_{\Q}$. Let $A_p(X)$ be the space of $p$-cycles on $X$ modulo
the algebraic equivalence, i.e. $A_p(X)=\mZ_p(X)/{\sim_{alg}}$, where $\sim_{alg}$ denotes the algebraic equivalence.
Set $A_p(X)_{\Q}:=A_p(X)\otimes \Q$, $A(X)=\bigoplus_{p\geq0} A_p(X)$ and $A_*(X)_{\Q}=\bigoplus_{p\geq0} A_p(X)_{\Q}$.


\section{Lawson homology}

The \emph{Lawson homology}
$L_pH_k(X)$ of $p$-cycles for a projective variety is defined by
$$L_pH_k(X) := \pi_{k-2p}({\mathcal Z}_p(X)) \quad for\quad k\geq 2p\geq 0,$$
where ${\mathcal Z}_p(X)$ is provided with a natural topology (cf.
\cite{Friedlander1}, \cite{Lawson1}). It has been extended to define for a quasi-projective  variety by Lima-Filho (cf. \cite{Lima-Filho})
and Chow motives (cf. \cite{Hu-Li}). For general background, the
reader is referred to Lawson' survey paper \cite{Lawson2}.
The definition of Lawson homology has been extended to negative integer $p$. Formally for $p<0$, we have
$L_pH_k(X)=\pi_{k-2p}({\mathcal Z}_{0}(X\times \mathbb{C}^{-p}))=H^{BM}_{k-2p}(X\times\mathbb{C}^{-p})=H^{BM}_k(X)=L_0H_k(X)$
(cf. \cite{Friedlander-Haesemesyer-Walker}), where $H^{BM}_{*}(-)$ denotes the Borel-Moore homology.

In \cite{Friedlander-Mazur}, Friedlander and Mazur showed that there
are  natural transformations, called \emph{Friedlander-Mazur cycle class maps}
\begin{equation}\label{eq01}
\Phi_{p,k}:L_pH_{k}(X)\rightarrow H_{k}(X)
\end{equation}
for all $k\geq 2p\geq 0$.

Recall that Friedlander and Mazur constructed a map called the $s$-map $s:L_pH_k(X)\to L_{p-1}H_k(X)$ such that the cycle class map
$\Phi_{p,k}=s^p$ (\cite{Friedlander-Mazur}).
Explicitly, if $\alpha\in L_pH_k(X)$ is represented by the homotopy class of a contiunous map  $f:S^{k-2p}\to \mZ_p(X)$, then
$\Phi_{p,k}(\alpha)=[f\wedge S^{2p}]$, where $S^{2p}=S^2\wedge\cdots\wedge S^2$ denotes the $2p$-dimensional topological sphere.

Set
{$$
\begin{array}{llcl}
&L_pH_{k}(X)_{hom}&:=&{\rm ker}\{\Phi_{p,k}:L_pH_{k}(X)\rightarrow
H_{k}(X)\};\\
&L_pH_{k}(X)_{\Q}&:=&L_pH_{k}(X)\otimes\Q;\\
&T_pH_{k}(X)&:=&{\rm Image}\{\Phi_{p,k}:L_pH_{k}(X)\rightarrow
H_{k}(X)\};\\
&T_pH_{k}(X,{\mathbb{Q}})&:=&T_pH_{k}(X)\otimes {\mathbb{Q}}.
\end{array}
 $$}

Denoted by $ \Phi_{p,k,\Q}$ the map $ \Phi_{p,k}\otimes{\Q}:L_pH_{k}(X)_{\Q}\rightarrow H_{k}(X,\Q) $.
The \emph{Griffiths group} of dimension $p$-cycles is defined to be
$$
{\rm Griff}_p(X):={\mathcal Z}_p(X)_{hom}/{\mathcal Z}_p(X)_{alg}.$$

Set
$$
\begin{array}{lcl}
{\rm Griff}_p(X)_{\Q}&:=&{\rm Griff}_p(X)\otimes\Q;\\
{\rm Griff}^q(X)&:=&{\rm Griff}_{n-q}(X);\\
{\rm Griff}^q(X)_{\Q}&:=&{\rm Griff}_{n-q}(X)_{\Q}.
\end{array}
$$

It was proved by Friedlander \cite{Friedlander1} that, for any
smooth projective variety $X$, $$L_pH_{2p}(X)\cong {\mathcal
Z}_p(X)/{\mathcal Z}_p(X)_{alg}=A_p(X).$$

Therefore
\begin{eqnarray*}
L_pH_{2p}(X)_{hom}\cong {\rm Griff}_p(X).
\end{eqnarray*}

For any smooth quasi-projective variety
$X$, there is an intersection pairing (cf. \cite{Friedlander-Gabber})  $$L_pH_k(X)\otimes L_qH_l(X)\to L_{p+q-n}H_{k+l-2n}(X),$$
induced by the diagonal map $\Delta: X\to X\times X$. More precisely, the composition $\mZ_p(X)\times \mZ_q(X)\stackrel{\times}{\to}\mZ_{p+q}(X\times X)\stackrel{\Delta^{!}}{\to} Z_{p+q-n}(X)$, where $\times$ is the Cartesian
product of cycles and $\Delta^{!}$ is the Gysin map, factors through $\mZ_p(X)\wedge \mZ_q(X)$. On the level of homotopy groups we have intersection pairing
$$
\pi_{k-2p}(\mZ_p(X))\otimes\pi_{l-2q}(\mZ_q(X))\stackrel{\bullet}{\to}\pi_{k+l-2(p+q)}(\mZ_{p+q-n}(X)),
$$
that is,
$$L_pH_k(X)\otimes L_qH_l(X)\stackrel{\bullet}{\to} L_{p+q-n}H_{k+l-2n}(X).
$$

\section{Correspondences}

In this section we recall basic materials of correspondences and their actions on Lawson homology
(cf. \cite{Friedlander-Mazur}, \cite{Peters}, \cite{Hu-Li}).
For closely related materials  on Chow correspondences we refer to Manin \cite{Manin} and Fulton \cite{Fulton}.

 A \textbf{correspondence} $\Gamma$ from $X$ to
$Y$ is an algebraic cycle (or an equivalent class of cycles depending on the
context) on $X\times Y$.
We denote the group of correspondences of rational equivalence
classes between varieties $X$ and $Y$ by
$$Corr_d(X, Y):={A}_{\dim X+d}(X\times Y).$$

Let $X$, $Y$ be smooth projective varieties and let $\Gamma\in Corr_d(X, Y)$ for  $d\in \Z$. Then for any
element $\alpha\in L_pH_k(X)$, the push-forward morphism is defined
by
$$
\begin{array}{cc}
&\Gamma_*: L_pH_k(X)\rightarrow L_{p+d}H_{k+2d}(Y)\\
& \Gamma_*(\alpha)=p_{2*}(p_1^*\alpha\bullet \Gamma),
\end{array}
$$
where $p_1$ (resp. $p_2$) denotes the projection from $X\times Y$
onto $X$ (resp. $Y$) and  ``$\bullet$" is the intersection product
on the  group  $A(X\times Y)$.

Let $X$, $Y$, $Z$ be smooth projective varieties. The composition of
two correspondences $\Gamma_1\in Corr_{d_1}(X,Y)$ and $\Gamma_2\in
Corr_{d_2}(Y,Z)$ is given by the formula
$$\Gamma_2\circ\Gamma_1=p_{13*}(p_{12}^*\Gamma_1\cdot p_{23}^*\Gamma_2)\in Corr_{d_1+d_2}(X, Z)
$$
where $p_{ij}$, $i,j =1, 2, 3$ are the projection of $X\times Y\times Z$ on the
product of its $i$th and $j$th factors.

\begin{lemma}\label{lemma1}
Let $X, Y, Z$ be smooth projective varieties, $\Gamma_1\in
  Corr_d(X,Y)$ and $\Gamma_2\in Corr_e(Y, Z)$. Then for any $u\in
  L_pH_k(X)$, we have
  $$(\Gamma_2\circ\Gamma_1)_*u=\Gamma_{2*}\Gamma_{1*}u \in L_{p+d+e}H_{k+2d+2e}(Z).$$
\end{lemma}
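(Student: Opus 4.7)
The plan is to reduce both $(\Gamma_2\circ\Gamma_1)_* u$ and $\Gamma_{2*}\Gamma_{1*}u$ to the same expression on $X\times Y\times Z$, exactly as in the classical Chow-group case. Denote by $\pi_X,\pi_Y,\pi_Z$ the projections from $X\times Y\times Z$ to the factors and by $p_{XY},p_{YZ},p_{XZ}$ the projections to the pairwise products; use $\mathrm{pr}_1^{AB},\mathrm{pr}_2^{AB}$ for the two projections from $A\times B$.

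First I would unfold the definition
$$
\Gamma_{2*}\Gamma_{1*}u \;=\; (\mathrm{pr}_2^{YZ})_*\bigl((\mathrm{pr}_1^{YZ})^*\Gamma_{1*}u \;\bullet\; \Gamma_2\bigr),
\qquad
\Gamma_{1*}u \;=\; (\mathrm{pr}_2^{XY})_*\bigl((\mathrm{pr}_1^{XY})^*u \;\bullet\; \Gamma_1\bigr),
$$
and apply flat base change to the Cartesian square with sides $p_{XY},p_{YZ},\mathrm{pr}_2^{XY},\mathrm{pr}_1^{YZ}$ to move the pullback inside the pushforward. Combined with the compatibility of flat pullback with the intersection pairing, this rewrites $(\mathrm{pr}_1^{YZ})^*\Gamma_{1*}u$ as $(p_{YZ})_*\bigl(\pi_X^*u\bullet p_{XY}^*\Gamma_1\bigr)$. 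Then, invoking the projection formula along $p_{YZ}$ to absorb $\Gamma_2$, together with $\mathrm{pr}_2^{YZ}\circ p_{YZ}=\pi_Z$, I obtain
$$
\Gamma_{2*}\Gamma_{1*}u \;=\; (\pi_Z)_*\bigl(\pi_X^*u \;\bullet\; p_{XY}^*\Gamma_1 \;\bullet\; p_{YZ}^*\Gamma_2\bigr).
$$

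Symmetrically, expanding
$$
(\Gamma_2\circ\Gamma_1)_*u \;=\; (\mathrm{pr}_2^{XZ})_*\bigl((\mathrm{pr}_1^{XZ})^*u \;\bullet\; (p_{XZ})_*(p_{XY}^*\Gamma_1\cdot p_{YZ}^*\Gamma_2)\bigr)
$$
and applying the projection formula along $p_{XZ}$ (with associativity of $\bullet$ and the identities $p_{XZ}^*(\mathrm{pr}_1^{XZ})^*=\pi_X^*$, $\mathrm{pr}_2^{XZ}\circ p_{XZ}=\pi_Z$), I obtain the same three-fold intersection $(\pi_Z)_*\bigl(\pi_X^*u\bullet p_{XY}^*\Gamma_1\bullet p_{YZ}^*\Gamma_2\bigr)$. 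Matching the two expressions concludes.

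The main obstacle is not the bookkeeping above — once all the cancellations are set up the manipulation is formal — but rather the need to invoke three compatibility properties of the Lawson-homology intersection pairing of Friedlander--Gabber that parallel the classical Chow theory: (i) the projection formula $f_*(a\bullet f^*b)=f_*a\bullet b$ for a proper morphism $f$; (ii) flat base change together with compatibility of flat pullback with $\bullet$; (iii) associativity of $\bullet$. The subtlety is that $\Gamma_1,\Gamma_2$ enter only as classes in $A_\ast(-)$, i.e.\ in the bottom Lawson groups $L_pH_{2p}$, so one must check that pulling them back to $X\times Y\times Z$ and intersecting with the Lawson class $\pi_X^*u$ agrees — via the identification $L_pH_{2p}\cong A_p$ of Friedlander — with the pairing used in defining $\Gamma_{i*}$. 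Once these properties are quoted from the cited literature, the argument is the formal calculation sketched above.
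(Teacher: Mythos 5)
Your proof is correct. The paper itself does not spell out an argument for this lemma; it simply cites Hu--Li, Prop.~4.2. Your derivation supplies the content, and it is the standard reduction: rewrite both sides as a pushforward along $\pi_Z$ of the threefold intersection $\pi_X^* u\bullet p_{XY}^*\Gamma_1\bullet p_{YZ}^*\Gamma_2$ on $X\times Y\times Z$, using (i) the projection formula along the proper projections, (ii) flat base change in the Cartesian square $X\times Y\times Z \to Y\times Z$, $X\times Y\to Y$, and (iii) associativity of $\bullet$ together with its compatibility with flat pullback. All three ingredients are available in the Lawson-homology setting: the paper itself invokes exactly these from Peters' Lemma~11 in the proof of Lemma~\ref{lemma3}, and they are established there and in Friedlander--Gabber. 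Your index bookkeeping is consistent: if $u\in L_pH_k(X)$ then $\pi_X^*u\bullet p_{XY}^*\Gamma_1\bullet p_{YZ}^*\Gamma_2$ lands in $L_{p+d+e+\dim Y}H_{k+2d+2e+2\dim Y}(X\times Y\times Z)$, and pushing forward along $\pi_Z$ (which has relative dimension $\dim X+\dim Y$) gives the claimed target group once one remembers that $\Gamma_i$ sits in degree $\dim X+d$ (resp.\ $\dim Y+e$). The subtlety you flag -- that $\Gamma_1,\Gamma_2$ live in $A_*\cong L_*H_{2*}$ and one must check the $\bullet$ used to define $\Gamma_{i*}$ agrees with the one used in the triple intersection -- is indeed the one point to be careful about, and it is handled precisely by Friedlander's identification $L_pH_{2p}\cong A_p$ together with the fact that Friedlander--Gabber's pairing restricted to classes of the form $[\Gamma]\otimes[\Gamma']$ recovers the usual intersection in $A_*$.
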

\begin{proof}
cf. \cite[Prop. 4.2]{Hu-Li}.
\end{proof}

For a projective morphism $f:X_1\to X_2$ the \emph{graph} of $f$ is defined to be the correspondence
$$
\Gamma_f:=(\id_{X_1}, f)_*(X_1)\in A(X_1\times X_2).
$$
\begin{lemma}\label{lemma2}
\begin{enumerate}
\item $(\Gamma_{f})_*(\alpha)=f_*(\alpha)$ for $\alpha\in L_pH_k(X_1)$.
\item  $(^t\Gamma_{f})_*(\beta)=f^*(\beta)$ for $\beta\in L_qH_l(X_2)$.
\end{enumerate}
\end{lemma}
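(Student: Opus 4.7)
The plan is to reduce both identities to the projection formula for the intersection pairing on Lawson homology, combined with the functoriality of push-forward under composition. Let $j := (\id_{X_1}, f) : X_1 \to X_1 \times X_2$, so that $\Gamma_f = j_*[X_1]$ by definition; note that $p_1 \circ j = \id_{X_1}$ and $p_2 \circ j = f$, and $j$ is a regular closed embedding because $X_1$ and $X_2$ are smooth.

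For part (1), I would start from the defining formula
\[
(\Gamma_f)_*(\alpha) = p_{2*}\bigl(p_1^*\alpha \bullet \Gamma_f\bigr) = p_{2*}\bigl(p_1^*\alpha \bullet j_*[X_1]\bigr).
\]
The key step is to apply the projection formula for the intersection pairing $\bullet$ on Lawson homology (which follows from the Friedlander--Gabber intersection product on cycle spaces together with the fact that $j$ factors the Gysin pullback):
\[
p_1^*\alpha \bullet j_*[X_1] = j_*\bigl(j^*p_1^*\alpha \bullet [X_1]\bigr) = j_*\bigl((p_1\circ j)^*\alpha\bigr) = j_*(\alpha).
\]
Then functoriality of push-forward gives $p_{2*}(j_*(\alpha)) = (p_2 \circ j)_*(\alpha) = f_*(\alpha)$, which is the desired identity.

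For part (2), I would repeat the argument with the roles of the two factors swapped. Writing $\tilde{j} := (f, \id_{X_1}) : X_1 \to X_2 \times X_1$, we have $\,^t\Gamma_f = \tilde{j}_*[X_1]$, and if $q_1 : X_2 \times X_1 \to X_2$, $q_2 : X_2\times X_1 \to X_1$ denote the projections, then $q_1\circ\tilde{j} = f$ and $q_2\circ\tilde{j} = \id_{X_1}$. The same projection formula yields
\[
(^t\Gamma_f)_*(\beta) = q_{2*}\bigl(q_1^*\beta \bullet \tilde{j}_*[X_1]\bigr) = q_{2*}\tilde{j}_*\bigl(f^*\beta\bigr) = f^*(\beta),
\]
where in the final equality one uses that $f^*$ on Lawson homology is defined precisely as the Gysin pullback along the smooth graph embedding, consistent with the construction of $q_1^*$ on $L_qH_l$.

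The only substantive obstacle is the projection formula $p_1^*\alpha \bullet j_*[X_1] = j_*(j^*p_1^*\alpha)$ at the level of Lawson homology classes; this is not proved in the excerpt but is a standard consequence of the compatibility between the Friedlander--Gabber intersection pairing, Gysin maps for regular embeddings, and push-forward, and it is routinely invoked in the correspondence formalism of \cite{Friedlander-Mazur} and \cite{Hu-Li}. Once this is granted, both assertions are immediate from the factorizations $p_2\circ j = f$ and $q_2\circ\tilde{j} = \id_{X_1}$.
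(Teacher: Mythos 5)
Your argument is essentially identical to the paper's: both start from the defining formula, apply the projection formula to pull $p_1^*\alpha \bullet j_*[X_1]$ inside the push-forward along $j=(\id_{X_1},f)$, and then use $p_1\circ j = \id_{X_1}$ and $p_2\circ j = f$. The paper's only additional content is to cite the projection formula explicitly as Lemma 11(c) of Peters \cite{Peters}, rather than leaving it as an invoked standard fact.
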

\begin{proof}
From the projection formula (cf. \cite[Lemma 11. c)]{Peters}), we have
$$
\begin{array}{ccl}
(\Gamma_{f})_*(\alpha)
&=&p_{2*}\big((\id_{X_1},f)_*(X_1)\bullet p_1^*\alpha\big)\\
&=&p_{2*}(\id_{X_1},f)_*\big((\id_{X_1},f)^* p_1^*\alpha\big)\\
&=&f_*(\alpha),
\end{array}
$$
since $p_1\circ (\id_{X_1}, f)=\id_{X_1}$ and $p_2\circ (\id_{X_1}, f)=f$.

The proof of (2) is similar.
\end{proof}

\begin{lemma}\label{lemma3}
Let $f_i:Y_i\to X_i$, for $i=1,2$ be projective morphisms of smooth projective varieties. Then
\begin{enumerate}
\item $(f_1\times f_2)^*Z=^t\Gamma_{f_2}\circ Z\circ \Gamma_{f_1}$ for all $Z\in A(X_1\times X_2)$;
\item $(f_1\times f_2)_*\tilde{Z}=\Gamma_{f_2}\circ \tilde{Z}\circ ^t\Gamma_{f_1}$ for all $\tilde{Z}\in A(Y_1\times Y_2)$.
\end{enumerate}
\end{lemma}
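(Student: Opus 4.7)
The plan is to reduce both assertions to four building-block identities describing how the graph correspondence $\Gamma_f$ and its transpose ${}^t\Gamma_f$ interact with an arbitrary correspondence under composition. For a projective morphism $f:Y\to X$ of smooth projective varieties, the identities I will establish first are
\begin{enumerate}
\item[(i)] $Z\circ \Gamma_f=(f\times \id_{X'})^{*}Z$ for $Z\in A(X\times X')$;
\item[(ii)] ${}^t\Gamma_f\circ W=(\id_{X'}\times f)^{*}W$ for $W\in A(X'\times X)$;
\item[(iii)] $\tilde Z\circ {}^t\Gamma_f=(f\times \id_{Y'})_{*}\tilde Z$ for $\tilde Z\in A(Y\times Y')$;
\item[(iv)] $\Gamma_f\circ \tilde W=(\id_{Y'}\times f)_{*}\tilde W$ for $\tilde W\in A(Y'\times Y)$.
\end{enumerate}

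Each of these is verified by unwinding the definition of composition and exploiting that $\Gamma_f$ is the image of the closed embedding $\iota_f=(\id_Y,f):Y\hookrightarrow Y\times X$. For example, in (i) the composition expands to $p_{13*}(p_{12}^{*}\Gamma_f\cdot p_{23}^{*}Z)$ on $Y\times X\times X'$, with $p_{12}^{*}\Gamma_f=\Gamma_f\times X'$ supported on the image of $\iota_f\times \id_{X'}:Y\times X'\hookrightarrow Y\times X\times X'$. A base-change argument identifies the refined intersection $(\Gamma_f\times X')\cdot p_{23}^{*}Z$ with $(\iota_f\times\id_{X'})_{*}(f\times \id_{X'})^{*}Z$; since $p_{13}\circ(\iota_f\times \id_{X'})=\id_{Y\times X'}$, applying $p_{13*}$ yields (i). Identities (ii)–(iv) follow by exactly the same device, swapping the roles of push-forward and pull-back in the usual way.

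Once (i)–(iv) are in place, both assertions follow by associativity together with the functoriality of $(-)^{*}$ and $(-)_{*}$. For (1), applying (i) and then (ii) gives
$$
{}^t\Gamma_{f_2}\circ Z\circ \Gamma_{f_1}
={}^t\Gamma_{f_2}\circ\bigl((f_1\times \id_{X_2})^{*}Z\bigr)
=(\id_{Y_1}\times f_2)^{*}(f_1\times \id_{X_2})^{*}Z
=(f_1\times f_2)^{*}Z.
$$
Dually, for (2), applying (iii) and then (iv) gives
$$
\Gamma_{f_2}\circ \tilde Z\circ {}^t\Gamma_{f_1}
=\Gamma_{f_2}\circ\bigl((f_1\times \id_{Y_2})_{*}\tilde Z\bigr)
=(\id_{X_1}\times f_2)_{*}(f_1\times \id_{Y_2})_{*}\tilde Z
=(f_1\times f_2)_{*}\tilde Z.
$$

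The only non-trivial point is the verification of the four building blocks, where one must keep careful track of the supports and correctly apply the projection formula and base change. Because $\iota_f$ is a regular closed embedding of smooth varieties and the ambient projections $p_{ij}$ are flat, the computations reduce to standard intersection-theoretic identities in $A(-)$, and I expect no obstacle beyond this bookkeeping.
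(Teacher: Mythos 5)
Your proposal is correct and follows essentially the same strategy as the paper: reduce each assertion to elementary identities showing that pre-/post-composition with $\Gamma_f$ or ${}^t\Gamma_f$ amounts to pullback or pushforward along $f\times\id$ or $\id\times f$, and verify these by expanding the composition on the triple product and applying base change plus the projection formula to the graph embedding $\iota_f=(\id_Y,f)$. The only cosmetic difference is that the paper proves one such identity directly and then derives $(\id\times f_2)^*Z={}^t\Gamma_{f_2}\circ Z$ by transposing, whereas you compute all four building blocks symmetrically; both are fine, and your version has the minor advantage of treating part (2) explicitly rather than leaving it as ``similar.''
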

\begin{proof}
We give a proof for 1), the proof of 2) is similar. To prove 1) it is enough to show that  $(f_1\times id)^*Z=Z\circ \Gamma_{f_1}$ and
$(id\times f_2)^*Z=^t\Gamma_{f_2}\circ Z$. Denote by $q_1$ the projection $Y_1\times X_2\to Y_1$ and $p_{ij}$ the projections of $Y_1\times X_1\times X_2$, e.g. $p_{12}:Y_1\times X_1\times X_2\to Y_1\times X_1$ is the projection onto the  Cartesian product first two varieties. Then
by applying the base change formula for Lawson homology (cf. \cite[Lemma 11 a)]{Peters}) to $(id_{Y_1}, f_1)\circ q_1=p_{12}\circ((id_{Y_1},f_1)\times id_{X_2})$ and the projection formula (cf. \cite[Lemma 11 c)]{Peters}), we have
$$
\begin{array}{ccl}
Z\circ \Gamma_{f_1}&=&p_{13*}(p_{23}^*Z\cdot p_{12}^*(id_{Y_1},f_1)_*(Y_1))\\
&=&p_{13*}(p_{23}^*Z\cdot ((id_{Y_1},f_1)\times id_{X_2})_*q_1^*(Y_1))\\
&=&p_{13*}((id_{Y_1},f_1)\times id_{X_2})_*((id_{Y_1},f_1)\times id_{X_2})^*p_{23}^*Z\cdot q_1^*(Y_1))\\
&=&p_{13*}((id_{Y_1},f_1)\times id_{X_2})_*((id_{Y_1},f_1)\times id_{X_2})^*p_{23}^*Z)\\
&=&(f_1\times id_{X_2})^*Z,
\end{array}
$$
where we used $p_{13}\circ ((id_{Y_1},f_1)\times id_{X_2})=id_{Y_1\times X_2}$  and $p_{23}\circ ((id_{Y_1},f_1)\times id_{X_2})=f_1\times id_{X_2}$. From this we get
$$
(id_{X_1}\times f_2)^*Z= {^t}((f_2\times id_{X_1})^* {^t}Z)={^t}( ^tZ\circ \Gamma_{f_2})={^t}\Gamma_{f_2}\circ Z.
$$
\end{proof}

From Lemma \ref{lemma1}-\ref{lemma3},  we have
\begin{equation}\label{eq02}
(f_1\times f_2)^*Z_*(\alpha)=f_2^*(Z_*f_{1*}(\alpha)),  \forall \alpha\in L_*H_*(Y_1)
\end{equation}
and
\begin{equation}\label{eq03}
(f_1\times f_2)_*Z_*(\beta)=f_{2*}(Z_* f_1^*(\beta)), \forall \beta\in L_*H_*(X_1).
\end{equation}

\section{Fourier-Mukai transform}
Let $X$ be an abelian variety of dimension $n$ and let $\widehat{X}$ be the dual abelian variety of $X$, i.e., $\widehat{X}=Pic^0(X)$. Consider the Poincar\'{e} bundle $\mP=\mP_X\in Pic(X\times\widehat{X})=\Ch^1(X\times \widehat{X})$.
The correspondence
$$
e^{\mP}:=\sum_{i\geq0}\frac{1}{i!}\mP^i\in \Ch_*(X)_{\Q}
$$
is well-defined since the sum is finite, where $\mP^i$ denotes the $i$-th intersection product.

The Fourier-Mukai transform on Chow group $\Ch(X)_{\Q}$ with rational coefficients is defined to be the homomorphism of groups
$F=F_X:\Ch(X)_{\Q}\to \Ch(\widehat{X})_{\Q} $, $\alpha\mapsto p_{2*}(e^{\mP}\cdot p_1^*\alpha)$.

Similarly, the \textbf{Fourier-Mukai transform} on Lawson homology $L_*H_*(X)_{\Q}$ with rational coefficients  is defined to be the homomorphism of groups
$F=F_X:L_*H_*(X)_{\Q}\to L_*H_*(\widehat{X})_{\Q} $, $\alpha\mapsto p_{2*}(e^{\mP}\cdot p_1^*\alpha)$.
In particular,  the {Fourier-Mukai transform} on  homology $H_*(X,\Q)$ with rational coefficients  is defined to be the homomorphism of groups
$F=F_X: H_*(X,\Q)\to H_*(\widehat{X},\Q) $, $\alpha\mapsto p_{2*}(e^{\mP}\cdot p_1^*\alpha)$.

\begin{theorem}[Inversion Theorem]\label{Th4} Let $X$ be an abelian variety of dimension $n$. Then we have
 $$F_{\widehat{X}}\circ F_X=(-1)^d (-1)^*_X:L_*H_*(X)_{\Q}\to L_*H_*(X)_{\Q},$$
 where $-1:X\to X$ is the multiplication by $-1$ on $X$.
\end{theorem}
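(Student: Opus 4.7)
The plan is to reduce the statement to the Mukai--Beauville inversion identity for composite correspondences, and then let that identity act on Lawson homology via the formalism of Section~3.

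First, $F_X$ and $F_{\widehat X}$ are by construction push-forwards by the correspondences $e^{\mP_X}\in A(X\times\widehat X)_{\Q}$ and $e^{\mP_{\widehat X}}\in A(\widehat X\times X)_{\Q}$ (under the biduality $\widehat{\widehat X}=X$). Hence, by Lemma~\ref{lemma1},
$$
F_{\widehat X}\circ F_X \;=\; \bigl(e^{\mP_{\widehat X}}\circ e^{\mP_X}\bigr)_{*}: L_*H_*(X)_{\Q}\to L_*H_*(X)_{\Q}.
$$
It therefore suffices to identify the composite correspondence in $A(X\times X)_{\Q}$.

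Second, that composite is a purely cycle-theoretic object on $X\times X$, independent of any Lawson-homology input. The classical computation of Mukai (derived category) and Beauville (Chow groups), which rests on the biextension identity $(m_X\times\id_{\widehat X})^{*}\mP\cong p_{13}^{*}\mP\otimes p_{23}^{*}\mP$ for the Poincar\'e bundle together with the see-saw principle, yields
$$
e^{\mP_{\widehat X}}\circ e^{\mP_X}\;=\;(-1)^{n}\,\Gamma_{(-1)_X}\qquad\text{in}\ \Ch(X\times X)_{\Q},
$$
and this descends to $A(X\times X)_{\Q}$ under the surjection from rational to algebraic equivalence.

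Third, Lemma~\ref{lemma2}(1) identifies $(\Gamma_{(-1)_X})_{*}$ on $L_*H_*(X)_{\Q}$ with push-forward by $(-1)_X$; since $(-1)_X$ is an involution, this coincides with $(-1)_X^{*}$. Combining the three steps gives $F_{\widehat X}\circ F_X=(-1)^{n}(-1)_X^{*}$, which is the asserted formula (so $d=\dim X=n$).

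The principal obstacle is the cycle-theoretic identity $e^{\mP_{\widehat X}}\circ e^{\mP_X}=(-1)^{n}\Gamma_{(-1)_X}$ itself: one expands $\sum_{i,j\geq 0}\tfrac{1}{i!\,j!}\,\mP_{\widehat X}^{\,j}\circ \mP_X^{\,i}$, identifies the dimension-$n$ contributions that survive the proper push-forward from $X\times\widehat X\times X$, and pins down the sign. This is Beauville's original Chow-theoretic computation, and it transports unchanged to the present setting because the composition law $\circ$ used in Section~3 coincides with the Chow-theoretic one.
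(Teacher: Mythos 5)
Your proposal is correct and follows essentially the same route as the paper: reduce to the Mukai--Beauville identity $e^{\mP_{\widehat X}}\circ e^{\mP_X}=(-1)^n\Gamma_{(-1)_X}$ in $\Ch(X\times X)_{\Q}$, pass to $A(X\times X)_{\Q}$, and invoke the fact that correspondence action on Lawson homology depends only on the class modulo algebraic equivalence. Your extra observation that $(-1)_{X*}=(-1)_X^{*}$ because $(-1)_X$ is an involution, and your note that $d$ in the statement should read $n=\dim X$, make the argument slightly more explicit than the paper's.
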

\begin{proof}
By definition, we need to show that
$$
e^{\mP_{\widehat{X}}}\circ e^{\mP_X}=(-1)^d\Gamma_{-1}\in L_*H_*(X)_{\Q}.
$$

Since (cf. \cite{Mukai})
$$
e^{\mP_{\widehat{X}}}\circ e^{\mP_X}=(-1)^d\Gamma_{-1}\in \Ch_*(X\times X)_{\Q},
$$
we get  $$e^{\mP_{\widehat{X}}}\circ e^{\mP_X}=(-1)^d\Gamma_{-1}\in A_*(X\times X)_{\Q}$$

Since by construction the action of correspondence $e^{\mP_{\widehat{X}}}\circ e^{\mP_X}$ on Lawson homology depends only on its class in  $A_*(X\times X)$ (cf. \cite{Peters}), it implies that $e^{\mP_{\widehat{X}}}\circ e^{\mP_X}=(-1)^d\Gamma_{-1}:L_*H_*(X)_{\Q}\to L_*H_*(X)_{\Q}.$


\end{proof}

\begin{proposition}\label{prop5}
 Let $f:Y\to X$  be an isogeny of abelian varieties. Then for  all $\alpha\in L_*H_*(Y)_{\Q}$ and $\beta\in L_*H_*(X)_{\Q}$
\begin{enumerate}
 \item $F_X f_*(\alpha)=\hat{f^*}F_Y(\alpha)$;
\item $F_Yf^*(\beta)=\hat{f_*} F_X(\beta)$.
\end{enumerate}

\end{proposition}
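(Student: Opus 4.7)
The plan is to import Mukai's original argument from Chow theory via the calculus of correspondences developed in Lemmas \ref{lemma1}--\ref{lemma3}. The crucial geometric input is the Mukai identity on Poincar\'e bundles: for any isogeny $f : Y \to X$ with dual isogeny $\widehat{f} : \widehat{X} \to \widehat{Y}$,
$$
(f \times \id_{\widehat{X}})^* \mP_X \; = \; (\id_Y \times \widehat{f})^* \mP_Y \qquad \text{in } \Ch^1(Y \times \widehat{X}),
$$
which is classical and follows from the universal property of the Poincar\'e bundle. Since pullback is a ring homomorphism on Chow groups, taking $i$-th powers and summing converts this into $(f \times \id)^* e^{\mP_X} = (\id \times \widehat{f})^* e^{\mP_Y}$; the identity then descends to $A_*(Y \times \widehat{X})_{\Q}$.

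For (1), I would first rewrite both sides in correspondence-composition form. By Lemmas \ref{lemma1}--\ref{lemma2},
$$
F_X f_*(\alpha) = (e^{\mP_X} \circ \Gamma_f)_*(\alpha), \qquad \widehat{f}^* F_Y(\alpha) = ({^t}\Gamma_{\widehat{f}} \circ e^{\mP_Y})_*(\alpha),
$$
so (1) reduces to the identity of correspondences
$$
e^{\mP_X} \circ \Gamma_f \; = \; {^t}\Gamma_{\widehat{f}} \circ e^{\mP_Y} \qquad \text{in } A_*(Y \times \widehat{X})_{\Q}.
$$
But Lemma \ref{lemma3}(1) translates the two sides of the exponentiated Mukai identity into exactly this form: applied with $(f_1, f_2) = (f, \id_{\widehat{X}})$ it yields $(f \times \id)^* e^{\mP_X} = e^{\mP_X} \circ \Gamma_f$, and applied with $(f_1, f_2) = (\id_Y, \widehat{f})$ it yields $(\id \times \widehat{f})^* e^{\mP_Y} = {^t}\Gamma_{\widehat{f}} \circ e^{\mP_Y}$. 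Comparing the two finishes part (1).

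Part (2) is handled by an entirely analogous computation. Writing $F_Y f^*(\beta) = (e^{\mP_Y} \circ {^t}\Gamma_f)_*(\beta)$ and $\widehat{f}_* F_X(\beta) = (\Gamma_{\widehat{f}} \circ e^{\mP_X})_*(\beta)$ via Lemmas \ref{lemma1}--\ref{lemma2}, and using Lemma \ref{lemma3}(2) to rewrite the compositions as pushforwards, one reduces (2) to the equality $(f \times \id_{\widehat{Y}})_* e^{\mP_Y} = (\id_X \times \widehat{f})_* e^{\mP_X}$ in $A_*(X \times \widehat{Y})_{\Q}$. This pushforward version of the Mukai identity follows from the pullback version by composing with $(f \times \widehat{f})_*$, applying the projection formula, and using the equality $\deg f = \deg \widehat{f}$ for dual isogenies (valid after tensoring with $\Q$).

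The only genuine obstacle is ensuring that the Mukai identity, which is a priori stated at the level of line bundles, actually controls the induced action on Lawson homology. This is automatic because the surjection $\Ch_* \to A_*$ makes the identity survive modulo algebraic equivalence, and by construction the correspondence action on $L_*H_*(-)_{\Q}$ factors through $A_*(-)_{\Q}$ (as already exploited in the proof of Theorem \ref{Th4}). No analytic input beyond Lemmas \ref{lemma1}--\ref{lemma3} is required, so the entire proof runs purely formally.
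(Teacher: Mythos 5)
Your proof of part (1) is correct and is essentially the paper's own argument, just written one level lower: where the paper manipulates the correspondence \emph{actions} directly via Equation (\ref{eq02}), you first convert each side into a composition of correspondences via Lemmas \ref{lemma1}--\ref{lemma2}, use Lemma \ref{lemma3}(1) to identify those compositions with the two sides of the exponentiated Mukai identity, and then descend to $A_*$-classes. Same key input (the universal-property identity $(f\times\id)^*\mP_X = (\id\times\widehat f)^*\mP_Y$), same conclusion, just a slightly more explicit bookkeeping of the correspondence calculus.

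For part (2) you genuinely diverge from the paper. The paper deduces (2) formally from (1): apply (1) to the dual isogeny $\widehat f\colon\widehat X\to\widehat Y$ and sandwich with the Fourier inversion Theorem \ref{Th4}, using that $(-1)^*$ commutes with everything in sight. Your route is a direct geometric computation: you reduce (2) via Lemmas \ref{lemma1}--\ref{lemma3}(2) to the pushforward identity $(f\times\id_{\widehat Y})_*e^{\mP_Y} = (\id_X\times\widehat f)_*e^{\mP_X}$ in $A_*(X\times\widehat Y)_{\Q}$, and you derive that from the pullback version by pushing forward along $f\times\widehat f$, factoring it two ways, applying the projection formula to produce a factor $\deg f$ on one side and $\deg\widehat f$ on the other, and cancelling since $\deg f = \deg\widehat f$ and the coefficients are rational. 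This is a correct argument and has the small advantage of not invoking the inversion theorem, so it is logically independent of Theorem \ref{Th4}; the cost is that you have to establish the pushforward Mukai identity, which the paper sidesteps entirely. One minor phrasing nit: $\deg f=\deg\widehat f$ is an equality of integers and holds on the nose; what actually requires $\Q$-coefficients in your argument is the invertibility of $\deg f$ so that you may cancel it from both sides.
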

\begin{proof}
\begin{enumerate}
\item The universal property of the Poincar\'{e} bundle implies that
$$(f\times \id_{\widehat{X}})\mP_X=(\id_Y\times \hat{f})^*\mP_Y.$$

By Equation (\ref{eq02}), we get
$$
\begin{array}{ccl}
F_Xf_*(\alpha)&=& (e^{\mP_X})_*f_*\alpha\\
&=&(f\times id_{\widehat{X}})^* (e^{\mP_X})_*\alpha\\
&=&(id_Y\times \hat{f})^* (e^{\mP_Y})_*\alpha\\
&=&\hat{f}^* (e^{\mP_Y})_*\alpha\\
&=&\hat{f^*}F_Y(\alpha).
\end{array}
$$

\item By applying (1) to $\hat{f}:\widehat{X}\to \widehat{Y}$ we get
$$
\begin{array}{ccll}
F_Yf^*&=&(-1)^g(-1)^*_{\widehat{Y}}F_Yf^*F_{\widehat{X}}F_X & \hbox{(By  Theorem \ref{Th4})}\\
&=&(-1)^g(-1)^*_{\widehat{Y}}F_YF_{\widehat{Y}}\hat{f}^*F_X & \hbox{(By Part (1))}\\
&=&\hat{f}^*F_X. & \hbox{(By Theorem \ref{Th4})}\\
\end{array}
$$
\end{enumerate}
\end{proof}

Now we show that the Fourier-Mukai transform $F:L_*H_*(X)_{\Q}\to L_*H_*(X)_{\Q}$ on Lawson homology groups is compatible to
that on the  singular homology with rational coefficients. That is, we have the following result.
\begin{proposition}\label{prop6}
There is a commutative diagram
$$
\xymatrix{
&L_pH_k(X)_{\Q}\ar[r]^{F_X}\ar[d]^{\Phi_{p,k,\Q}}&L_pH_k(\widehat{X})_{\Q}\ar[d]^{\Phi_{p,k,\Q}}\\
&H_k(X,\Q) \ar[r]^{F_X} &H_k(\widehat{X},\Q)
}
$$
\begin{proof}
It follows from the definitions of the Fourier-Mukai transform on Lawson homology and the singular homology.
\end{proof}
\end{proposition}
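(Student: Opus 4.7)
The plan is to reduce commutativity of the square to three separate naturality statements, one for each of the operations that compose $F$. Writing $F_X(\alpha) = p_{2*}\bigl(e^{\mP} \cdot p_1^*\alpha\bigr)$ by the same formula on both Lawson homology and singular homology with rational coefficients, the diagram commutes provided that the Friedlander-Mazur cycle class map $\Phi_{*,*,\Q}$ commutes with (a) flat pullback along the smooth morphism $p_1$, (b) proper pushforward along $p_2$, and (c) intersection with the correspondence $e^{\mP}$.

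For (a) and (b) I would appeal to the explicit description of $\Phi_{p,k}$ as the homotopy-theoretic $s$-map iterated $p$ times, together with the formula $\Phi_{p,k}(\alpha) = [f \wedge S^{2p}]$ recorded earlier in the paper. Both flat pullback and proper pushforward are realized by continuous maps of cycle spaces, and these evidently commute with smashing against topological spheres; the resulting naturalities are standard consequences of the construction of Lawson homology and the $s$-map, used repeatedly in the papers of Friedlander-Mazur and Lima-Filho.

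The only nonformal step is (c), the compatibility with the intersection $e^{\mP} \cdot (-)$. Here I would invoke the Friedlander-Gabber intersection pairing on Lawson homology that was recalled in Section 2, whose defining feature is precisely that $\Phi$ intertwines it with the intersection product on singular homology. Since $e^{\mP} = \sum_{i\geq 0} \mP^i/i!$ is a finite $\Q$-linear combination of intersection powers of the divisor class $\mP$, the map $\Phi$ commutes with forming $e^{\mP}$ and with intersecting it against $p_1^*\alpha$.

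Combining the three naturalities yields
\[
\Phi\bigl(p_{2*}(e^{\mP} \cdot p_1^*\alpha)\bigr) \;=\; p_{2*}\bigl(\Phi(e^{\mP} \cdot p_1^*\alpha)\bigr) \;=\; p_{2*}\bigl(\Phi(e^{\mP}) \cdot p_1^*\Phi(\alpha)\bigr) \;=\; F_X\bigl(\Phi(\alpha)\bigr),
\]
proving the proposition. The only real obstacle is locating the correct foundational citations for the naturality of $\Phi$ under $p_1^*$ and $p_{2*}$ and for the Friedlander-Gabber pairing; once these are in hand the verification is a routine unwinding of definitions.
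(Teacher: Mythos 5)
Your proposal is correct and takes essentially the same approach as the paper, whose proof is a single line (``it follows from the definitions''). Your decomposition into three naturalities of $\Phi$ --- commutativity with $p_1^*$, with $p_{2*}$, and with the correspondence action of $e^{\mP}$ --- is exactly what the paper's sentence is asking the reader to unwind, and those naturalities are part of the correspondence formalism already recalled in Section 3 (Peters, Hu--Li) together with the Friedlander--Gabber intersection pairing from Section 2.
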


\section{Pontryagin Product}

In this section,  $X$ denotes an abelian variety of dimension $n$. Let $\mu:X\times X\to X$ denote the sum map $\mu(z,z')=z+z'$. The morphism
$\mu$ induces a continuous map  $\mu_*:\mZ_p(X\times X)\to \mZ_p(X)$ between the space of algebraic cycles.

For $Z=\sum n_iV_i\in \mZ_p(X)$ and $Z'=\sum m_jW_j\in \mZ_q(X)$, we set $Z\times Z'=\sum n_im_jV_i\times W_j\in
\mZ_{p+q}(X\times X)$. So we get a bilinear continuous map $\times:\mZ_p(X)\times \mZ_q(X)\to \mZ_{p+q}(X\times X)$.

Therefore we get a continuous composed map $\mu_*\circ \times:\mZ_p(X)\times \mZ_q(X)\to \mZ_{p+q}(X)$.
We choose the ``empty cycle" $\emptyset_p$ (resp. $\emptyset_q$, $\emptyset_{p+q}$) as the base point
 in $\mZ_p(X)$ (resp. $\mZ_q(X)$, $\mZ_{p+q}(X)$) so that each of $\mZ_p(X)$,  $\mZ_q(X)$ and $\mZ_{p+q}(X)$ is a point topological abelian group.
Note that we have both $\mu_*\circ \times (Z\times \emptyset_q)=\emptyset_{p+q}$ and  $\mu_*\circ \times (\emptyset_p\times Z')=\emptyset_{p+q}$
This implies that the map $\times$ factors through $\mZ_p(X)\wedge \mZ_q(X)$, i.e., there is a commutative diagram of continuous maps
$$
\xymatrix{\mZ_p(X)\times \mZ_q(X)\ar[rd]^{\times}\ar@{.>}[d]&&\\
\mZ_p(X)\wedge \mZ_q(X)\ar@{.>}[r]&\mZ_{p+q}(X\times X)\ar[r]^-{\mu_*} &\mZ_{p+q}(X).
}
$$

For $\alpha\in L_pH_k(X)$ and $\beta\in L_qH_l(X)$, the \textbf{Pontryagin Product}
$$*: L_pH_k(X)\otimes L_qH_l(X)\to L_{p+q}H_{k+l}(X), ~(\alpha,\beta)\mapsto \alpha*\beta
$$
is defined to be  the image of  the homotopy class of $f\wedge g:S^{k+l-2(p+q)}\to \mZ_{p+q}(X)$ under $\mu_*$, where $f:S^{k-2p}\to \mZ_p(X)$
 (resp. $g:S^{l-2q}\to \mZ_q(X)$) is a representative element of $\alpha$ (resp. $\beta$) and $f\wedge g$ is the composed map
$S^{k+l-2(p+q)}=S^{k-2p}\wedge S^{l-2q}\to \mZ_p(X)\wedge \mZ_q(X)\to \mZ_{p+q}(X\times X)$.
The homotopy class of $f\wedge g$ defines an element in $L_{p+q}H_{k+l}(X\times X)$ and so $\mu_*([f\wedge g])$ gives  us
an element in  $L_{p+q}H_{k+l}(X)$. By comparing  the intersection  of Lawson homology defined by Friedlander and Gabber (\cite{Friedlander-Gabber}),
 this product  $\alpha*\beta:=\mu_*([f\wedge g])$ is equal to $\mu_*(p_1^*\alpha\bullet p_2^*\beta)$.

\begin{lemma}
 The Pontryagin product is bilinear, associative and anti-commutative for the second index on $L_*H_*(X)$.
\end{lemma}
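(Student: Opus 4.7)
The plan is to reduce each of the three assertions to a formal property of one of the three basic ingredients that go into the Pontryagin product: the external product of cycles, the pushforward $\mu_\ast$, and the pullbacks/intersection pairing that show up in the equivalent formula
$$
\alpha\ast\beta=\mu_\ast(p_1^\ast\alpha\bullet p_2^\ast\beta)
$$
noted immediately before the lemma. Having two descriptions at hand is convenient, since bilinearity is cleanest in the intersection formulation, while associativity and graded commutativity are cleanest in the smash formulation.

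Bilinearity is the most routine: in the formula $\alpha\ast\beta=\mu_\ast(p_1^\ast\alpha\bullet p_2^\ast\beta)$ every constituent is a group homomorphism on $L_\ast H_\ast$---the pullbacks $p_1^\ast$, $p_2^\ast$ and the proper pushforward $\mu_\ast$ are additive, and the Friedlander--Gabber intersection pairing $\bullet$ is bilinear by construction. Alternatively, with the smash definition one invokes the topological abelian group structure on $\mathcal{Z}_p(X)$, which is what makes the induced pairing on $\pi_\ast$ bilinear.

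For associativity I plan to lift everything one level up to $X^3$. Given representatives $f,g,h$ of $\alpha\in L_pH_k(X)$, $\beta\in L_qH_l(X)$, $\gamma\in L_rH_m(X)$, both $(\alpha\ast\beta)\ast\gamma$ and $\alpha\ast(\beta\ast\gamma)$ are represented by composites of the shape
$$
S^{k-2p}\wedge S^{l-2q}\wedge S^{m-2r}\xrightarrow{f\wedge g\wedge h}\mathcal{Z}_p(X)\wedge\mathcal{Z}_q(X)\wedge\mathcal{Z}_r(X)\xrightarrow{\times}\mathcal{Z}_{p+q+r}(X^3)\xrightarrow{\nu_\ast}\mathcal{Z}_{p+q+r}(X),
$$
the only difference being whether $\nu=\mu\circ(\mu\times\id_X)$ or $\nu=\mu\circ(\id_X\times\mu)$. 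Associativity of addition on the abelian variety $X$ forces these two maps $X^3\to X$ to coincide, so the two compositions agree on the nose. The smash product of spheres and the external product of cycles are associative up to the evident homeomorphisms, so nothing further is needed.

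Graded commutativity is the one place where a sign has to be tracked, and it is what I expect to require the most care. Since $X$ is abelian the sum map satisfies $\mu\circ\tau_X=\mu$, where $\tau_X\colon X\times X\to X\times X$ is the swap; at the level of cycles this gives $\mu_\ast(Z\times Z')=\mu_\ast(Z'\times Z)$, so the two orderings of the external product are identified after pushing forward by $\mu$. What remains is the comparison of the two smash orderings of spheres: the twist $S^{k-2p}\wedge S^{l-2q}\cong S^{l-2q}\wedge S^{k-2p}$ has degree $(-1)^{(k-2p)(l-2q)}=(-1)^{kl}$, and combining this with the cycle-level identification yields $\alpha\ast\beta=(-1)^{kl}\beta\ast\alpha$, which is graded commutativity in the second index. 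This sphere-twist sign is the only genuine subtlety in the proof.
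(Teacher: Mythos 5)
Your proof is correct and follows the same route as the paper: bilinearity from additivity of the constituent maps (the paper instead cites the topological-abelian-group structure on $\mathcal{Z}_p(X)$), associativity via associativity of the smash product, and graded commutativity via the sphere-twist sign. Two places where you are more careful are worth noting. First, for associativity the paper only says ``the associativity follows from the associativity of $\wedge$,'' whereas you correctly observe that one must also use the associativity of the addition law $\mu$ on $X$ to see that $\mu\circ(\mu\times\id_X)=\mu\circ(\id_X\times\mu)$; your lift to $X^3$ makes this explicit. Second, on the sign: the paper's proof asserts $f\wedge g=(-1)^{k+l-2(p+q)}g\wedge f=(-1)^{k+l}g\wedge f$, but the twist $S^{k-2p}\wedge S^{l-2q}\to S^{l-2q}\wedge S^{k-2p}$ has degree $(-1)^{(k-2p)(l-2q)}$, which reduces mod $2$ to $(-1)^{kl}$, exactly as you compute; the exponent $k+l$ in the paper is not in general congruent to $kl$ mod $2$ (take $k=l=1$), and the Koszul sign $(-1)^{kl}$ is the standard one for graded commutativity of a Pontryagin product. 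So your version of the argument is the one that should be retained; the paper's sign appears to be a slip.
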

\begin{proof} The bilinear property follows exactly from the above definition of $*$.
Note that $f\wedge g=(-1)^{k+l-2(p+q)}g\wedge f=(-1)^{k+l}g\wedge f$, where $f,g$ are given as above.
 Hence we get the anti-commutativity for the second index.
 The associativity follows from the associativity of $\wedge$.
\end{proof}

\begin{proposition}\label{Prop4}
 For all $\alpha, \beta\in L_*H_*(X)$, we have
\begin{enumerate}
 \item $F(\alpha*\beta)=F(\alpha)\cdot F(\beta)$;

\item $F(\alpha\cdot \beta)=(-1)^n F(\alpha)*F(\beta)$.
\end{enumerate}
\end{proposition}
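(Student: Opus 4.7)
The plan is to imitate Mukai's classical argument but carry it out entirely within Lawson homology, using the base-change and projection formulas for correspondences on $L_*H_*(-)_{\Q}$ that were already invoked in Lemmas~\ref{lemma1}--\ref{lemma3} (and cited there from Peters and Friedlander--Gabber). The starting point is the universal bihomomorphism property of the Poincar\'e bundle, which on $X\times X\times\widehat{X}$ reads
$$m^{*}\mP \;=\; q_{13}^{*}\mP + q_{23}^{*}\mP,$$
where $m=\mu\times \id_{\widehat{X}}$ and $q_{13},q_{23}$ are the projections onto the two $(X,\widehat{X})$-factors. Taking formal exponentials converts this additive identity into the multiplicative one $m^{*}e^{\mP} = q_{13}^{*}e^{\mP}\cdot q_{23}^{*}e^{\mP}$ in $A_*(X\times X\times\widehat{X})_{\Q}$, which is the algebraic engine of both formulas.

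For part~(1) I would start from $F(\alpha*\beta)=p_{2*}(e^{\mP}\cdot p_1^{*}\mu_{*}(\alpha\times\beta))$. The diagram with $q_{12}:X\times X\times\widehat{X}\to X\times X$ and $m$ is a fiber square over $\mu$ and $p_1$, so base change gives $p_1^{*}\mu_{*}=m_{*}q_{12}^{*}$; the projection formula for $m$ then moves $e^{\mP}$ inside the pushforward, and substituting the factorization of $m^{*}e^{\mP}$ together with $q_{12}^{*}(\alpha\times\beta)=q_1^{*}\alpha\cdot q_2^{*}\beta$ produces
$$F(\alpha*\beta) \;=\; q_{3*}\bigl(q_{13}^{*}(e^{\mP}\cdot p_1^{*}\alpha)\cdot q_{23}^{*}(e^{\mP}\cdot p_1^{*}\beta)\bigr).$$
Finally, viewing $X\times X\times\widehat{X}$ as the fiber product $(X\times\widehat{X})\times_{\widehat{X}}(X\times\widehat{X})$ and applying base change together with the projection formula to the pair of $p_2$'s rewrites the right-hand side as $p_{2*}(e^{\mP}\cdot p_1^{*}\alpha)\cdot p_{2*}(e^{\mP}\cdot p_1^{*}\beta) = F(\alpha)\cdot F(\beta)$.

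For part~(2) I would argue formally from~(1). Applying~(1) on $\widehat{X}$ to the pair $(F_X\alpha,F_X\beta)$ and invoking Theorem~\ref{Th4} twice yields
$$F_{\widehat{X}}(F_X\alpha * F_X\beta) \;=\; F_{\widehat{X}}F_X\alpha \cdot F_{\widehat{X}}F_X\beta \;=\; (-1)^{*}_{X}(\alpha\cdot\beta),$$
since $(-1)^{2n}=1$ and $(-1)^{*}_{X}$ commutes with the intersection product. Applying $F_X$ once more and using the Inversion Theorem on $\widehat{X}$ on the left, together with Proposition~\ref{prop5} applied to the isogeny $-1:X\to X$ (whose dual is again $-1$) on the right to move $(-1)^{*}_{X}$ past $F_X$, gives $(-1)^{n}(-1)^{*}_{\widehat{X}}(F_X\alpha*F_X\beta)=(-1)^{*}_{\widehat{X}}F_X(\alpha\cdot\beta)$; cancelling the involution $(-1)^{*}_{\widehat{X}}$ rearranges to the desired identity.

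The main obstacle is concentrated in part~(1): one must confirm that each base-change and projection-formula step is genuinely available in Lawson homology. All varieties involved are smooth projective (products of $X$ and $\widehat{X}$), all maps are either flat (the projections) or proper morphisms of abelian varieties (the multiplication and graph maps), so the cited base-change and projection formulas apply without any transversality subtlety. Once~(1) is established, part~(2) is purely formal and requires no further geometric input.
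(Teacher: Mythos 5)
Your proof is correct and follows essentially the same route as the paper's: part~(1) uses the bihomomorphism property of the Poincar\'e bundle $(\mu\times 1)^{*}\mP = q_{13}^{*}\mP + q_{23}^{*}\mP$ (whence $(\mu\times 1)^{*}e^{\mP}=q_{13}^{*}e^{\mP}\cdot q_{23}^{*}e^{\mP}$) together with the base-change and projection formulas in Lawson homology, exactly as in the paper; and part~(2) is the formal deduction from~(1) and the Inversion Theorem that the paper merely asserts. Your write-up is in fact a bit cleaner in two places: you state the bihomomorphism identity additively on the divisor class (the paper's displayed formula with ``$\cdot$'' only makes sense after exponentiating), and you spell out the bookkeeping with $(-1)^{*}$ and Proposition~\ref{prop5} that the paper leaves implicit in part~(2).
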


\begin{proof}
\begin{enumerate}
 \item
 We denote by $q_i$ and $q_{ij}$ the projections of $X\times X\times \widehat{X}$. Note first that
  $(\mu\times 1_{\widehat{X}})^*{\mP_X}=q_{13}^*{\mP_X}\cdot q_{23}^*{\mP_X}$ holds in $\Ch_*(X\times X\times \widehat{X})$
  (cf. \cite[Lemma 14.1.7]{Birkenhake-Lange}) implies that $(\mu\times 1_{\widehat{X}})^*e^{\mP_X}=q_{13}^*e^{\mP_X}\cdot q_{23}^*e^{\mP_X}$
  holds in $\Ch_*(X\times X\times \widehat{X})$ and so in $A(X\times X\times \widehat{X})$. Then
$$
\begin{array}{ccll}
 F(\alpha*\beta)
&=&p_{2*}\big(e^{\mP_X}\cdot p_1^*(\mu(\alpha\times\beta))\big)&\\

&=& p_{2*}(e^{\mP_X}\cdot (\mu\times 1_{\widehat{X}})_*q^*_{12}(\alpha\times\beta))&\\

&=& p_{2*}(e^{\mP_X}\cdot (\mu\times 1_{\widehat{X}})_*q^*_{1}\alpha\cdot q_2^*\beta))&\\

&=& p_{2*}(\mu\times 1_{\widehat{X}})_*\big((\mu\times 1_{\widehat{X}})^*e^{\mP_X}\cdot q^*_{1}\alpha\cdot q_2^*\beta)\big)&\\

&=& p_{2*}(\mu\times 1_{\widehat{X}})_*\big(q_{13}^*e^{\mP_X}\cdot q_{23}^*e^{\mP_X}\cdot q^*_{1}\alpha\cdot q_2^*\beta)\big)&\\

&=& q_{3*}(q_{13}^*e^{\mP_X}\cdot q_1^*\cdot q_{23}^*e^{\mP_X}\cdot q_2^*\beta)&\\
&&\hbox{\quad(since $q_3=\mu\times id_{\widehat{X}}$)}&\\

&=& p_{2*}q_{13*}(q_{13}^*e^{\mP_X}\cdot q_1^*\alpha\cdot q_{23}^*e^{\mP_X}\cdot q_2^*\beta)&\\
&&\hbox{\quad{(since $q_3=p_2\circ q_{13}$)}}&\\

&=& p_{2*}q_{13*}\big(q_{13}^*(e^{\mP_X}\cdot p_1^*\alpha)\cdot q_{23}^*(e^{\mP_X}\cdot p_1^*\beta)\big)\\
&&\hbox{\quad{(since $q_1=p_1\circ q_{13},q_2=p_1\circ q_{23}$)}}&\\

&=& p_{2*}\big(e^{\mP_X}\cdot p_1^*\alpha\cdot q_{13*}q_{23}^*(e^{\mP_X}\cdot p_1^*\beta)\big)\\
&&\hbox{\quad{(since $q_3=p_2\circ q_{13}$ and $q_3=p_2\circ q_{23}$)}}&\\

&=& p_{2*}\big(e^{\mP_X}\cdot p_1^*\alpha\cdot p_{2}^*p_{2*}(e^{\mP_X}\cdot p_1^*\beta)\big)\\

&=& p_{2*}(e^{\mP_X}\cdot p_1^*\alpha)\cdot p_{2*}(e^{\mP_X}\cdot p_1^*\beta))\\

&=& F(\alpha)\cdot F(\beta).\\
\end{array}
$$

\item The statement (2) follows from Part (1) by the Inversion Theorem \ref{Th4}.
\end{enumerate}
\end{proof}

\begin{proposition}
The Pontryagin product is compatible with the natural transformation $\Phi_{p,k}:L_pH_k(X)\to H_k(X)$. More precisely, we have the
following commutative diagram:
$$
\xymatrix{L_pH_k(X)\otimes L_qH_l(X)\ar[r]^-{*}\ar[d]^{\Phi_{p,k}\otimes \Phi_{q,l}}&L_{p+q}H_{k+l}(X)\ar[d]^{\Phi_{p+q,k+l}}\\
H_k(X)\otimes H_l(X)\ar[r]^-{*}& H_{k+l}(X).
}
$$
\end{proposition}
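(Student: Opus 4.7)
The plan is to reduce the compatibility to standard naturality properties of the cycle class map $\Phi=s^p$, by expressing the Pontryagin product on both sides as the pushforward along $\mu$ of an external product. The final sentence of the Pontryagin product construction gives $\alpha*\beta=\mu_*(p_1^*\alpha\bullet p_2^*\beta)$, where $\bullet$ is the Friedlander--Gabber intersection pairing on $L_{p+q}H_{k+l}(X\times X)$; the corresponding identity $\Phi(\alpha)*\Phi(\beta)=\mu_*(p_1^*\Phi(\alpha)\cdot p_2^*\Phi(\beta))$ on singular homology is immediate from the definition of the Pontryagin product there.

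With these two identifications in place, the commutativity of the diagram reduces to the chain
$$
\Phi(\alpha*\beta)=\Phi\bigl(\mu_*(p_1^*\alpha\bullet p_2^*\beta)\bigr)=\mu_*\bigl(\Phi(p_1^*\alpha)\cdot \Phi(p_2^*\beta)\bigr)=\mu_*\bigl(p_1^*\Phi(\alpha)\cdot p_2^*\Phi(\beta)\bigr)=\Phi(\alpha)*\Phi(\beta).
$$
The second equality uses that $\Phi$ commutes with the proper pushforward $\mu_*$ and that $\Phi$ carries the Friedlander--Gabber intersection pairing on Lawson homology to the ordinary intersection pairing on singular homology; the third equality uses that $\Phi$ commutes with the flat pullbacks $p_i^*$.

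The naturality of $\Phi$ under pushforward and flat pullback is essentially formal, since $s$ is induced by smashing with $S^2$, while $\mu_*$ and $p_i^*$ are induced by continuous maps between cycle spaces that commute with this smashing; these are exactly the naturality properties implicit in the correspondence formalism of Section~3. The only non-formal ingredient, and the one I would flag as the main point of the argument, is the compatibility of the Lawson-homology intersection pairing with the singular intersection pairing under $\Phi$, which is the Friedlander--Gabber theorem and which I would cite rather than reprove. Since all sphere factors that would need to be commuted past one another when unwinding the smash-product definition are even-dimensional, no signs enter the identification.
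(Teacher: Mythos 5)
Your proof is correct, but it takes a genuinely different route from the paper's. You work with the alternative description $\alpha*\beta=\mu_*(p_1^*\alpha\bullet p_2^*\beta)$ (which the paper records at the end of the Pontryagin-product construction but does not use in the proof), and reduce the claim to three naturality statements: $\Phi$ commutes with proper pushforward, with flat pullback, and with the Friedlander--Gabber intersection pairing. The paper instead works directly with the smash-product description: representing $\alpha,\beta$ by maps $f,g$, it uses the explicit formula $\Phi_{p,k}([f])=[f\wedge S^{2p}]$ to compute $\Phi_{p,k}(\alpha)*\Phi_{q,l}(\beta)=\mu_*[f\wedge S^{2p}\wedge g\wedge S^{2q}]$, then shuffles the even-dimensional sphere factors to get $\mu_*[f\wedge g\wedge S^{2(p+q)}]=\mu_*\Phi_{p+q,k+l}([f\wedge g])$, and finally invokes naturality of $\Phi$ under $\mu_*$ alone. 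The paper's route is thus more elementary and self-contained: the only naturality it needs is with respect to proper pushforward, whereas you must additionally invoke compatibility of $\Phi$ with the intersection pairing (a nontrivial input you rightly flag and attribute to Friedlander--Gabber) and with flat pullback. Your approach does have the advantage of sitting more comfortably inside the correspondence formalism of Section 3 and of generalizing without change once the pairing compatibility is known; the paper's approach has the advantage of exposing exactly why there are no signs (the inserted spheres are even-dimensional) without invoking the pairing machinery. Both are correct.
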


\begin{proof}
Let $\alpha\in L_pH_k(X)$ (resp. $\beta\in L_qH_l(X)$) be represented by the homotopy
class of the map $f:S^{k-2p}\to \mZ_{p}(X)$ (resp. $g:S^{l-2q}\to \mZ_{q}(X)$).
Then by definition we have $\alpha*\beta=\mu_*([f\wedge g])$.

Recall that from the property of $s$-map (cf. \cite[Chapter 6]{Friedlander-Mazur} ), one has the explicitly formulas
$$
\Phi_{p,k}([f])=[f\wedge S^{2p}]; \Phi_{q,l}([g])=[g\wedge S^{2q}].
$$

Hence
$$
\begin{array}{cclr}
\Phi_{p,k}(\alpha)*\Phi_{q,l}(\beta)
&=&\Phi_{p,k}([f])*\Phi_{q,l}([g])&\\
&=&\mu_*[f\wedge S^{2p}\wedge g\wedge S^{2q}]&\\
&=&\mu_*[f\wedge g\wedge S^{2p}\wedge S^{2q}]&\\
&=&\mu_*[f\wedge g\wedge S^{2(p+q)}]&\\
&=&\mu_*\big(\Phi_{p+q,k+l}([f\wedge g])\big)&\\
&=&\Phi_{p+q,k+l}\big(\mu_*([f\wedge g])\big)&\\
&=&\Phi_{p+q,k+l}\big(\alpha*\beta\big).&\\
\end{array}
$$
The penultimate equality holds since $\Phi_{p+q,k+l}:L_{p+q}H_{k+l}(X)\to H_{k+l}(X)$ is a natural transformation
from Lawson homology to the singular homology.  This completes the proof of the commutative diagram.
\end{proof}

\begin{remark}
From the proof of the above proposition, we observe that the Pontryagin  product $*$ is compatible with the $s$-map.
\end{remark}


\section{Decompositions of Lawson homology groups for abelian varieties}
\label{sec6}
Let $X$ be an abelian variety of dimension $n$. For each integer $m$, there is a homomorphism $m_X:X\to X$ defined by $x\mapsto m\cdot x.$
Recall that we have cycle class map $\Phi_{p,k}\otimes{\Q}:L_pH_k(X)_{\Q}\to H_k(X,\Q)$ for all $k\geq 2p\geq 0$.
By considering elements in $H_{k}(X,\Q)\cong H^{k}(X,\Q)$ as the dual of differential forms, it is easy to see that the induced map
$m_{X*}:H_{k}(X,\Q)\to H_{k}(X,\Q)$ is multiplication by $m^k$.

There is an eigenspace decomposition of $L_pH_k(X)_{\Q}$ for each pair of $p,k$ such that $k\geq 2p\geq 0$. Set
$$
L_pH_k(X)_{\Q}^s:=\{ \alpha\in L_pH_k(X)_{\Q}| m_{X*}\alpha=m^{k+s}\alpha, \forall ~m \in\Z\}
$$

\begin{theorem}\label{Th5}
Let $X$ be an abelian variety of dimension $n$. Then we have the following decomposition
 \begin{equation}\label{eq04}
 L_pH_k(X)_{\Q}=\bigoplus^{n-[\frac{k+1}{2}]}_{s=p-k}L_pH_k(X)_{\Q}^s,
 \end{equation}
 where $[a]$ denotes the largest integer less than or equal to $a$.
\end{theorem}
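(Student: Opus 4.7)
The plan is to show that $m_{X*}$ acts on $L_pH_k(X)_{\Q}$ as a polynomial in $m$ of the form $\sum_j m^j Q_j$ with $j$ ranging over a finite set of integers, and then extract the eigenspaces as images of orthogonal idempotents $Q_j$. This mimics Beauville's Fourier--Mukai argument for Chow groups and relies on the standard identity $(m_X\times\id_{\widehat{X}})^*\mP = m\cdot\mP$ in $\Ch^1(X\times\widehat{X})$, equivalently $(m_X\times\id)^*e^{\mP} = e^{m\mP}$.

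Combining the base-change identity $p_1^*\circ m_{X*} = (m_X\times\id)_*\circ p_1^*$, the projection formula, and the relation $p_2\circ(m_X\times\id) = p_2$, I first compute
\[
F_X(m_{X*}\alpha) \;=\; p_{2*}\bigl(e^{m\mP}\cdot p_1^*\alpha\bigr) \;=\; \sum_{i\geq 0}\frac{m^i}{i!}\,p_{2*}\bigl(\mP^i\cdot p_1^*\alpha\bigr) \;=:\; \sum_{i\geq 0} m^i F_X^{(i)}(\alpha).
\]
A Lawson-bidegree count places $F_X^{(i)}(\alpha)\in L_{p+n-i}H_{k+2n-2i}(\widehat{X})_{\Q}$, which vanishes unless $p\leq i\leq p+n$ (there are no $q$-cycles on $\widehat{X}$ for $q>n=\dim\widehat{X}$ or $q<0$). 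Applying $F_X^{-1} = (-1)^n(-1)_X^*\circ F_{\widehat{X}}$ from Theorem~\ref{Th4}---an operator independent of $m$---then yields
\[
m_{X*}\alpha \;=\; \sum_{i=p}^{p+n} m^i\, Q_i(\alpha), \qquad Q_i := F_X^{-1}\circ F_X^{(i)}.
\]
Since $m_{X*}\alpha\in L_pH_k(X)_{\Q}$ for every integer $m\geq 1$, a Vandermonde argument in each Lawson bidegree forces $Q_i(\alpha)\in L_pH_k(X)_{\Q}$ for each $i$. The hypothesis $k\geq 2p$ yields $p\leq\lfloor k/2\rfloor$, so $[p,\,p+n]\subseteq [p,\,n+\lfloor k/2\rfloor]$; hence with $s:=i-k$ the exponents lie in the interval $[p-k,\,n-\lfloor(k+1)/2\rfloor]$ appearing in the theorem.

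Finally, comparing coefficients of $m^a n^b$ in the semigroup identity $(mn)_{X*} = m_{X*}\circ n_{X*}$ forces $Q_a\circ Q_b = \delta_{ab}Q_a$, and setting $m=1$ gives $\sum_i Q_i = \id$. Hence the $Q_i$ are pairwise orthogonal idempotents summing to the identity; their images decompose $L_pH_k(X)_{\Q}$, and a second Vandermonde argument shows that the image of $Q_i$ equals $L_pH_k(X)_{\Q}^{i-k}$. I do not foresee a significant obstacle here: the entire argument is a faithful translation of Beauville's proof to the Lawson framework, with the bidegree accounting in the Fourier--Mukai expansion playing the role that codimension counting plays in the Chow-group case. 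The only point requiring mild care is verifying that $F_X^{-1}F_X^{(i)}(\alpha)$ already lies in $L_pH_k(X)_\Q$ for each $i$ (rather than merely after summation), which is what the bidegree-wise Vandermonde step settles.
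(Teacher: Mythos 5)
Your proposal is, in substance, the same Fourier--Mukai argument the paper uses: both hinge on the Poincar\'e-bundle identity $(m_X\times\id_{\widehat X})^*\mP = m\mP$ (the paper uses the dual form $(\id_X\times m_{\widehat X})^*\mP = m\mP$ inside Lemma~\ref{Lemma6}), on the bidegree decomposition of $F_X(\alpha)$ over $\widehat X$, and on the Inversion Theorem~\ref{Th4}. The paper phrases this as direct eigenvector identification --- Lemma~\ref{Lemma6} shows each graded piece $\beta_q$ of $F_X(\alpha)$ is an eigenvector for $m_{\widehat X}^*$, Proposition~\ref{Prop7} transports that back to $X$, and $\alpha = (-1)^n(-1)^*\sum_q F_{\widehat X}(\beta_q)$ realizes the decomposition --- whereas you package the same content into Beauville-style orthogonal idempotents $Q_i$ via the identity $(mn)_{X*}=m_{X*}n_{X*}$. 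These are equivalent routes to Equation~(\ref{eq04}).

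One caveat on the index bookkeeping: the claim that $F_X^{(i)}(\alpha)$ vanishes for $i>p+n$ because ``there are no $q$-cycles for $q<0$'' conflicts with the paper's convention $L_qH_l = L_0H_l = H_l^{BM}$ for $q<0$, under which $L_{p+n-i}H_{k+2n-2i}(\widehat X)_{\Q}$ can a priori be nonzero up to $i = n+\lfloor k/2\rfloor$; this is precisely why the sum in Lemma~\ref{Lemma6} runs from $q = p-\lfloor k/2\rfloor$, which may be negative, up to $n$. With the correct upper bound $i\leq n+\lfloor k/2\rfloor$ the exponents $s=i-k$ still lie in $[p-k,\, n-\lfloor(k+1)/2\rfloor]$, and your Vandermonde and idempotent steps go through unchanged, so this is a minor slip in the justification of the range rather than a structural gap.
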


We have a direct corollary from Theorem \ref{Th5}.
\begin{corollary}\label{cor10}
Let $X$ be an abelian variety of dimension $n$. Then
$L_pH_k(X)_{\Q}^s=0$ for $s> n-[\frac{k+1}{2}]$ or $s<p-k$.
\end{corollary}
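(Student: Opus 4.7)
My plan is to adapt Beauville's eigenspace decomposition for Chow groups of abelian varieties to the Lawson-homology setting, using the Fourier--Mukai formalism of Section 4. The pivotal step is the identity
\[
F_X \circ m_{X*} \;=\; (e^{m\mP})_{*}\;:\;L_{*}H_{*}(X)_{\Q}\longrightarrow L_{*}H_{*}(\widehat X)_{\Q}.
\]
To prove it, I would apply Lemma \ref{lemma3} with $f_{1}=m_{X}$, $f_{2}=\id_{\widehat X}$, and $Z=e^{\mP}$ to get $e^{\mP}\circ\Gamma_{m_{X}}=(m_{X}\times\id_{\widehat X})^{*}e^{\mP}$ in $A_{*}(X\times\widehat X)_{\Q}$. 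The universal property of the Poincar\'e bundle gives $(m_{X}\times\id_{\widehat X})^{*}\mP=m\cdot\mP$ in $\text{Pic}(X\times\widehat X)$, hence in $A^{1}(X\times\widehat X)_{\Q}$; since intersection is a ring homomorphism, this exponentiates to $(m_{X}\times\id)^{*}e^{\mP}=e^{m\mP}$. Lemma \ref{lemma1} then transports the correspondence-level equality to the equality of induced maps on Lawson homology.

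With this identity in hand, I would fix $\alpha\in L_{p}H_{k}(X)_{\Q}$ and decompose
\[
F_{X}(\alpha)\;=\;\sum_{i=0}^{2n}\beta_{i},\qquad \beta_{i}:=\tfrac{1}{i!}(\mP^{i})_{*}\alpha\in L_{p+n-i}H_{k+2(n-i)}(\widehat X)_{\Q}.
\]
The $\beta_{i}$ lie in pairwise distinct bidegrees, hence are linearly independent when nonzero, and the key identity yields $F_{X}(m_{X*}\alpha)=\sum_{i}m^{i}\beta_{i}$. Let $I\subseteq\Z_{\geq 0}$ be the finite set of indices for which the target $L_{p+n-i}H_{k+2(n-i)}(\widehat X)_{\Q}$ can be nonzero. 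For $|m|\geq 2$, set $P(T):=\prod_{i\in I}(T-m^{i})\in\Q[T]$; then $F_{X}(P(m_{X*})\alpha)=\sum_{i}P(m^{i})\beta_{i}=0$. Since the Inversion Theorem \ref{Th4} makes $F_{\widehat X}\circ F_{X}=(-1)^{n}(-1)_{X}^{*}$ an automorphism, $F_{X}$ is injective and so $P(m_{X*})\alpha=0$. As $P$ has distinct roots, $m_{X*}$ is diagonalizable over $\Q$ on $L_{p}H_{k}(X)_{\Q}$ with eigenvalues in $\{m^{i}:i\in I\}$.

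The family $\{m_{X*}\}_{m\in\Z}$ pairwise commutes via $(mn)_{X*}=m_{X*}n_{X*}$, so admits a common eigenspace decomposition; a standard unique-factorization argument using two coprime values of $m$ then forces the joint eigenvalues on any common eigenspace to have the form $(m^{i})_{m\in\Z}$ for a single $i\in I$. Writing $s=i-k$ identifies this common eigenspace with $L_{p}H_{k}(X)_{\Q}^{s}$ and gives the stated decomposition. Finally, the range of $s$ emerges from the non-vanishing constraints $0\leq p+n-i\leq n$ and $k+2(n-i)\leq 2n$ on the bidegree of $\beta_{i}$: combined with $k\geq 2p$, these force $i\in[\max(p,[(k+1)/2]),p+n]$, hence $s\in[p-k,n-[(k+1)/2]]$. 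The main obstacle is the key identity $F_{X}\circ m_{X*}=(e^{m\mP})_{*}$: each ingredient (correspondence calculus of Lemmas \ref{lemma1} and \ref{lemma3}, the Picard identity for $\mP$, and the ring structure of $A_{*}(X\times\widehat X)_{\Q}$) is individually standard, but coordinating them at the correspondence level is where the real content lies. Everything subsequent is linear algebra plus the injectivity of $F_{X}$ supplied by Theorem \ref{Th4}.
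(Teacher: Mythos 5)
Your proposal proves the corollary by rebuilding the eigenspace decomposition of $L_pH_k(X)_{\Q}$ from scratch, via the identity $F_X\circ m_{X*}=(e^{m\mP})_*$ and a polynomial annihilating $m_{X*}$. This is a legitimate, self-contained route; by contrast, the paper's proof of Corollary~\ref{cor10} is a one-line deduction from Theorem~\ref{Th5} (whose proof is driven by Lemma~\ref{Lemma6} and Proposition~\ref{Prop7}, which encode essentially the same Fourier--Mukai computation you carry out). So you are in effect re-proving the theorem rather than citing it; the individual steps (correspondence calculus, $(m_X\times\id)^*\mP=m\mP$, injectivity of $F_X$ from the Inversion Theorem, semisimplicity of $m_{X*}$, and the coprime-$m$ unique-factorization argument) are all sound.

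There is, however, a gap in the final step that identifies the set $I$. You impose the constraint $0\leq p+n-i$ to conclude $i\leq p+n$, on the grounds that the Lawson index of $\beta_i$ must be nonnegative. But the paper extends Lawson homology to negative indices by declaring $L_qH_l(\widehat X)=L_0H_l(\widehat X)=H^{BM}_l(\widehat X)$ for $q<0$; indeed Lemma~\ref{Lemma6} explicitly sums $\beta_q$ from $q=p-[\tfrac{k}{2}]$ (which is negative when $p<[\tfrac{k}{2}]$) up to $q=n$, i.e.\ $i$ ranges up to $n+[\tfrac{k}{2}]\geq p+n$. Hence $\beta_i$ can be nonzero for $p+n<i\leq n+[\tfrac{k}{2}]$, and if such $i$ are omitted from $I$ the polynomial $P(T)=\prod_{i\in I}(T-m^i)$ no longer satisfies $\sum_i P(m^i)\beta_i=0$, so you cannot conclude $P(m_{X*})\alpha=0$. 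The correct upper constraint is the degree bound $k+2(n-i)\geq 0$ (i.e.\ the homology degree of $\beta_i$ must be nonnegative), which gives $i\leq n+[\tfrac{k}{2}]$ and hence $s=i-k\leq n-[\tfrac{k+1}{2}]$ --- exactly the corollary's upper bound. Similarly, your lower constraint $p+n-i\leq n$ (i.e.\ $i\geq p$) is correct since $L_qH_*(\widehat X)=0$ for $q>n=\dim\widehat X$, and together with $k+2(n-i)\leq 2n$ (i.e.\ $i\geq [\tfrac{k+1}{2}]\geq p$) it yields $s\geq p-k$. So the conclusion is right, but the justification of the bound $i\leq p+n$ must be replaced by the degree constraint $k+2(n-i)\geq 0$.
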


\begin{lemma}\label{Lemma6}
 Suppose $\alpha\in L_pH_k(X)_{\Q}$ and
 $$F(\alpha)=\sum_{q=p-[\frac{k}{2}]}^n\beta_q$$
 with $\beta_q\in L_qH_l(\widehat{X})_{\Q}$, where $l=k+2(q-p)$. Then
for all $m\in \Z$, we have
$$
m^*_{\widehat{X}}\beta_q=m^{n-q+p}\beta_q.
$$
\end{lemma}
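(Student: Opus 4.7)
The approach is to compute $m_{\widehat{X}}^*$ termwise on the explicit expansion $F(\alpha)=\sum_{i\geq 0}\frac{1}{i!}(\mP^i)_*(\alpha)$, and to match each summand to the appropriate $\beta_q$ by a bidegree count.

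First, I observe that $\mP^i\in\Ch^i(X\times\widehat{X})$ is a correspondence of relative degree $n-i$, so $(\mP^i)_*$ sends $L_pH_k(X)_{\Q}$ into $L_{p+n-i}H_{k+2(n-i)}(\widehat{X})_{\Q}$. Setting $q=p+n-i$, this summand lies in precisely the $(q,l)$-component appearing in the statement, with $l=k+2(q-p)$. Since distinct indices $i$ correspond to distinct bidegrees on $\widehat{X}$, the decomposition of $F(\alpha)$ in the hypothesis must agree term by term with the exponential expansion, so
$$\beta_q=\frac{1}{(n+p-q)!}(\mP^{n+p-q})_*(\alpha).$$

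The heart of the argument is then to show that each $(\mP^i)_*(\alpha)$ is an eigenvector of $m_{\widehat{X}}^*$ with eigenvalue $m^i$. I would use two inputs. The square
$$\xymatrix{X\times\widehat{X}\ar[r]^{\id_X\times m_{\widehat{X}}}\ar[d]_{p_2}&X\times\widehat{X}\ar[d]^{p_2}\\ \widehat{X}\ar[r]^{m_{\widehat{X}}}&\widehat{X}}$$
is Cartesian, so Peters' base change formula (already invoked in the proof of Lemma \ref{lemma3}) yields $m_{\widehat{X}}^*\, p_{2*}=p_{2*}(\id_X\times m_{\widehat{X}})^*$ on Lawson homology. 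The universal property of the Poincar\'e bundle gives $(\id_X\times m_{\widehat{X}})^*\mP=m\cdot\mP$ in $\Pic(X\times\widehat{X})$, hence in $\Ch^1$ and a fortiori in $A^1$, from which $(\id_X\times m_{\widehat{X}})^*\mP^i=m^i\mP^i$ follows since pullback is a ring homomorphism. Combining these with the obvious identity $(\id_X\times m_{\widehat{X}})^* p_1^*\alpha=p_1^*\alpha$ (as $p_1\circ(\id_X\times m_{\widehat{X}})=p_1$) gives
$$m_{\widehat{X}}^*(\mP^i)_*(\alpha)=p_{2*}\bigl(m^i\mP^i\cdot p_1^*\alpha\bigr)=m^i(\mP^i)_*(\alpha).$$

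Specialising to $i=n+p-q$ yields $m_{\widehat{X}}^*\beta_q=m^{n-q+p}\beta_q$, as claimed. The subtlety I expect to require the most care is verifying that the base change identity and the pullback formula for $\mP^i$ transport cleanly to Lawson homology; however, since the action of any correspondence on $L_*H_*(-)_{\Q}$ factors through $A_*(X\times\widehat{X})_{\Q}$ (as exploited in the proof of Theorem \ref{Th4}), and the base change formula is available at the Lawson level, these reductions should go through without incident.
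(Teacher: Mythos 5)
Your proof is correct and follows essentially the same route as the paper: identify $\beta_q=\frac{1}{(n+p-q)!}p_{2*}(\mP^{n+p-q}\cdot p_1^*\alpha)$ by matching bidegrees in the expansion of $e^{\mP}$, then apply base change for $m_{\widehat{X}}\circ p_2=p_2\circ(\id_X\times m_{\widehat{X}})$ together with $(\id_X\times m_{\widehat{X}})^*\mP=m\mP$. The only difference is expository: you spell out the bidegree bookkeeping and the Cartesian square that the paper leaves implicit.
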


\begin{proof}
 By the definition of $F$ we have
$$
\beta_q=\frac{1}{(n-q+p)!}p_{2*}(\mP^{(n-q+p)}\cdot p_1^*\alpha)\in L_qH_l(\widehat{X})_{\Q}.
$$
Hence using flat base change with $m_{\widehat{X}}\circ p_2=p_2\circ(1_X\times m_{\widehat{X}})$ (cf. \cite[\S3]{Friedlander-Gabber}) and the fact that $(1_X\times m_{\widehat{X}})^*  \mP=m\mP$, we get
$$
\begin{array}{lcl}
m_{\widehat{X}}^*\beta_q
&=&\frac{1}{(n-q+p)!}m_{\widehat{X}}^* p_{2*}(\mP^{(n-q+p)}\cdot p_1^*\alpha)\\
&=& \frac{1}{(n-q+p)!}p_{2*}((1_X\times m_{\widehat{X}})^*\mP^{(n-q+p)}\cdot p_1^*\alpha)\\
&=& \frac{m^{n-q+p}}{(n-q+p)!} p_{2*}(\mP^{(n-q+p)}\cdot p_1^*\alpha)\\
&=& m^{n-q+p}\beta_q.
\end{array}
$$

\end{proof}

\begin{proposition}\label{Prop7}
 For $\alpha\in L_pH_k(X)_{\Q}$ and $m\in \Z-\{-1,0,1\}$ the following statements are equivalent:
\begin{enumerate}
 \item $\alpha\in L_pH_k(X)_{\Q}^s$,

\item $m_{X*}\alpha=m^{k+s}\alpha$,
\item $m_X^*\alpha=m^{2n-k-s}\alpha$,
\item $F(\alpha)\in L_{n-k+p-s}H_{2n-2s-k}(\widehat{X})_{\Q}$,
\item $F(\alpha)\in L_{n-k+p-s}H_{2n-2s-k}(\widehat{X})_{\Q}^s$.
\end{enumerate}
\end{proposition}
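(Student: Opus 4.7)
The plan is to establish all five equivalences using the Fourier--Mukai machinery (Proposition \ref{prop5}), the Inversion Theorem \ref{Th4}, the eigenvalue computation in Lemma \ref{Lemma6}, and the decomposition Theorem \ref{Th5}. The assumption $m \in \Z\setminus\{-1,0,1\}$ enters repeatedly: it guarantees that distinct integer powers of $m$ are distinct, so a single eigenvalue equation will control an entire grading.

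I would first handle (1) $\Leftrightarrow$ (2), which is the only place where Theorem \ref{Th5} is essential. Clearly (1) $\Rightarrow$ (2) by specialization. Conversely, write $\alpha = \sum_{s'} \alpha_{s'}$ using Theorem \ref{Th5}, with the sum running over the finite index set from (\ref{eq04}). Then $m_{X*}\alpha = \sum_{s'} m^{k+s'}\alpha_{s'} = m^{k+s}\alpha$ forces $(m^{s'}-m^{s})\alpha_{s'}=0$ for every $s'$; since $|m|\geq 2$ makes these factors nonzero for $s'\neq s$, only $\alpha_s$ survives, which gives (1) for all integers at once.

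Next, for (2) $\Leftrightarrow$ (4), apply $F_X$ to the equation in (2) and use Proposition \ref{prop5}(1) with $f=m_X$ (so that $\hat{f}=m_{\widehat{X}}$) to obtain $m_{\widehat{X}}^* F(\alpha) = m^{k+s}F(\alpha)$. Decompose $F(\alpha)=\sum_q\beta_q$ as in Lemma \ref{Lemma6}; the lemma gives $m_{\widehat{X}}^*\beta_q = m^{n-q+p}\beta_q$, and the same distinct-powers argument forces $\beta_q=0$ unless $n-q+p=k+s$, i.e.\ $q=n-k+p-s$, which is exactly (4). The reverse direction reads off from the same computation together with the injectivity of $F_X$ supplied by Theorem \ref{Th4}. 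For (2) $\Leftrightarrow$ (3), apply $m_{\widehat{X}*}$ to $m_{\widehat{X}}^* F(\alpha) = m^{k+s}F(\alpha)$; the push--pull identity $m_{\widehat{X}*}m_{\widehat{X}}^*=m^{2n}\,\id$ (which holds because $[m]_{\widehat{X}}$ is an isogeny of degree $m^{2n}$) yields $m_{\widehat{X}*}F(\alpha)=m^{2n-k-s}F(\alpha)$, and Proposition \ref{prop5}(2) combined with the injectivity of $F_X$ translates this back into $m_X^*\alpha=m^{2n-k-s}\alpha$. Finally (4) $\Leftrightarrow$ (5): once $F(\alpha)\in L_{n-k+p-s}H_{2n-2s-k}(\widehat{X})_{\Q}$, Lemma \ref{Lemma6} gives $m_{\widehat{X}}^*F(\alpha)=m^{k+s}F(\alpha)$ and applying $m_{\widehat{X}*}$ as above produces $m_{\widehat{X}*}F(\alpha)=m^{2n-k-s}F(\alpha)=m^{l+s}F(\alpha)$ with $l=2n-2s-k$, placing $F(\alpha)$ in the prescribed $L^s$-eigenspace; the reverse implication is trivial.

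The argument is largely a matter of careful index bookkeeping once the four cited tools are in place. The only substantive new ingredient is the push--pull identity $m_{\widehat{X}*}m_{\widehat{X}}^*=m^{2n}\,\id$ on Lawson homology, which is the standard degree-times-identity formula for the isogeny $[m]_{\widehat{X}}$. The main pitfall to guard against is the consistent matching of indices between the decomposition of $\alpha\in L_pH_k(X)_{\Q}$ (Theorem \ref{Th5}) and that of $F(\alpha)$ on $\widehat{X}$ (Lemma \ref{Lemma6}); a useful sanity check is that $q=n-k+p-s$ produces $l=k+2(q-p)=2n-2s-k$, in agreement with the dimensions appearing in (4) and (5).
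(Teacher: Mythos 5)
Your decomposition/Lemma 6/Fourier--Mukai machinery for the equivalences $(2)\Leftrightarrow(3)\Leftrightarrow(4)\Leftrightarrow(5)$ is sound and is essentially the paper's route (your derivation of $(2)\Rightarrow(3)$ through $F_X$ is in fact cleaner than the paper's direct push--pull, which silently assumes $m_X^*\alpha$ lies in a single eigenspace). However, there is a genuine logical gap: you invoke Theorem~\ref{Th5} to prove $(2)\Rightarrow(1)$, but Theorem~\ref{Th5} is proved in the paper \emph{after} Proposition~\ref{Prop7}, and its proof applies Proposition~\ref{Prop7} to each component $\beta_q$. Using Theorem~\ref{Th5} here is therefore circular. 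Your remark that ``(1)~$\Leftrightarrow$~(2) is the only place where Theorem~\ref{Th5} is essential'' is exactly the wrong conclusion: the proposition must be proved without the decomposition theorem, and indeed it can be.

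The fix is already latent in your own chain. From $(2)$ for a single fixed $m$ with $|m|\ge 2$ you have derived $(4)$, i.e.\ $F(\alpha)$ is the single homogeneous component $\beta_{n-k+p-s}$. Lemma~\ref{Lemma6} is stated for \emph{every} integer $m'$, so it gives $m'^{*}_{\widehat X}F(\alpha)=m'^{\,k+s}F(\alpha)$ for all $m'\in\Z$, not just the chosen one. Combining with Proposition~\ref{prop5}(1) (namely $F_X(m'_{X*}\alpha)=m'^{*}_{\widehat X}F_X(\alpha)$) and the injectivity of $F_X$ from Theorem~\ref{Th4}, one gets $m'_{X*}\alpha=m'^{\,k+s}\alpha$ for all $m'\in\Z$, which is precisely~(1). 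This closes the cycle $(2)\Rightarrow(4)\Rightarrow(5)\Rightarrow(1)\Rightarrow(2)$ without any appeal to Theorem~\ref{Th5}, and is the mechanism the paper's argument $(5)\Rightarrow(3)$ relies on. Replace the Theorem~\ref{Th5}-based paragraph by this observation and your proof is correct.
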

\begin{proof} We show this in the following way: $(1)\Leftrightarrow (2)\Leftrightarrow (3)\Rightarrow (4)\Rightarrow (5)\Rightarrow (3)$.
 \item {\bf $(1)\Leftrightarrow (2)$} is from the definition of $L_pH_k(X)_{\Q}^s$.

\item  $(2)\Rightarrow (3)$. Note first that $m^*_X\alpha\in L_pH_k(X)_{\Q}$. Suppose $m^*_X\alpha\in L_pH_k(X)^{s'}_{\Q}$
 and from the definition we get $m_{X*}( m_X^*\alpha)=m^{k+s'}m_X^*\alpha$. Since $m_{X*}( m_X^*\alpha)=(\deg m_X)\alpha=m^{2n}\alpha$,
  we obtain that $ m_X^*\alpha=m^{2n-k-s'}\alpha\in L_pH_k(X)_{\Q}^s$. This implies that $s=s'$.

\item $(3)\Rightarrow (2)$. From $m^{2n}\alpha=\deg(m_X) \alpha=m_{X*}( m_X^*\alpha)=m_{X*}( m^{2n-k-s}\alpha )=m^{2n-k-s}m_{X*}\alpha$ we get $m_{X*}\alpha=m^{k+s}\alpha$.

\item $(3)\Rightarrow (4)$. We  write $F_X(\alpha)=\sum_q\beta_q$ with  $\beta_q\in L_qH_{k+2(q-p)}(\widehat{X})_{\Q}$. Then
$$
\begin{array}{ccl}
\sum_{q=p-[\frac{k}{2}]}^n\beta_q&=&F_X(\alpha)\\
&=&\frac{1}{m^{k+s}}F(m_{X*}\alpha)\\
&=&\frac{1}{m^{k+s}}m^*_{\widehat{X}}F(\alpha)\\
&=&\frac{1}{m^{k+s}}\sum_{q=0}^n m^*_{\widehat{X}}\beta_q\\
&=&\frac{1}{m^{k+s}}\sum_{q=0}^n m^{n-q+p}\beta_q\\
&=& m^{n-k-q+p-s}\beta_q.
\end{array}
$$
Comparing coefficients this implies that $$F(\alpha)=\beta_{n-k+p-s}\in L_{n-k+p-s}H_{2n-k-2s}(\widehat{X})_{\Q}.$$

\item $(4)\Rightarrow (5)$. By Lemma \ref{Lemma6}, we have $F(\alpha)\in L_{n-k+p-s}H_{2n-2s-k}(\widehat{X})_{\Q}^s$.

\item $(5)\Rightarrow (3)$. For every $m\in\Z$, we have
$$
\begin{array}{cclr}
 m_X^*\alpha&=&m_X^*(-1)^n(-1)^*_X F_{\widehat{X}}F_{X}\alpha&\hbox{(by Theorem \ref{Th4})}\\
&=&(-1)^n(-1)^*_X F_{\widehat{X}} m_{\widehat{X}*}F_{X}\alpha& \hbox{(by Proposition \ref{Prop4})}\\
&=&(-1)^n(-1)^*_X F_{\widehat{X}} m_{\widehat{X}*} \beta_{n-k+p-s}&\hbox{(by Statement (5))}\\
&=&m^{-k-s}(-1)^n(-1)^*_X F_{\widehat{X}} m_{\widehat{X}*}m_{\widehat{X}}^*\beta_{n-k+p-s}& \hbox{(by Lemma \ref{Lemma6})}\\
&=&m^{2n-k-s}(-1)^n(-1)^*_X F_{\widehat{X}}  \beta_{n-k+p-s}&\\
&=&m^{2n-k-s}(-1)^n(-1)^*_X F_{\widehat{X}} F_X(\alpha) &\\
&=&m^{2n-k-s}\alpha.&
\end{array}
$$

\end{proof}

\begin{proof}[Proof of Theorem \ref{Th5}]
Suppose $\alpha\in L_pH_k(X)_{\Q}$ and write $F_X(\alpha)=\sum\beta_q$ with  $\beta_q\in L_qH_{k+2(q-p)}(\widehat{X})_{\Q}$.
 By Lemma \ref{Lemma6} we have $\beta_q\in L_qH_{k+2(q-p)}(\widehat{X})_{\Q}^{n-k-q+p}$.  By applying Proposition \ref{Prop7} to
  $\beta_q$, we get $$F_{\widehat{X}}(\beta_q)\in  L_pH_k(X)_{\Q}^{n-k-q+p}.$$

Now by Theorem \ref{Th4},
$$
\begin{array}{ccl}
\alpha&=&(-1)^n(-1)^*F_{\widehat{X}}\circ F_X(\alpha)\\
&=&(-1)^n(-1)^*\sum^n_{q=p-[\frac{k}{2}]}\beta_q\in \bigoplus^n_{q=p-[\frac{k}{2}]} L_pH_{k}(X)^{n-k-q+p}_{\Q}.
\end{array}
$$
This implies  the assertion since $n-k+[\frac{k}{2}]=n-[\frac{k+1}{2}]$.
\end{proof}

The decomposition of Equation (\ref{eq04}) is compatible with many natural maps.
\begin{proposition} Let $f:X\to Y$ be a group homomorphism between abelian varieties. Then induced map $f_*$ preserves
the decomposition of Lawson homology groups with rational coefficients, i.e.,
$$
f_*(L_pH_k(X)_{\Q}^s)\subseteq L_pH_k(Y)_{\Q}^s, ~\forall s\in\Z.
$$
\end{proposition}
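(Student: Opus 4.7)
My plan rests on the simple observation that a group homomorphism $f:X\to Y$ commutes with the multiplication-by-$m$ maps on each side: for every integer $m$, we have the identity of morphisms
$$
f\circ m_X = m_Y\circ f,
$$
since $f$ respects the group law and hence sends $mx = x+\cdots+x$ to $mf(x)$. This is the only geometric ingredient I will use beyond functoriality.

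Next I would invoke the functoriality of pushforward on Lawson homology. Pushforward along a projective morphism of smooth projective varieties is a covariant functor on $L_*H_*(-)_{\Q}$ (this is already being used implicitly throughout the paper; it corresponds to the graph correspondence, as in Lemma \ref{lemma2}(1), together with Lemma \ref{lemma1} for composition). Applying pushforward to the identity above gives
$$
f_*\circ (m_X)_* = (m_Y)_*\circ f_*
$$
as homomorphisms $L_pH_k(X)_{\Q}\to L_pH_k(Y)_{\Q}$.

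With this commutation in hand, the proof is immediate. Take $\alpha\in L_pH_k(X)_{\Q}^{s}$, so $m_{X*}\alpha = m^{k+s}\alpha$ for every $m\in \Z$. Then
$$
m_{Y*}(f_*\alpha) = f_*(m_{X*}\alpha) = f_*(m^{k+s}\alpha) = m^{k+s}\,f_*\alpha,
$$
which by the definition of the eigenspace means $f_*\alpha\in L_pH_k(Y)_{\Q}^{s}$.

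There is no real obstacle here; the only thing to be careful about is that the statement uses covariance for a group homomorphism between possibly different abelian varieties (not an endomorphism), so the two multiplication maps $m_X$ and $m_Y$ live on different spaces and the equality $f\circ m_X = m_Y\circ f$ must be applied as a relation between two morphisms $X\to Y$ before one can pushforward. Once the commutation is set up correctly, the eigenvalue condition transports mechanically.
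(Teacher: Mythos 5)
Your argument is exactly the paper's: the identity $m_Y\circ f = f\circ m_X$ for a group homomorphism, functoriality of pushforward to get $m_{Y*}\circ f_* = f_*\circ m_{X*}$, and then transporting the eigenvalue condition. Your write-up is simply a more explicit version of the paper's two-line proof; the approach and key observation are identical.
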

\begin{proof}
Since $f:X\to Y$ is a group homomorphism, one has $f(m\cdot x)=m\cdot f(x)$ and so $m_Y\circ f= f\circ m_X$. Hence
we have $m_{Y*}\circ f_*=f_*\circ m_{X*}$. This completes the proof of the proposition.
\end{proof}

\begin{proposition} \label{prop12}
The decomposition in Equation (\ref{eq04}) is  compatible with the cycle class map $\Phi_{p,k,\Q}$.
\end{proposition}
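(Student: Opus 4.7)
The plan is to combine the naturality of the Friedlander--Mazur cycle class map under pushforward by $m_X$ with the computation at the beginning of Section \ref{sec6} that $m_{X*}$ acts on $H_k(X,\Q)$ by multiplication by $m^k$.

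First I would invoke naturality: since $\Phi_{p,k}$ is a natural transformation from Lawson homology to singular homology and each $m_X:X\to X$ is a morphism of projective varieties, the square
$$
\xymatrix{L_pH_k(X)_{\Q} \ar[r]^{\Phi_{p,k,\Q}} \ar[d]_{m_{X*}}& H_k(X,\Q) \ar[d]^{m_{X*}}\\
L_pH_k(X)_{\Q} \ar[r]_{\Phi_{p,k,\Q}}& H_k(X,\Q)}
$$
commutes for every $m\in\Z$. Next, for $\alpha\in L_pH_k(X)_{\Q}^s$, applying $\Phi_{p,k,\Q}$ to the defining relation $m_{X*}\alpha=m^{k+s}\alpha$ and using that $m_{X*}$ acts on $H_k(X,\Q)$ as multiplication by $m^k$, I would obtain
$$m^{k+s}\,\Phi_{p,k,\Q}(\alpha)=m^k\,\Phi_{p,k,\Q}(\alpha)\quad\text{for every }m\in\Z.$$
When $s\neq 0$, picking any $m$ with $m^s\neq 1$ (for example $m=2$) forces $\Phi_{p,k,\Q}(\alpha)=0$.

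Consequently $\Phi_{p,k,\Q}$ annihilates $L_pH_k(X)_{\Q}^s$ for every $s\neq 0$ and restricts to a map $L_pH_k(X)_{\Q}^0\to H_k(X,\Q)$. Since the same eigenvalue computation shows that $H_k(X,\Q)$ is itself concentrated in the $s=0$ eigenspace of $m_{X*}$, this is exactly the compatibility of $\Phi_{p,k,\Q}$ with the decomposition in Equation (\ref{eq04}). The only nontrivial input is the naturality of the Friedlander--Mazur cycle class map under proper pushforward, which is built into its construction as a natural transformation; once that is acknowledged the argument reduces to a one-line eigenvalue comparison, so no serious obstacle is expected.
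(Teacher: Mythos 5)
Your proof is correct and takes essentially the same approach as the paper: both hinge on the naturality of the Friedlander--Mazur cycle class map $\Phi_{p,k,\Q}$ with respect to $m_{X*}$, combined with the fact that $m_{X*}$ acts on $H_k(X,\Q)$ by $m^k$, so that the $s\neq 0$ eigenspaces are killed. You spell out the eigenvalue comparison more explicitly than the paper (which also, somewhat redundantly, cites commutation with the Fourier--Mukai transform), but the underlying mechanism is identical.
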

\begin{proof}
It follows from the fact that the multiplication $m_X$ by $m$ on $X$ commutes with the Fourier-Mukai transform $F_X$ (cf. Proposition \ref{prop5})
and the cycle class map $\Phi_{p,k,\Q}$, the latter is the general fact that the cycle class map $\Phi_{p,k,\Q}$ is a natural transformation between
the Lawson homolgy (cf. \cite{Friedlander-Mazur}, \cite[Ch. IV]{Lawson2}).
\end{proof}

Set
$$
L_pH_{k}(X)_{hom,\Q}^s:=
\{ \alpha\in L_pH_k(X)_{hom,\Q}| m_{X*}\alpha=m^{k+s}\alpha, \forall ~m \in\Z\}.
$$

From Proposition \ref{prop12}, we have
\begin{equation}
L_pH_k(X)_{hom,\Q}^s=
L_pH_k(X)_{\Q}^s, \quad s\neq0
\end{equation}
and
\begin{equation}
L_pH_k(X)_{hom,\Q}^0=L_pH_k(X)_{\Q}^0\cap L_pH_k(X)_{hom,\Q}.
\end{equation}

On the image of the natural transform $\Phi_{p,k}\otimes{\Q}:L_pH_k(X)_{\Q}\to H_k(X)_{\Q}$, we have the following result.
\begin{corollary}\label{cor6.7}
Let $X$ be an abelian variety of dimension $n$. Then we have
$$
T_pH_k(X)_{\Q}\cong T_{n+p-k}H_{2n-k}(\widehat{X})_{\Q}
$$
\end{corollary}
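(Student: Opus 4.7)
The plan is to show that the entire image $T_pH_k(X)_{\Q}$ already arises from the single eigenspace $L_pH_k(X)_{\Q}^0$ under the cycle class map, and then to transport this description via the Fourier--Mukai isomorphism of eigenspaces to $\widehat{X}$.

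First I would show that $\Phi_{p,k,\Q}$ vanishes on $L_pH_k(X)_{\Q}^s$ whenever $s\neq 0$. The argument is a short eigenvalue-matching: for $\alpha\in L_pH_k(X)_{\Q}^s$ one has $m_{X*}\alpha=m^{k+s}\alpha$, while $m_{X*}$ acts on $H_k(X,\Q)$ as multiplication by $m^k$; naturality of $\Phi$ forces
$$(m^{k+s}-m^k)\,\Phi_{p,k,\Q}(\alpha)=0$$
for every $m\in\Z$, and choosing $m$ with $m^s\neq 1$ kills $\Phi_{p,k,\Q}(\alpha)$. Combined with the direct sum decomposition of Theorem \ref{Th5}, this identifies $T_pH_k(X)_{\Q}$ with the image of $\Phi_{p,k,\Q}$ restricted to $L_pH_k(X)_{\Q}^0$, and analogously identifies $T_{n+p-k}H_{2n-k}(\widehat{X})_{\Q}$ with the image on $L_{n+p-k}H_{2n-k}(\widehat{X})_{\Q}^0$.

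Next I would invoke Proposition \ref{Prop7} (the equivalence $(1)\Leftrightarrow(5)$ with $s=0$) to conclude that $F_X$ restricts to a map $L_pH_k(X)_{\Q}^0\to L_{n+p-k}H_{2n-k}(\widehat{X})_{\Q}^0$; the Inversion Theorem \ref{Th4} makes this restriction an isomorphism, since $F_{\widehat{X}}\circ F_X=(-1)^d(-1)^*_X$ is an automorphism of $L_pH_k(X)_{\Q}$ that preserves each eigenspace (because $(-1)_X$ commutes with every $m_X$). Finally, the compatibility of $F_X$ with $\Phi$ provided by Proposition \ref{prop6}, read off in the two relevant bidegrees, descends this isomorphism of $s=0$ eigenspaces to the claimed isomorphism of images $T_pH_k(X)_{\Q}\cong T_{n+p-k}H_{2n-k}(\widehat{X})_{\Q}$.

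The step that will require the most care is this last descent. On singular homology, $F_X$ applied to a class in $H_k(X,\Q)$ a priori spreads across several bidegrees of $H_*(\widehat{X},\Q)$, so Proposition \ref{prop6} has to be interpreted component-by-component. The key observation that saves us is that once the source is restricted to $L_pH_k(X)_{\Q}^0$, the Lawson-level image lands entirely in the single bidegree $L_{n+p-k}H_{2n-k}(\widehat{X})_{\Q}^0$, so applying $\Phi$ picks out exactly the $H_{2n-k}(\widehat{X},\Q)$ component of $F_X$ on homology, and the relevant square commutes as needed.
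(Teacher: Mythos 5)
Your proposal is correct and uses the same ingredients as the paper's proof: the commutative square comparing $F_X$ on $L_pH_k(X)_{\Q}^0\to L_{n+p-k}H_{2n-k}(\widehat{X})_{\Q}^0$ with $F_X$ on singular homology, the identification of $T_pH_k(X)_{\Q}$ with the image of the $s=0$ eigenspace under $\Phi_{p,k,\Q}$, and the Fourier inversion theorem. The only difference is how you close the argument: you invoke Corollary \ref{Cor14} to know the top horizontal arrow is an isomorphism, from which surjectivity of the restricted map on images is immediate, and then (implicitly) injectivity from the fact that $F_X$ on $H_*(X,\Q)$ is injective; the paper instead extracts only injectivity of $\bar{F}_X$ from the singular-homology side and then runs the symmetric argument with $F_{\widehat{X}}$ to match dimensions. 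Both are valid; the paper's version is marginally leaner in that it never needs the top arrow to be surjective. One small thing worth making explicit in your write-up: the ``descent'' of the isomorphism in your final step requires you to name the injectivity of $F_X$ on $H_k(X,\Q)$ (it does not follow formally from the commutativity alone); you have this available from the Inversion Theorem, so just say so.
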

\begin{proof}
Note that we have the following commutative diagram
$$
\xymatrix{L_pH_k(X)_{\Q}^0\ar[r]^-{F_X}\ar[d]^{\Phi_{p,k}\otimes{\Q}}&L_{n+p-k}H_{2n-k}(\widehat{X})_{\Q}^0\ar[d]^{\Phi_{n+p-k,2n-k}\otimes{\Q}}\\
T_pH_k(X)_{\Q}\ar[r]^-{\bar{F}_X}&T_{n+p-k}H_{2n-k}(\widehat{X})_{\Q},
}
$$
where $\bar{F}_X$ is the restriction of the Fourier-Mukai transform $F_X$ on the rational homology groups of $X$. 
Since $F_X:H_k(X,\Q)\to H_k(\widehat{X},\Q)$ is isomorphism, $\bar{F}_X:T_pH_k(X)_{\Q}\to T_{n+p-k}H_{2n-k}(\widehat{X})_{\Q}$ is injective and so
$$\dim_{\Q}T_pH_k(X)_{\Q}\leq \dim_{\Q}T_{n+p-k}H_{2n-k}(\widehat{X})_{\Q}.$$ 
Since $F_{\widehat{X}}$ is also an isomorphism on the rational homology groups, we obtain
$$\dim_{\Q}T_{n+p-k}H_{2n-k}(\widehat{X})_{\Q}\leq \dim_{\Q}T_pH_k(X)_{\Q}.$$ 
Hence $$\dim_{\Q}T_pH_k(X)_{\Q}= \dim_{\Q}T_{n+p-k}H_{2n-k}(\widehat{X})_{\Q}$$
and so $\bar{F}_{X}:T_pH_k(X)_{\Q}\to T_{n+p-k}H_{2n-k}(\widehat{X})_{\Q}$ is an isomorphism. 
\end{proof}

The motivation of our decomposition follows from that of the Chow group theory.
Recall that  there is also an eigenspace decomposition for $\Ch_p(X)$ of for every $p$, due to Beauville \cite{Beauville2}. If we set
$$
\Ch_p(X)_{\Q}^s:=\{\alpha\in \Ch_p(X)_{\Q}|m_{X*}\alpha=m^{2p+s}\alpha ~\hbox{for all  $n\in \Z$} \},
$$
then there is a direct sum decomposition for the Chow group of $X$ with rational coefficients
$$
\Ch_p(X)_{\Q}=\bigoplus_{s=-p}^{n-p}\Ch_p(X)_{\Q}^s.
$$

Beauville conjectures that $\Ch_p(X)_{\Q}^s=0$ for $s<0$ (\cite{Beauville1,Beauville2}).
Similarly, we have the following analogue of  Beauville's  conjecture for the Lawson homology of abelian varieties.
\begin{conjecture}\label{conj}
For an abelian variety $X$, one has
$L_pH_k(X)_{\Q}^s=0$ for $s<0$.
\end{conjecture}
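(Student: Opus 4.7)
The plan is to give a \emph{conditional} proof, reducing Conjecture~\ref{conj} to two better-studied open problems: Beauville's original conjecture on Chow groups of abelian varieties, and the (weak) Suslin conjecture for Lawson homology. An unconditional proof is out of reach, since the introduction records that Conjecture~\ref{conj} is in fact equivalent to the latter.

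Step 1 (the corner $k=2p$). By Friedlander's isomorphism $L_pH_{2p}(X)\cong A_p(X)$ and the convention $L_pH_{2p}(X)_{\Q}^s=\{\alpha:m_{X*}\alpha=m^{2p+s}\alpha\}$, the case $k=2p$ concerns exactly the Beauville-type eigenspaces of $A_p(X)_{\Q}$. Since $m_{X*}$ descends along the quotient $\Ch_p(X)_{\Q}\twoheadrightarrow A_p(X)_{\Q}$, the Beauville decomposition surjects component-by-component onto the Lawson one; thus Beauville's conjecture ($\Ch_p(X)_{\Q}^s=0$ for $s<0$) forces $L_pH_{2p}(X)_{\Q}^s=0$ for $s<0$.

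Step 2 (Fourier--Mukai dualisation). For $\alpha\in L_pH_k(X)_{\Q}^s$ with $s<0$, Proposition~\ref{Prop7}(4)--(5) places $F_X(\alpha)$ in $L_{n-k+p-s}H_{2n-2s-k}(\widehat{X})_{\Q}^s$. A direct check shows $2n-2s-k=2(n-k+p-s)$ iff $k=2p$, so Fourier--Mukai on its own merely dualises the Chow corner $k=2p$ between $X$ and $\widehat{X}$ and does not reach $k>2p$. Combined with the Inversion Theorem~\ref{Th4}, it does at least show the conjecture for $X$ in a given bidegree is equivalent to the conjecture for $\widehat{X}$ in the dual bidegree, which can simplify bookkeeping.

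Step 3 (bootstrapping to $k>2p$ and the main obstacle). For $k>2p$ the natural attempt is to propagate vanishing via the $s$-map $s:L_pH_k(X)_{\Q}\to L_{p-1}H_k(X)_{\Q}$, which by naturality commutes with $m_X$ and so preserves the eigenspace index. The main obstacle, flagged by the author in the introduction, is exactly that the required vanishing along this tower \emph{is} the (weak) Suslin conjecture for Lawson homology; the Fourier--Mukai package developed in the paper produces no further lever to cross this gap. Consequently, the realistic deliverable is the conditional implication ``Beauville's Chow conjecture plus the weak Suslin conjecture $\Rightarrow$ Conjecture~\ref{conj}'', together with the unconditional instances covered by the low-dimensional cases of Beauville (e.g.\ $\dim X\leq 4$) in the corner $k=2p$.
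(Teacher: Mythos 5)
The statement you are asked to prove is a \emph{conjecture}; the paper does not prove it and you are right to treat it conditionally. What the paper does do with this statement is Proposition~\ref{th19}, which shows that Conjecture~\ref{conj} is \emph{equivalent} to the weak Suslin conjecture for Lawson homology with rational coefficients on abelian varieties, and the Example immediately following it, which verifies the conjecture for $\dim X\le 3$ outside the bidegrees $p=1$, $k\ge 4$, $s=3-k$.

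Your Steps 1--2 are sound. In Step~1 the surjection $\Ch_p(X)_{\Q}\twoheadrightarrow A_p(X)_{\Q}=L_pH_{2p}(X)_{\Q}$ is $m_{X*}$-equivariant, the two eigenspace conventions agree at $k=2p$, and a surjective map between direct sums of eigenspaces must surject componentwise; so Beauville's Chow conjecture indeed forces $L_pH_{2p}(X)_{\Q}^s=0$ for $s<0$. Step~2 is a correct bookkeeping observation: $2n-2s-k=2(n-k+p-s)$ iff $k=2p$, so Fourier--Mukai by itself only shuffles the Chow corner.

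The weakness is in your final deliverable. You announce the implication ``Beauville's Chow conjecture plus the weak Suslin conjecture $\Rightarrow$ Conjecture~\ref{conj},'' but by Proposition~\ref{th19} the weak Suslin conjecture on abelian varieties is already \emph{equivalent} to Conjecture~\ref{conj}; adding Beauville's Chow hypothesis buys nothing, so the conditional statement you end up with is strictly weaker than what the paper establishes. If your goal was to reproduce the paper's insight, you should instead try to argue both directions of the equivalence: from Suslin get $L_pH_k(X)_{\Q}^s=0$ for $s<0$ via Theorem~\ref{Th5}, Corollary~\ref{Cor14}, and Equation~(\ref{eq09}), and conversely recover Suslin from the vanishing of negative eigenspaces by the same chain. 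A secondary caution: the claim that low-dimensional Beauville (``$\dim X\le 4$'') gives unconditional instances at $k=2p$ is asserted without justification; the paper itself only secures $\dim X\le 2$ unconditionally (via Dold--Thom and Friedlander's $L_pH_{2p}\cong A_p$), and $\dim X=3$ only partially, so you should not claim more than that without a citation.
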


From Theorem \ref{Th4} and Proposition \ref{Prop7}, we have
\begin{corollary}\label{Cor14}
Let $X$ be an abelian variety of dimension $n$. The Fourier-Mukai transformation $F_X$ induces an isomorphism
$$L_pH_k(X)_{\Q}^{s}\cong L_{n-k+p-s}H_{2n-2s-k}(\widehat{X})_{\Q}^{s}
$$
for all integer $s$.
\end{corollary}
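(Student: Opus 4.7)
The plan is to read off the statement directly from the equivalence (1)$\Leftrightarrow$(5) in Proposition \ref{Prop7} together with the Inversion Theorem \ref{Th4}, and to check that the bookkeeping of indices is self-consistent when we apply Proposition \ref{Prop7} with the roles of $X$ and $\widehat{X}$ exchanged. The only real content is a symmetry check; the hard work has already been done in establishing that the five conditions in Proposition \ref{Prop7} are equivalent.

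First I would observe that the equivalence (1)$\Leftrightarrow$(5) of Proposition \ref{Prop7} gives exactly the containment
\[
F_X\bigl(L_pH_k(X)_{\Q}^{s}\bigr)\subseteq L_{n-k+p-s}H_{2n-2s-k}(\widehat{X})_{\Q}^{s}.
\]
Next I would apply the same proposition with $(X,p,k)$ replaced by $(\widehat{X},\,n-k+p-s,\,2n-2s-k)$, using that $\widehat{X}$ is again an abelian variety of dimension $n$ and that $\widehat{\widehat{X}}=X$. A direct index computation
\[
n-(2n-2s-k)+(n-k+p-s)-s=p,\qquad 2n-2s-(2n-2s-k)=k
\]
shows that the target of $F_{\widehat{X}}$ on the eigenspace of index $s$ lands back in $L_pH_k(X)_{\Q}^{s}$, so we obtain the reverse containment
\[
F_{\widehat{X}}\bigl(L_{n-k+p-s}H_{2n-2s-k}(\widehat{X})_{\Q}^{s}\bigr)\subseteq L_pH_k(X)_{\Q}^{s}.
\]

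Finally, I would invoke the Inversion Theorem \ref{Th4}: the composites $F_{\widehat{X}}\circ F_X$ and $F_X\circ F_{\widehat{X}}$ are, up to the sign $(-1)^d$, given by the pullback $(-1)^*$ on $X$ and on $\widehat{X}$ respectively. Since $(-1)^*$ is its own inverse and commutes with the multiplication-by-$m$ action (as $(-1)\circ m=m\circ(-1)$), it preserves each eigenspace $L_pH_k(\,\cdot\,)_{\Q}^{s}$ and is an automorphism on it. Consequently the restrictions of $F_X$ and $F_{\widehat{X}}$ to the relevant eigenspaces are mutually inverse up to this automorphism, hence both are isomorphisms, proving the corollary.

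The main (and only) obstacle is the indexing sanity check in the second step; once one confirms that the indices $(p,k,s)$ map symmetrically under the two Fourier--Mukai transforms, the result is immediate from Proposition \ref{Prop7} and Theorem \ref{Th4}.
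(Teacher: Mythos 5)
Your proposal is correct and follows essentially the same route as the paper: both rely on Proposition \ref{Prop7} (the equivalence $(1)\Leftrightarrow(5)$) to see that $F_X$ and $F_{\widehat{X}}$ map the stated eigenspaces into each other, and then on Theorem \ref{Th4} to conclude that the restricted maps compose to the automorphism $(-1)^n(-1)^*$, hence are isomorphisms. The paper phrases this as separate injectivity/surjectivity checks while you package it as a two-sided inverse up to automorphism, but the ingredients and the index bookkeeping are identical.
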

\begin{proof}
To see the injectivity of $F_X$, let $\alpha\in L_pH_k(X)_{\Q}^{s}$ such that
\begin{equation}\label{eqn007}
F_X(\alpha)=0\in L_{n-k+p-s}H_{2n-2s-k}(\widehat{X})_{\Q}^s.
\end{equation}

Now we apply $F_{\widehat{X}}$ to both sides of Equation (\ref{eqn007}), we get
\begin{equation}\label{eqn008}
F_{\widehat{X}}\circ F_X(\alpha)= F_{\widehat{X}}(0).
\end{equation}
The right side of Equation (\ref{eqn008}) is obviously equal to zero. By Theorem \ref{Th4},
the left side of equation (\ref{eqn008}) is $(-1)^n(-1)^*_X(\alpha)$. Since $(-1)^*_X$ is an isomorphism, we get $\alpha=0$.

To see the surjectivity of $F_X$, for  $\beta\in  L_{n-k+p-s}H_{2n-2s-k}(\widehat{X})_{\Q}^{s}$, we have
$F_{\widehat{X}}(\beta)\in L_pH_k(X)_{\Q}^{s}$ by Proposition \ref{Prop7}. Set $\alpha:=(-1)^n(-1)^*_{\widehat{X}}$. Then
$F_X(\alpha)=\beta$ by applying Theorem \ref{Th4} to $\widehat{X}$.
\end{proof}

From the explanation in the beginning of this section,  if we define $$
H_k(X)_{\Q}^s:=\{ \alpha\in H_k(X)_{\Q}| m_{X*}\alpha=m^{k+s}\alpha, \forall ~m \in\Z\},
$$
then by applying $F_X$ on singular homology with rational coefficients, one gets
\begin{equation}\label{eq09}
H_k(X)_{\Q}^s=
\left\{
\begin{array}{lcc}
&H_k(X)_{\Q}, &s=0;\\
&0,&s\neq0.
\end{array}
\right.
\end{equation}

Now we will check the situation of the conjecture for low dimensional abelian varieties.
For those $X$ of $\dim X\leq 2$, Friedlander's result \cite[Th.4.6]{Friedlander1} and the Dold-Thom theorem
imply that Conjecture \ref{conj} holds.

\begin{example}
Conjecture \ref{conj} holds for abelian variety $X$ of dimension $3$ except possibly for $p=1, k\geq 4$ and $s=3-k$.
\end{example}
\begin{proof}
From the above discussion, we only  need to consider abelian varieties $X$ of dimension three. That is, we need to show $L_pH_k(X)^s_{\Q}=0$
except for $p=1, k\geq 4$ and $s=3-k$.

There are different cases according to $p$ and $k$. It is trivial when $p<0$ or $p>3$.
\begin{enumerate}
\item $p=0$.  In this case, one has the Dold-Thom isomorphism $L_0H_k(X)\cong H_k(X)$ and therefore by Equation (\ref{eq09})
we have
$$ L_0H_k(X)^s_{\Q}\cong H_k(X)_{\Q}^s=
\left\{
\begin{array}{lcc}
&H_k(X)_{\Q}, &s=0;\\
&0,&s\neq0.
\end{array}
\right.
$$
So we have $L_0H_k(X)_{\Q}^s=0$ for $s<0$ for all $k\geq 0$.

\item $p=2$. In this case, we obtain from Friedlander's theorem (cf. \cite[Th.4.6]{Friedlander1}) that
$$ L_2H_k(X)^s_{\Q}\cong H_k(X)_{\Q}^s=
\left\{
\begin{array}{lcc}
&H_k(X)_{\Q}, &s=0;\\
&0,&s\neq0.
\end{array}
\right.
$$
for $k\geq 5$. For $k=4$, $L_2H_k(X)^s_{\Q}\subseteq H_k(X)_{\Q}^s=0$ for $s<0$ by Equation (\ref{eq09}). So we have $L_2H_k(X)_{\Q}^s=0$ for $s<0$ for all $k\geq 4$.

\item $p=3$. In this case, by definition, the only nontrivial  $L_3H_k(X)$ occurs in the case that $k=6$. When $k=6$,
we have $L_3H_6(X)_{\Q}\cong H_6(X)_{\Q}$. So  $L_3H_6(X)_{\Q}^s\cong H_6(X)_{\Q}^s=0$ for $s<0$ by Equation (\ref{eq09}).

\item $p=1$.
\begin{enumerate}
\item $k=2$.
By applying  Corollary \ref{Cor14} to $X$, we
obtain that
$$
L_1H_2(X)^s_{\Q}\cong L_{2-s}H_{4-2s}(\widehat{X})_{\Q}^s.
$$
So if $s<0$, then $2-s>2$ and hence  $L_{2-s}H_{4-2s}(\widehat{X})_{\Q}^s=0$ by applying the above cases to $\widehat{X}$.

\item $k=3$. Again by applying  Corollary \ref{Cor14} to $X$, we get
$$
L_1H_3(X)^s_{\Q}\cong L_{1-s}H_{3-2s}(\widehat{X})_{\Q}^s.
$$
So if $s<0$, then $1-s\geq 2$ and hence  $L_{1-s}H_{3-2s}(\widehat{X})_{\Q}^s=0$ by the applying the above cases to $\widehat{X}$.

\item $k\geq 4$.  By applying  Corollary \ref{Cor14} to $X$, we get
$$
\begin{array}{ccl}
L_1H_k(X)^s_{\Q}&\cong& L_{4-k-s}H_{6-k-2s}(\widehat{X})_{\Q}^s\\
&=&0,\hbox{if $s\neq 3-k$.}
\end{array}
$$
\end{enumerate}
\end{enumerate}
\end{proof}

From the discuss above, we see that the mysterious part of $L_1H_k(X)_{\Q}$ for an abelian threefold $X$ is 
$L_1H_k(X)_{\Q}^{3-k}=\{\alpha\in L_1H_k(X)_{\Q}|m_{X*}\alpha=m^3\alpha \}$. For $k=2$, it is the Griffiths group with rational coefficients
of $1$-cycles on $X$; For $k=3$, conjecturally it is $T_1H_3(X,\Q)$; For $k\geq 4$, conjecturally it is zero.

Now we describe the relation between of Conjecture \ref{conj} and a weak version conjecture by Suslin
(see the appendix below for the  statement). In the usual (or strong) version of the Suslin conjecture,
 varieties are only required to be quasi-projective. Moreover, the strong version Suslin conjecture  also includes
the statement that the cycle class map $\Phi_{p,k}$ from Lawson homology to the singular homology
is injective for $k=n+p-1$.

\begin{proposition}\label{th19}
Conjecture \ref{conj} is equivalent to the (weak version) Suslin conjecture for Lawson homology with rational coefficients on abelian varieties.
\end{proposition}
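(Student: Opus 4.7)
The plan is to combine four ingredients: the eigenspace decomposition of Theorem~\ref{Th5}, the Fourier--Mukai isomorphism on eigenspaces (Corollary~\ref{Cor14}), the compatibility of the cycle class map with Fourier--Mukai (Proposition~\ref{prop6}), and the identification $L_qH_l(Y)_{\Q}^s = H_l(Y,\Q)^s$ for $q \leq 0$, which vanishes whenever $s \neq 0$ by Equation~(\ref{eq09}).

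I first treat Conjecture~\ref{conj} $\Rightarrow$ weak Suslin. Fix $k \geq n+p$ and decompose $L_pH_k(X)_{\Q} = \bigoplus_s L_pH_k(X)_{\Q}^s$. Conjecture~\ref{conj} kills the pieces with $s<0$. For $s>0$, Corollary~\ref{Cor14} identifies $L_pH_k(X)_{\Q}^s$ with $L_{p'}H_{k'}(\widehat X)_{\Q}^s$ where $p'=n+p-k-s \leq -s < 0$, so by the extended definition this sits in singular homology with $s \neq 0$ and vanishes. What remains, $L_pH_k(X)_{\Q}^0$, fits via Corollary~\ref{Cor14} and Proposition~\ref{prop6} into a commutative square whose other sides are the Fourier--Mukai isomorphism on Lawson homology (Corollary~\ref{Cor14} with $s=0$), the Mukai isomorphism on singular homology, and the identity on $H_{2n-k}(\widehat X,\Q)$; a diagram chase then forces $\Phi_{p,k,\Q}$ to be an isomorphism, as required by weak Suslin.

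For the converse, assume weak Suslin and take $(p,k,s)$ with $s<0$. Applying Corollary~\ref{Cor14} transports $L_pH_k(X)_{\Q}^s$ to $L_{p'}H_{k'}(\widehat X)_{\Q}^s$ with $(p',k') = (n+p-k-s,\; 2n-2s-k)$ and $k'-p' = n-s-p$. Three sub-cases are settled immediately: (a) if $k \geq n+p$, weak Suslin on $X$ sends $L^s$ injectively into $H_k(X,\Q)^s = 0$; (b) if $p' \leq 0$, the transported group lies in singular homology with the $s$-eigenspace zero; (c) if $s \leq -p$, equivalently $k' \geq n+p'$, weak Suslin applied to $\widehat X$ finishes the job after the transform.

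The main obstacle is the residual ``self-Fourier--Mukai-dual'' regime $p \geq 2$, $-p < s < 0$, and $k < n+p$, where both the original pair $(p,k)$ and its transform $(p',k')$ lie outside the Suslin range. To close this gap, I plan to iterate the Friedlander--Mazur $s$-map $s: L_qH_l \to L_{q-1}H_l$, which commutes with every $m_{X*}$ and hence preserves each eigenspace $L^s$. Setting $j = n+p-k \geq 1$, the iterated image $s^j(\alpha) \in L_{p-j}H_k(X)_{\Q}^s$ satisfies $p-j \leq k-n$, placing $(p-j,k)$ in sub-case (a) or (b), so $s^j(\alpha) = 0$. A downward induction on $p$---exploiting the long exact sequence for the $s$-map, whose neighbouring terms fall in ranges either already covered by sub-cases (a)--(c) or controlled by the induction hypothesis---then pins down $\ker(s^j)$ restricted to the eigenspace $L^s$ and propagates the vanishing back to $\alpha$, completing the proof.
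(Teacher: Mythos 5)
Your forward direction and the sub-cases (a), (b), (c) of your converse use exactly the machinery the paper invokes (Theorem~\ref{Th5}, Corollary~\ref{Cor14}, Proposition~\ref{prop6}, Equation~(\ref{eq09})), so on that part you and the paper agree. A small remark: your case (b) is redundant, since $p' = n+p-k-s \leq 0$ with $s<0$ already forces $k \geq n+p-s > n+p$, which is case (a).

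The substantive issue is the residual range $p\geq 2$, $-p<s<0$, $k<n+p$, which you correctly single out. The paper's converse is a two-line deduction from Equation~(\ref{eq10}) and Corollary~\ref{cor10} that is extremely terse on precisely this range: Equation~(\ref{eq10}) is derived under the hypothesis $k\geq n+p$, and Corollary~\ref{cor10} only gives vanishing for $s<p-k\;(\leq -p)$, so together these do not transparently cover $-p<s<0$ with $k<n+p$. Your instinct that something extra is needed here is a good one.

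But the fix you propose does not close the gap. The step $s^j(\alpha)=0$ for $j=n+p-k$ is fine, since $L_{p-j}H_k(X)_{\Q}^s$ lands either in the extended (i.e.\ Borel--Moore) range or at the edge $k=n+(p-j)$ of the Suslin range. The fatal step is the implicit claim that $s^j(\alpha)=0$ forces $\alpha=0$. The iterated $s$-map is very far from injective: already $s^p=\Phi_{p,k}$, whose kernel is $L_pH_k(X)_{hom}$, and for $k=2p$ this kernel is ${\rm Griff}_p(X)$, which need not vanish. More generally $\ker(s^j)\subseteq L_pH_k(X)_{hom}$ is not controlled by the data you have in hand. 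There is no ``long exact sequence for the $s$-map'' in the Friedlander--Mazur framework of the kind your sketch needs; the $s$-map is constructed from the Lawson suspension theorem and does not sit in a fibration sequence whose third term is computable here. A downward induction on $p$ hits the same wall at every stage, because knowing $L_{p-1}H_k(X)_{\Q}^s=0$ says that $s(\alpha)=0$, i.e.\ $\alpha\in\ker(s)$, and nothing forces $\ker(s)\cap L_pH_k(X)_{\Q}^s=0$. So this part of the converse remains a genuine gap in your proposal.
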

\begin{proof}
Let $X$ be an abelian variety of dimension $n$.  First we assume Conjecture \ref{conj}, i.e. $$L_pH_k(X)^s_{\Q}=0$$ for all $k\geq 2p$ and $s<0$.
Then for $k\geq n+p$, we obtain that
\begin{equation}\label{eq10}
\begin{array}{cclr}
 L_pH_k(X)_{\Q}
 &=&\bigoplus^{n-[\frac{k+1}{2}]}_{s=p-k}L_pH_k(X)_{\Q}^s& \hbox{(by Theorem \ref{Th5})}\\
 &=&\bigoplus^{n-[\frac{k+1}{2}]}_{s=0}L_pH_k(X)_{\Q}^s&                 \hbox{(by Conjecture \ref{conj})}\\
 &\cong & \bigoplus^{n-[\frac{k+1}{2}]}_{s=0}L_{n-k+p-s}H_{2n-2s-k}(\widehat{X})_{\Q}^{s}& \hbox{(by Corollary \ref{Cor14})}\\
  &\cong & \bigoplus^{n-[\frac{k+1}{2}]}_{s=0}H_{2n-2s-k}(\widehat{X})_{\Q}^{s}& \hbox{(since $n-k+p-s\leq 0$)}\\
 &= &H_{2n-k}(\widehat{X})_{\Q}^{0}& \hbox{(by Equation (\ref{eq09}))}\\
  &= &H_{2n-k}(\widehat{X})_{\Q}& \hbox{(by Equation (\ref{eq09}))}\\
    &\cong &H_{k}(X)_{\Q}& \hbox{(by Fourier Inversion)}\\
\end{array}
\end{equation}
and this is exactly the Suslin conjecture for $X$ with rational coefficients for $k\geq n+p$.

Conversely, we assume that the Suslin conjecture holds for $X$. Then by Equation (\ref{eq10}), we have $L_pH_k(X)^s_{\Q}=0$ for  $p-k\leq s<0$.
This together with Corollary \ref{cor10} implies that $L_pH_k(X)^s_{\Q}=0$ for all $s<0$.
\end{proof}

\begin{remark}
The weak version Suslin conjecture relates to  a Hard Lefschetz type conjecture  for Lawson homology (cf. \cite{Friedlander-Mazur}, \cite{Xu}).
Moreover, by applying an action of $SL(2,\Z)$ on the space of algebraic cycles modulo the algebraic equivalence, as observed by Beauville, 
Xu gives further direct sum decomposition of $L_pH_k(X)_{\Q}^s$ in terms of primitive elements. 
\end{remark}

The following question was asked by Friedlander and Lawson in the terminology  of their morphic cohomology (for definition,
see section \ref{section7} below).
By  Friedlander-Lawson's duality theorem (cf. \cite{Friedlander-Lawson2}) between the Morphic cohomology and Lawson homology,
an equivalent version in terms of Lawson homology is given in the following.

\begin{question}[Compare with Question 9.7 in {\cite{Friedlander-Lawson}}]\label{ques6.11}
For a smooth projective variety $X$ of dimension $n$, is $\Phi_{p,k}:L_pH_k(X)\to H_k(X)$ surjective for $k\geq n+p$?
\end{question}

Obviously, the Suslin conjecture says Friedlander-Lawson's answer to the above question is yes.
The answer is also yes to the question for abelian varieties, as given  in the morphic cohomology version
 (cf. \cite[Cor. 9.5]{Friedlander-Lawson2}). The affirmative answer to Question \ref{ques6.11} can be obtained by using the
 Friedlander-Lawson's duality theorem.
Now we give an alternative proof of the surjectivity of
$\Phi_{p,k}$, without  using the Friedlander-Lawson's duality theorem.

\begin{proposition}[Friedlander-Lawson]\label{Prop6.12}
For an abelian variety $X$ of dimension $n$, the cycle class map $\Phi_{p,k}:L_pH_k(X)\to H_k(X)$ surjective for $k\geq n+p$.
\end{proposition}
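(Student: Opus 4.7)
My plan is to use the Fourier--Mukai machinery developed in the preceding sections to transport the surjectivity question from $X$ to $\widehat{X}$. Under the hypothesis $k \geq n+p$, the transported cycle index becomes non-positive and the statement collapses to the Dold--Thom theorem. The argument runs cleanly with rational coefficients; upgrading to integer coefficients requires a separate ingredient.

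The first step is to localize on the zero eigenspace. By Proposition \ref{prop12}, the rational cycle class map $\Phi_{p,k,\Q}$ respects the decomposition of Theorem \ref{Th5}, while Equation (\ref{eq09}) says that $H_k(X,\Q)$ is concentrated in the $s=0$ summand. Hence the image of $\Phi_{p,k,\Q}$ coincides with the image of its restriction $\Phi_{p,k,\Q}|_{L_pH_k(X)_{\Q}^0}$, and it suffices to show the latter surjects onto $H_k(X,\Q)$. Applying Corollary \ref{Cor14} with $s=0$, the Fourier--Mukai transform gives an isomorphism $F_X : L_pH_k(X)_{\Q}^0 \xrightarrow{\cong} L_{n-k+p}H_{2n-k}(\widehat{X})_{\Q}^0$. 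The hypothesis $k \geq n+p$ forces $n-k+p \leq 0$, so the convention from Section 2 on negative cycle indices identifies the target with $L_0 H_{2n-k}(\widehat{X})_{\Q}^0$, which by Dold--Thom and Equation (\ref{eq09}) equals $H_{2n-k}(\widehat{X},\Q)$. Proposition \ref{prop6} then supplies a commutative square whose top arrow is the Fourier--Mukai isomorphism above, whose right vertical arrow is the Dold--Thom identification, whose left vertical arrow is $\Phi_{p,k,\Q}|_{L_pH_k(X)_{\Q}^0}$, and whose bottom arrow is the Fourier--Mukai transform on singular homology (an isomorphism). Three sides of the square being isomorphisms forces the fourth to be an isomorphism, which gives rational surjectivity (in fact, an isomorphism onto $H_k(X,\Q)$).

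The main obstacle is passing from $\Q$ to $\Z$ coefficients, since the Fourier--Mukai transform relies on $e^{\mP}$ and is only defined over $\Q$. For the integer statement I would argue separately via the Pontryagin product of the preceding section together with the classical presentation of $H_*(X,\Z)$ as the exterior algebra on $H_1(X,\Z) = L_0 H_1(X)$ under $*$, combined with the compatibility of $*$ with $\Phi_{p,k}$ proved there. Under the hypothesis $k \geq n+p$, one expects a pigeonhole-style count to produce at least $p$ ``full pairs'' inside any basis element of $\Lambda^k H_1(X,\Z)$ that can be realized as fundamental classes of one-dimensional abelian subvarieties of $X$; Pontryagin multiplying such cycle classes with elements of $L_0 H_{k-2p}(X)$ would then lift each integer generator into $L_p H_k(X)$. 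This last step requires some further structural input about $X$ and its sub-abelian varieties and is the part of the plan I am least sure of.
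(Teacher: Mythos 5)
Your rational-coefficient argument is correct and is essentially the paper's argument, reorganized for clarity. The paper proves Equation~(\ref{eq11}): a chain of isomorphisms showing $L_pH_k(X)_{\Q}\supseteq \bigoplus_{s\geq 0}L_pH_k(X)_{\Q}^s\cong H_k(X,\Q)$, using Theorem~\ref{Th5}, Corollary~\ref{Cor14}, the convention that $L_pH_*=L_0H_*$ for $p\leq 0$, and Equation~(\ref{eq09}). You make the same moves but isolate the $s=0$ summand first (which is harmless since Proposition~\ref{prop12} says the nonzero summands map to $H_k(X,\Q)^s=0$), and you phrase the conclusion via a commutative square with three isomorphic sides rather than a string of equalities. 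Both versions rely on the Dold--Thom identification of $L_{n-k+p}H_{2n-k}(\widehat{X})_\Q$ with singular homology for $n-k+p\leq 0$, and on the Fourier--Mukai inversion on singular homology. The substance is identical.

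The genuine gap is in your proposed passage from $\Q$ to $\Z$ coefficients. Your plan requires, inside each generator of $\Lambda^k H_1(X,\Z)$, the existence of $p$ ``full pairs'' that are fundamental classes of one-dimensional abelian subvarieties of $X$. This fails at the starting line: a generic abelian variety of dimension $\geq 2$ is simple and contains no proper nonzero abelian subvarieties, so for such $X$ there are no elliptic curves with which to form the pairs, regardless of any pigeonhole count. Moreover, even if elliptic subvarieties were available, an arbitrary decomposable element of $\Lambda^2 H_1(X,\Z)$ need not be the class of any algebraic curve. The paper handles the integral statement by a completely different mechanism: it invokes the Suslin conjecture with \emph{finite} coefficients, which is a known theorem as a consequence of the Milnor--Bloch--Kato conjecture (Voevodsky, Rost, et al.); surjectivity with $\Q$-coefficients together with surjectivity with $\Z/m$-coefficients for all $m$ yields surjectivity with $\Z$-coefficients. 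You should replace your Pontryagin-product sketch with this argument.
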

\begin{proof}
From the proof of Proposition \ref{th19}, we see that for any  abelian variety $X$ of dimension $n$,
$\Phi_{p,k}:L_pH_k(X)_{\Q}\to H_k(X)_{\Q}$ is surjective for all $k\geq n+p$.  The details are given as follows:
Since $k\geq n+p$, we have
\begin{equation}\label{eq11}
\begin{array}{cclr}
 L_pH_k(X)_{\Q}
 &=&\bigoplus^{n-[\frac{k+1}{2}]}_{s=p-k}L_pH_k(X)_{\Q}^s& \hbox{(by Theorem \ref{Th5})}\\
 &\supseteq &\bigoplus^{n-[\frac{k+1}{2}]}_{s=0}L_pH_k(X)_{\Q}^s&               \\
 &\cong & \bigoplus^{n-[\frac{k+1}{2}]}_{s=0}L_{n-k+p-s}H_{2n-2s-k}(\widehat{X})_{\Q}^{s}& \hbox{(by Corollary \ref{Cor14})}\\
  &\cong & \bigoplus^{n-[\frac{k+1}{2}]}_{s=0}H_{2n-2s-k}(\widehat{X})_{\Q}^{s}& \hbox{(since $n-k+p-s\leq 0$)}\\
 &= &H_{2n-k}(\widehat{X})_{\Q}^{0}& \hbox{(by Equation (\ref{eq09}))}\\
  &= &H_{2n-k}(\widehat{X})_{\Q}& \hbox{(by Equation (\ref{eq09}))}\\
    &\cong &H_{k}(X)_{\Q}& \hbox{(by Fourier Inversion)}\\
\end{array}
\end{equation}
This implies that $\Phi_{p,k}:L_pH_k(X)_{\Q}\to H_k(X)_{\Q}$ is surjective for $k\geq p+n$.  Since the Suslin conjecture with finite coefficients holds,
as from the work of Milnor-Bloch-Kato conjecture (now a theorem by by Voevodsky, Rost and others),
we obtain that  $\Phi_{p,k}:L_pH_k(X)\to H_k(X)$ is also surjective for $k\geq p+n$.
\end{proof}

\begin{remark}
From the above proposition and  a recent result by  Beilinson \cite{Beilinson},
one obtains an alternative proof of the Grothendieck standard conjecture of Lefschetz type for $X$ (see the appendix below for the statement).
The first proof of this Grothendieck standard conjecture of Lefschetz type  for abelian varieties was obtained by Lieberman \cite{Lieberman}.
\end{remark}

\begin{proposition} Let $X$ be an abelian variety of dimension $n$. Suppose the (strong version) Suslin conjecture holds for $X$.
Then for 1-cycles,  we have  $$L_1H_k(X)_{\Q}^0\cong T_1H_k(X)_{\Q}$$ for all $k\geq 2$ and there is a finite filtration on $L_1H_k(X)_{\Q}$ given by

$$
\begin{array}{ccl}
F^0L_1H_k(X)_{\Q}&=&L_1H_k(X)_{\Q},\\
F^1L_1H_k(X)_{\Q}&=&L_1H_k(X)_{hom,\Q},\\
F^2L_1H_k(X)_{\Q}&=&\ker AJ_X,\\
F^jL_1H_k(X)_{\Q}&=&\bigoplus_{j\geq s} L_1H_k(X)_{\Q}^s,\\
F^jL_1H_k(X)_{\Q}&=&0, j>>1,
\end{array}
$$
where $AJ_X$ is the Abel-Jacobi for Lawson homology, as defined by the author in \cite{Hu2}.

\end{proposition}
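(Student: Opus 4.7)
The plan is to combine the eigenspace decomposition of Theorem~\ref{Th5} with the Fourier-Mukai symmetry of Corollary~\ref{Cor14} and the strong Suslin conjecture. By Proposition~\ref{th19}, the strong Suslin conjecture for $X$ implies Conjecture~\ref{conj}, so
$$L_1H_k(X)_{\Q}=\bigoplus_{s\geq 0}L_1H_k(X)_{\Q}^s$$
is a finite direct sum; in particular, once the filtration is defined as $F^jL_1H_k(X)_{\Q}=\bigoplus_{s\geq j}L_1H_k(X)_{\Q}^s$, it vanishes for $j\gg 1$ automatically.

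For the identification $L_1H_k(X)_{\Q}^0\cong T_1H_k(X)_{\Q}$, Proposition~\ref{prop12} together with Equation~(\ref{eq09}) forces $\Phi_{1,k,\Q}$ to vanish on every eigenspace $L_1H_k(X)_{\Q}^s$ with $s\neq 0$, so its restriction to $L_1H_k(X)_{\Q}^0$ already surjects onto $T_1H_k(X)_{\Q}$. Injectivity is the nontrivial point: Corollary~\ref{Cor14} gives
$$L_1H_k(X)_{\Q}^0\cong L_{n-k+1}H_{2n-k}(\widehat{X})_{\Q}^0,$$
and the pair $(p',k')=(n-k+1,2n-k)$ satisfies $k'=n+p'-1$ for $2\leq k\leq n+1$, so the injectivity clause of strong Suslin on $\widehat{X}$ makes $\Phi_{p',k',\Q}$ injective on the right-hand side. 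For $k\geq n+1$ one instead applies the isomorphism clause of strong Suslin directly to $X$. Either way, Corollary~\ref{cor6.7} identifies the image with $T_1H_k(X)_{\Q}$. The layer $F^1=L_1H_k(X)_{hom,\Q}$ is then immediate: $\Phi_{1,k,\Q}$ is injective on the $s=0$ eigenspace and zero on the higher eigenspaces, so its kernel is exactly $\bigoplus_{s\geq 1}L_1H_k(X)_{\Q}^s$.

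The main obstacle is the identification $F^2=\ker AJ_X$. My strategy is to exploit the compatibility of the Abel-Jacobi map $AJ_X$ of \cite{Hu2} with the multiplication-by-$m$ action on source and target: on $L_1H_k(X)_{\Q}^s$ the endomorphism $m_{X*}$ acts by $m^{k+s}$, whereas the target of $AJ_X$ (a generalised intermediate Jacobian) carries an $m_X$-action of a single fixed weight, the analogue of Beauville's weight appearing in the classical Abel-Jacobi on Chow groups. Eigenvalue matching then forces $AJ_X$ to vanish on every $L_1H_k(X)_{\Q}^s$ with $s\geq 2$, while injectivity on $L_1H_k(X)_{\Q}^1$ will be extracted by combining the explicit Fourier-Mukai description of this eigenspace (via Corollary~\ref{Cor14}) with strong Suslin on $\widehat{X}$ to exclude nontrivial kernel, in direct analogy with Beauville's treatment of $\Ch_p(X)_{\Q}^1$ in \cite{Beauville2}. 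This weight-matching, although natural, is the technical heart of the argument and is the only step that is not a direct consequence of the earlier results in this section.
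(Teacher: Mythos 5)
Your proposal matches the paper's argument in all essentials: the paper also obtains the filtration from Theorem~\ref{Th5} together with Proposition~\ref{th19}, identifies $L_1H_k(X)_{\Q}^0\cong T_1H_k(X)_{\Q}$ by the same chain of isomorphisms through Corollary~\ref{Cor14} and Corollary~\ref{cor6.7}, and proves the Abel--Jacobi statement by exactly the eigenvalue matching you describe, namely that $m_{X*}$ acts by $m^{k+s}$ on $L_pH_k(X)_{\Q}^s$ but by $m^{k+1}$ on the target of $AJ_X$, so $AJ_X$ vanishes off the $s=1$ eigenspace.

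One remark: what that weight-matching gives is only the inclusion $\bigoplus_{s\geq 2}L_1H_k(X)_{\Q}^s\subseteq\ker AJ_X$; the reverse inclusion requires $AJ_X$ to be injective on $L_1H_k(X)_{\Q}^1$, which the paper does not address. You correctly single this out as the missing step, but your proposed route (transporting by Fourier--Mukai and invoking strong Suslin on $\widehat{X}$) is unlikely to close it as stated: the Suslin conjecture controls $\Phi_{p,k}$, not the Abel--Jacobi map, and the shift $L_1H_k(X)^1_{\Q}\cong L_{n-k}H_{2n-k-2}(\widehat{X})^1_{\Q}$ does not land in the range where Suslin's injectivity clause $k'=n+p'-1$ applies. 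So your awareness of the gap is a genuine improvement over the paper's write-up, but the sketch for filling it would need a different idea.
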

\begin{proof}From Theorem \ref{Th5}, we have a finite filtration given by
$$
F^jL_pH_k(X)_{\Q}=\bigoplus_{j\geq s} L_pH_k(X)_{\Q}^s, \quad j=p-k,p-k+1,\cdots, n-[(k+1)/2].
$$

Moveover, $F^{p-k}L_pH_k(X)_{\Q}=L_pH_k(X)_{\Q}$ and $F^{j}L_pH_k(X)_{\Q}=0$ for $j> n-[(k+1)/2]$.
By assumption  and Proposition \ref{th19}, we have
$$F^{j}L_pH_k(X)_{\Q}=L_pH_k(X)_{\Q}$$
for all $j\leq 0$.

By the (strong) Suslin conjecture for Lawson homology with rational coefficients, $\Phi_{p,k}:L_pH_k(X)_{\Q}\to H_k(X)_{\Q}$ is injective for $k=n+p-1$.
By applying Theorem \ref{Th5} to $L_pH_k(X)_{\Q}$ for $k=n+p-1$, we get
\begin{equation}\label{eqn12}
\begin{array}{cclr}
L_pH_k(X)_{\Q}
&=&\bigoplus^{n-[\frac{k+1}{2}]}_{s=p-k}L_pH_k(X)_{\Q}^s& \hbox{(by Theorem \ref{Th5})}\\
&=&\bigoplus^{n-[\frac{k+1}{2}]}_{s=0}L_pH_k(X)_{\Q}^s&                 \hbox{(by Proposition \ref{th19})}\\
&\cong & \bigoplus^{n-[\frac{n+p}{2}]}_{s=0}L_{1-s}H_{n-p+1-2s}(\widehat{X})_{\Q}^{s}& \hbox{(by Corollary \ref{Cor14})}\\
&\cong & L_{1}H_{n-p+1}(\widehat{X})_{\Q}^{0}\\
&&\oplus \bigoplus^{n-[\frac{n+p}{2}]}_{s=1}H_{n-p+1-2s}(\widehat{X})_{\Q}^{s}& \hbox{(by Dold-Thom Theorem)}\\
&= &L_1H_{n-p+1}(\widehat{X})_{\Q}^{0}& \hbox{(by Equation (\ref{eq09}))}\\
&\cong &L_pH_{n+p-1}({X})_{\Q}^0& \hbox{(by Fourier Inversion)}\\
\end{array}
\end{equation}
,
Therefore, $L_pH_{n+p-1}({X})_{\Q}^0\cong L_pH_{n+p-1}(X)_{\Q}$. Then the Sulin conjecture implies that
\begin{equation}\label{eqn13}
L_pH_{n+p-1}({X})_{\Q}^0\cong T_pH_{n+p-1}(X,\Q)
\end{equation}
and hence $F^1L_pH_k(X)_{\Q}=L_pH_k(X)_{hom,\Q}$. In particular, one has $F^1L_1H_k(X)_{\Q}=L_1H_k(X)_{hom,\Q}$  for $p=1$.

Under the assumption of the (strong) Suslin conjecture, for $k\geq 2$, we have the following isomorphisms
$$
\begin{array}{cclr}
L_1H_k(X)_{\Q}^0&\cong& L_{n+1-k}H_{2n-k}(\widehat{X})_{\Q}^0&\hbox{(by Corollary \ref{Cor14})}\\
&\cong&T_{n+1-k}H_{2n-k}(\widehat{X},\Q)&\hbox{(by Equation (\ref{eqn13}))}\\
&\cong&T_1H_k(X,\Q). &\hbox{(by Corollary (\ref{cor6.7}))}
\end{array}
$$

Note that the Abel-Jacobi map for the Lawson homology of a smooth
projective variety $X$ is a natural map $$AJ_X: L_pH_k(X)_{hom}\longrightarrow \bigg\{\bigoplus_{{r>
k-2p+1,r+s=k-2p+1}} H^{p+r,p+s}(X)\bigg\}^*\bigg/H_{k+1}(X,\Z).$$

So if $X$ is an abelian variety and $\alpha\in L_pH_k(X)_{\Q}^s\cap L_pH_k(X)_{hom,\Q}$, then
$$AJ_X(m_{X*}\alpha)=m_{X*}(AJ_X(\alpha)),$$
i.e. $m^{k+s}AJ_X(\alpha)=m^{k+1}AJ_X(\alpha)$ since the action of $m_{X*}$ on $H^{k+1}(X,\Q)$ is the
multiplication by $m^{k+1}$. Therefore $AJ_X(\alpha)=0$ unless $\alpha\in L_pH_k(X)_{\Q}^1$.
\end{proof}

\begin{remark}
From the above proposition, it is reasonable to conjecture for an abelian variety
$X$ that $L_pH_k(X)_{\Q}^0\cong T_pH_k(X,\Q)$ for all integers $k\geq 2p$. Then the above proposition
would hold for all $p\geq 0$.
\end{remark}

The direct sum decomposition for the Lawson homology of an abelian variety $X$ in Theorem \ref{Th5} is compatible with the Pontryagin product.

\begin{proposition}Let $X$ be an abelian variety of dimension $n$. Then the following diagram
$$
\xymatrix{L_pH_k(X)^s_{\Q}\otimes L_qH_l(X)^{s'}_{\Q}\ar[r]^-{*}\ar[d]^{i_{p,k}^s\otimes i_{q,l}^{s'}}&L_{p+q}H_{k+l}(X)^{s+s'}_{\Q}\ar[d]^{i_{p+q,k+l}^{s+s'}}\\
L_pH_k(X)_{\Q}\otimes L_qH_l(X)_{\Q}\ar[r]^-{*}&L_{p+q}H_{k+l}(X)_{\Q}\\
}
$$
is commutative, where $i_{p,k}^s:L_pH_k(X)^s_{\Q}\to L_pH_k(X)_{\Q}$ is the inclusion of the summand in the direct sum decomposition
(cf. Equation (\ref{eq04})).
\end{proposition}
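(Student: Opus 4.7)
The diagram commutativity reduces to showing that if $\alpha \in L_pH_k(X)_{\Q}^s$ and $\beta \in L_qH_l(X)_{\Q}^{s'}$, then their Pontryagin product satisfies $\alpha * \beta \in L_{p+q}H_{k+l}(X)_{\Q}^{s+s'}$; once this is established, the bottom horizontal map and the right vertical map just land in the same summand, making the square commute trivially. So the plan is to compute $m_{X*}(\alpha * \beta)$ and verify that the eigenvalue equals $m^{k+l+s+s'}$ for every $m \in \Z$.

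The key geometric input is that addition commutes with multiplication-by-$m$: for any $m \in \Z$, the diagram
\begin{equation*}
m_X \circ \mu = \mu \circ (m_X \times m_X) : X \times X \to X
\end{equation*}
commutes at the level of abelian group morphisms, hence at the level of correspondences, and therefore induces a commutative square on $L_*H_*(-)_{\Q}$ by Lemma \ref{lemma1} and Lemma \ref{lemma2}. Recall from Section 5 that $\alpha * \beta = \mu_*(p_1^*\alpha \bullet p_2^*\beta)$, where the external product $p_1^*\alpha \bullet p_2^*\beta \in L_{p+q}H_{k+l}(X \times X)_{\Q}$ is what I will denote $\alpha \times \beta$.

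Given this, I would carry out the following short calculation. First, functoriality of external products together with the eigenvalue assumptions on $\alpha$ and $\beta$ give
\begin{equation*}
(m_X \times m_X)_*(\alpha \times \beta) = (m_{X*}\alpha) \times (m_{X*}\beta) = m^{k+s}\alpha \times m^{l+s'}\beta = m^{k+l+s+s'}(\alpha \times \beta).
\end{equation*}
Then, pushing forward along $\mu$ and using the key identity above,
\begin{equation*}
m_{X*}(\alpha * \beta) = m_{X*}\mu_*(\alpha \times \beta) = \mu_*(m_X \times m_X)_*(\alpha \times \beta) = m^{k+l+s+s'}\mu_*(\alpha \times \beta) = m^{k+l+s+s'}(\alpha * \beta).
\end{equation*}
Since this holds for all $m \in \Z$, the definition of the eigenspace yields $\alpha * \beta \in L_{p+q}H_{k+l}(X)_{\Q}^{s+s'}$, which is the desired compatibility.

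The only nonroutine step is verifying the external product identity $(m_X \times m_X)_*(\alpha \times \beta) = (m_{X*}\alpha) \times (m_{X*}\beta)$, which is really just the assertion that the external product on Lawson homology is functorial with respect to push-forwards along products of morphisms. This follows from the construction of $\bullet$ via Gysin/base change maps and the projection formula cited earlier from \cite{Peters} and \cite{Friedlander-Gabber}, so no substantive new argument is needed; the entire proof is a formal consequence of the eigenspace definition and the fact that $m_X$ is a group homomorphism.
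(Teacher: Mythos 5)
Your proof is correct and follows essentially the same route as the paper's: both reduce the statement to showing $\alpha * \beta \in L_{p+q}H_{k+l}(X)_{\Q}^{s+s'}$, and both obtain this from the group-morphism identity $m_X \circ \mu = \mu \circ (m_X \times m_X)$ together with the eigenvalue hypotheses on $\alpha$ and $\beta$ and bilinearity of the Pontryagin product.
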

\begin{proof}
We need to show that  for $\alpha\in L_pH_k(X)^s_{\Q}$ and $\beta\in L_qH_l(X)^{s'}_{\Q}$, one has $\alpha*\beta\in L_{p+q}H_{k+l}(X)^{s+s'}_{\Q}$.
Since $\alpha\in L_pH_k(X)^s_{\Q}$, we have by definition $m_{X*}(\alpha)=m^{k+s}\alpha$. Similarly, $m_{X*}(\beta)=m^{l+s'}\beta$.
 It is enough to show that
$m_{X*}(\alpha*\beta)=m^{k+l+s+s'}\alpha*\beta$.
Note that by definition the two morphisms $\mu\circ (m_X,m_X)$ and $m_X\circ \mu$ coincide, i.e.,
$\mu\circ (m_X,m_X)=m_X\circ \mu:X\times X\to X$. So we have
the commutative diagram by the induced maps on Lawson homology groups
$$
\xymatrix{L_pH_k(X)_{\Q}\otimes L_qH_l(X)_{\Q}\ar[r]^-{\times}\ar[d]^{{m_{X*}}\otimes m_{X*}}&L_{p+q}H_{k+l}(X\times X)_{\Q}\ar[r]^-{\mu_*}&L_{p+q}H_{k+l}(X)_{\Q}\ar[d]^{m_{X*}}\\
L_pH_k(X)_{\Q}\otimes L_qH_l(X)_{\Q}\ar[r]^-{\times}&L_{p+q}H_{k+l}(X\times X)_{\Q}\ar[r]^-{\mu_*}&L_{p+q}H_{k+l}(X)_{\Q}.
}
$$
So we have
$$\mu*\circ \times \circ (m_{X*}\otimes m_{X*})=m_{X*}\circ \mu_*\circ \times :L_pH_k(X)_{\Q}\otimes L_pH_k(X)_{\Q}\to L_pH_k(X)_{\Q},$$
in other words,
$$
m_{X*}(\alpha*\beta)=m_{X*}\alpha*m_*\beta.
$$

Now since $\alpha\in L_pH_k(X)^s_{\Q}$ and  $\beta\in L_qH_l(X)^{s'}_{\Q}$, we have
$$
\begin{array}{cclr}
m_{X*} (\alpha*\beta)&=&m_{X*}\alpha*m_*\beta&\\
&=& (m^{k+s}\alpha) * (m^{l+s'}\beta)&\\
&=& m^{k+s+l+s'}\alpha*\beta.& \hbox{(by the bilinearity of $*$)}
\end{array}
$$
So by definition we get $\alpha*\beta\in L_{p+q}H_{k+l}(X)^{s+s'}_{\Q}$.
\end{proof}

However, we  cannot  hope the Pontryagin product  would bring more on the Abel-Jacobi map for Lawson homology, since $AJ_X$ is only
non-trivial on $L_pH_k(X)_{\Q}^1$, but the Pontryagin product of  $\alpha\in L_pH_k(X)_{\Q}^1$ and $\beta\in L_qH_l(X)_{\Q}^1$
is in $L_{p+q}H_{k+l}(X)_{\Q}^2$. So the Abel-Jacobi map $AJ_X$ is identically zero on $L_{p+q}H_{k+l}(X)_{\Q}^2$.


\section{Filtrations on Lawson homology} In this section we will define a filtration on Lawson homology, as an analogue of
the conjectural Bloch-Beilinson filtration for Chow groups.
For $X$ a smooth projective variety, then conjecturally, for every integer $k$, there exists a decreasing filtration $\tilde{F}^jL_pH_k(X)_{\Q}$
on the Lawson homology group with rational coefficients, satisfying the following properties:
\begin{enumerate}
\item $\tilde{F}^{j}L_pH_k(X)_{\Q}=0$ for $j>>1$.
\item The filtration is stable under the action of correspondences:

 If $\Gamma\in A_{\dim X+d}(X\times Y)$, then the maps
$$
\Gamma_*:L_pH_k(X)_{\Q}\to L_{p+d}H_{k+2d}(Y)_{\Q}
$$

satisfy
$$
\Gamma_*(\tilde{F}^{j}L_pH_k(X)_{\Q})\subseteq \tilde{F}^{j}L_{p+d}H_{k+2d}(Y)_{\Q}.
$$
\item The induced map
$$
Gr_{\tilde{F}}^j \Gamma_*: Gr_{\tilde{F}}^{j}L_pH_k(X)_{\Q}\to Gr_{\tilde{F}}^{j}L_{p+d}H_{k+2d}(Y)_{\Q}
$$
vanishes if the class $\Gamma$ is zero on $H_{k+j}(X,\Q)$, i.e.,
$$
0=[\Gamma]_*:H_{k+j}(X,\Q)\to H_{k+j+2d}(Y,\Q).
$$
\end{enumerate}

A candidate of such a filtration could be given in the following way by induction: Assume that
we have constructed $\tilde{F}^{j-1}L_qH_l(Y)_{\Q}$ for every $l\geq 2q$ and every smooth projective variety $Y$.
Then we set $$\tilde{F}^{j}L_pH_k(X)_{\Q}:={\rm span}\{Im ~\Gamma_*(\tilde{F}^{j-1}L_{p+r}H_{k+2r}(Y)_{\Q})|\Gamma\in A^{r+\dim X}(Y\times X)_{\Q}\},$$
where $\Gamma$ satisfies the condition that
$$
\Gamma_*: H_{k+2r}(Y,\Q)\to H_{k}(X,\Q)
$$
is zero.

For $X$ an abelian variety, if we set as before
$$
F^jL_pH_k(X)_{\Q}=\bigoplus_{j\geq s} L_pH_k(X)_{\Q}^s, \quad j=p-k,p-k+1,\cdots, n-[(k+1)/2].
$$

As an analogue in Chow theory (\cite{Murre}), it is reasonable to conjecture that the two filtrations for abelian varieties coincide.


\section{Morphic Cohomology}\label{section7}

There is a  cohomological version of Lawson homology, i.e., the Friedlander-Lawson morphic cohomology, is defined to be the homotopy group of algebraic cocycles. The topological group $\mathcal{Z}^q(X)$ of all algebraic cocycles
of codimension-$q$ on $X$ is defined as a homotopy quotient
completion (cf. \cite[Definition 2.8]{Friedlander-Lawson})
$$\mathcal{Z}^q(X):=[\mathfrak{Mor}(X,\mathcal{C}_0(\mathbb{P}^q))/
\mathfrak{Mor}(X,\mathcal{C}_0(\mathbb{P}^{q-1}))]^+=\mathfrak{Mor}(X,\mathcal{Z}_0(\mathbb{C}^q)).$$

The $(2q-k)$-th homotopy group of the space of algebraic cocycles instead of algebraic cycles, is defined to be  \emph{the Friedlander-Lawson morphic
cohomology group} and is denoted by $L^qH^k(X)$.

Let $X$ be an abelian variety. There is also an eigenspace decomposition of $L^qH^k(X)_{\Q}$ for each pair of $q,k$ such that $k\leq 2q$.
$$L^qH^k(X)_{\Q}^s:=\{\alpha\in L^qH^k(X)_{\Q}|m_X^*\alpha=m^{k-s}\alpha \}
$$

\begin{proposition}[Refined Friedlander-Lawson duality for Abelian varieties]\label{prop23}
There is an induced isomorphism
$$
L^qH^k(X)_{\Q}^s\cong L_{n-q}H_{2n-k}(X)_{\Q}^s
$$
from the Friedlander-Lawson duality for all integers $s$ and $k\leq 2q$, where $n=\dim X$.
\end{proposition}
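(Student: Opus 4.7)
The plan is to verify that the Friedlander--Lawson duality isomorphism $\mathcal{D}\colon L^qH^k(X)_{\Q}\xrightarrow{\sim} L_{n-q}H_{2n-k}(X)_{\Q}$ intertwines the eigenspace decompositions induced, on the cocycle side, by the pullback $m_X^*$ and, on the cycle side, by the pushforward $m_{X*}$. Since the proposition is stated pointwise (on eigenspaces), I do not need to first establish a direct-sum decomposition for morphic cohomology; it suffices to show that $\mathcal{D}$ carries the subspace defined by the cohomological eigenvalue condition onto the subspace defined by the homological one.

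First I would establish the naturality of the Friedlander--Lawson duality with respect to the isogeny $m_X\colon X\to X$: for every $\alpha\in L^qH^k(X)_{\Q}$,
\[
\mathcal{D}(m_X^*\alpha)=m_X^{*}\mathcal{D}(\alpha),
\]
where the right-hand $m_X^*$ is the flat (equivalently Gysin) pullback on Lawson homology; because $m_X$ is \'etale, this flat pullback agrees with the usual operation induced by $m_X$ regarded as a correspondence. This should follow directly from the construction of $\mathcal{D}$ via pairing with the universal algebraic cocycle, combined with standard compatibilities of Gysin/flat pullback with cap product that are already used in Section 3.

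Second, I would rewrite both eigenspace conditions as conditions on $m_X^*$. On the cohomology side, by definition $\alpha\in L^qH^k(X)_{\Q}^s$ exactly when $m_X^*\alpha=m^{k-s}\alpha$. On the homology side, Proposition \ref{Prop7} says that $\beta\in L_{n-q}H_{2n-k}(X)_{\Q}^{s'}$ is equivalent to $m_X^*\beta=m^{2n-(2n-k)-s'}\beta=m^{k-s'}\beta$. Combining with Step 1, for $\alpha\in L^qH^k(X)_{\Q}^s$ one gets
\[
m_X^*\mathcal{D}(\alpha)=\mathcal{D}(m_X^*\alpha)=m^{k-s}\mathcal{D}(\alpha),
\]
which forces $s'=s$ and hence $\mathcal{D}(\alpha)\in L_{n-q}H_{2n-k}(X)_{\Q}^{s}$. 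Running the same argument with $\mathcal{D}^{-1}$ (which enjoys the symmetric naturality) yields the reverse inclusion, and since $\mathcal{D}$ is already known to be an isomorphism on the ambient groups, its restriction is the claimed isomorphism on $s$-eigenspaces.

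The main technical point is Step 1: one must know that the Friedlander--Lawson duality is functorial for the pullback along the isogeny $m_X$. This is the only place that goes beyond formal manipulation with Proposition \ref{Prop7}; everything else is bookkeeping with eigenvalues. Granting this naturality (which is natural to expect from the definition of $\mathcal{D}$ in \cite{Friedlander-Lawson2} and the fact that $m_X$ is a flat, finite, unramified map of degree $m^{2n}$), the rest of the proof is immediate.
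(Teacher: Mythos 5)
Your proposal is correct, and it takes a mildly different route from the paper's own proof. Both arguments need a naturality property of the Friedlander--Lawson duality with respect to the isogeny $m_X$, but you and the paper choose different squares. The paper invokes the compatibility $\mathcal{D}\circ m_X^! = m_{X*}\circ\mathcal{D}$ (cohomological Gysin matched with homological pushforward, citing \cite[Prop.~5.5]{Friedlander-Lawson2}), then converts the cohomological eigenvalue condition $m_X^*\alpha=m^{k-s}\alpha$ into a condition on $m_X^!\alpha$ via the excess/degree formula $m_X^!m_X^*=\deg m_X=m^{2n}$, and reads off the eigenvalue for $m_{X*}(\mathcal{D}\alpha)$ directly. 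You instead use the dual (contravariant) square $\mathcal{D}\circ m_X^* = m_X^*\circ\mathcal{D}$ and feed the resulting eigenvalue of $m_X^*\mathcal{D}(\alpha)$ into the equivalence $(1)\Leftrightarrow(3)$ of Proposition~\ref{Prop7}. These are genuinely two sides of the same coin: the degree identity $m^{2n}$ enters the paper's proof directly through the excess formula, whereas it enters yours indirectly through the proof of $(2)\Leftrightarrow(3)$ in Proposition~\ref{Prop7}. Your version is a bit more economical once the contravariant naturality of $\mathcal{D}$ is granted, but that naturality does need to be cited explicitly rather than left as a plausibility argument --- it is exactly as much an input from \cite{Friedlander-Lawson2} as the commutative square the paper uses. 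The concluding step (invertibility and the reverse inclusion via $\mathcal{D}^{-1}$, or just noting that $\mathcal{D}$ is a bijection preserving eigenspaces) is handled the same way in both.
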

\begin{proof}
Let $\mathcal{D}:L^qH^k(X)\to L_{n-q}H_{2n-k}(X)$ be the the Friedlander-Lawson duality homomorphism (cf. \cite{Friedlander-Lawson2}).
We denote by the same notation for the coefficient extension map $\mathcal{D}:L^qH^k(X)_{\Q}\to L_{n-q}H_{2n-k}(X)_{\Q}$.
Since $\mathcal{D}$ is an isomorphism,  it is enough to show that  the image of $L^qH^k(X)_{\Q}^s$ under $\mathcal{D}$ is $L_{n-q}H_{2n-k}(X)_{\Q}^s$.
First note that there is a commutative diagram
\begin{equation}\label{eq13}
\xymatrix{L^qH^k(X)\ar[r]^-{\mathcal{D}}\ar[d]^{m_X^{!}}&L_{n-q}H_{2n-k}(X)\ar[d]^{m_{X*}}\\
L^qH^k(X)\ar[r]^-{\mathcal{D}}&L_{n-q}H_{2n-k}(X),
}
\end{equation}
where $m_X^{!}$ is the Gysin map induced by the map $m_X:X\to X$ (cf. \cite[Prop.5.5]{Friedlander-Lawson2}). By the Fulton's excess formula
for the morphic cohomology (also Lawson homology), we have $m_X^!m_X^*=\deg m_X$ (cf. \cite{Hu-Li}). The latter is equal to $m^{2n}$.

Now or $\alpha\in L^qH^k(X)_{\Q}^s$, by definition we have $m_{X}^*(\alpha)=m^{k-s}\alpha$. Hence $\alpha=\frac{1}{m^{k-s}}\cdot m_X^*\alpha$ and
\begin{equation}\label{eq14}
m_X^!\alpha=\frac{1}{m^{k-s}}\cdot m^!m_X^*\alpha=\frac{1}{m^{k-s}}\cdot m^{2n}\alpha=m^{2n-k+s}\alpha.
\end{equation}

From Equation (\ref{eq13}),  we have $\mathcal{D}\circ m_{X}^!=m_{X*}\circ \mathcal{D}$. So by this as well as Equation (\ref{eq14}),
 one gets
\begin{equation}\label{eq15}
m_{X*} (\mathcal{D}\alpha)=\mathcal{D}( m_{X}^!\alpha)= \mathcal{D}(m^{2n-k+s}\alpha)=m^{2n-k+s}\mathcal{D}\alpha.
\end{equation}
That is to say, $\mathcal{D}\alpha\in L_{n-q}H_{2n-k}(X)_{\Q}^s$. This gives us an isomorphism
\begin{equation*}
L^qH^k(X)_{\Q}^s\cong L_{n-q}H_{2n-k}(X)_{\Q}^s
\end{equation*}
for all integers $s$ and $k\leq 2q$.
\end{proof}

By this proposition, all the results in terms of Lawson homology in section \ref{sec6} have the corresponding morphic cohomological version obtained
by replacing $L^qH^k(X)_{\Q}^s$ by $L_{n-q}H_{2n-k}(X)_{\Q}^s$. For example, we have a decomposition for the morphic cohomology of an abelian variety.

\begin{proposition}\label{prop24}
Let $X$ be an abelian variety of dimension $n$. Then we have the following decomposition
 \begin{equation*}
 L^qH^k(X)_{\Q}\cong \bigoplus^{[\frac{k}{2}]}_{s=k-q-n}L^qH^k(X)_{\Q}^s.
 \end{equation*}
\end{proposition}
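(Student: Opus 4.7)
The plan is to deduce this decomposition by transporting the Lawson-homology decomposition of Theorem \ref{Th5} through the refined Friedlander-Lawson duality of Proposition \ref{prop23}.

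First, I would invoke Proposition \ref{prop23}, which gives, for every integer $s$, an isomorphism
$$
L^qH^k(X)_{\Q}^s \;\cong\; L_{n-q}H_{2n-k}(X)_{\Q}^s,
$$
induced by the (rational) Friedlander-Lawson duality $\mathcal{D}$. Since $\mathcal{D}:L^qH^k(X)_{\Q}\to L_{n-q}H_{2n-k}(X)_{\Q}$ is itself an isomorphism of the ambient groups, and since it carries each eigenspace summand isomorphically onto the corresponding one, it will suffice to decompose $L_{n-q}H_{2n-k}(X)_{\Q}$ via Theorem \ref{Th5} and transfer the decomposition back along $\mathcal{D}^{-1}$.

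Next, I would apply Theorem \ref{Th5} with $p:=n-q$ and $k':=2n-k$, obtaining
$$
L_{n-q}H_{2n-k}(X)_{\Q} \;=\; \bigoplus_{s=(n-q)-(2n-k)}^{n-[\frac{2n-k+1}{2}]} L_{n-q}H_{2n-k}(X)_{\Q}^s.
$$
The lower index is $(n-q)-(2n-k)=k-q-n$, exactly as claimed. For the upper index, I would check by a short case split on the parity of $k$ that
$$
n-\Bigl[\tfrac{2n-k+1}{2}\Bigr] \;=\; \Bigl[\tfrac{k}{2}\Bigr],
$$
using $\lfloor n-\tfrac{k}{2}+\tfrac{1}{2}\rfloor = n-\lfloor k/2\rfloor$ in both the even and odd cases. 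Combining this index identification with the Proposition \ref{prop23} isomorphism applied summand by summand then yields the stated decomposition.

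There is no serious obstacle here: the only thing to watch is that the eigenspaces in Proposition \ref{prop23} are defined via $m_X^*$ acting on morphic cohomology while those in Theorem \ref{Th5} are defined via $m_{X*}$ acting on Lawson homology, and that the compatibility diagram (\ref{eq13}) used in the proof of Proposition \ref{prop23} guarantees $\mathcal{D}$ intertwines them exactly as required, so that the direct sum splits in $L^qH^k(X)_{\Q}$ as well and not merely piecewise.
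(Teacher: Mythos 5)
Your proposal is correct and matches the paper's intended argument: the paper's one-line proof cites the duality isomorphism (Proposition~\ref{prop23}) and the abelian-variety decomposition, and your write-up simply supplies the substitution $p=n-q$, $k'=2n-k$ into Theorem~\ref{Th5} together with the index verification $n-[\tfrac{2n-k+1}{2}]=[\tfrac{k}{2}]$ and $(n-q)-(2n-k)=k-q-n$, which is exactly what is implicitly being done. (As a minor point, the paper's proof refers to Theorem~\ref{Th4}, the inversion theorem, but the decomposition actually comes from Theorem~\ref{Th5}; your use of Theorem~\ref{Th5} is the correct reading.)
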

\begin{proof}
It follows from Theorem \ref{Th4} and Proposition \ref{prop23}.
\end{proof}

\section{Semi-topological K-theory}

Recall that the (singular) \textbf{semi-topological $K$-theory} (denoted by $K_*^{sst}(-)$)
was introduced and developed by Friedlander and Walker in a sequence of papers (cf.
\cite{Friedlander-Walker}, \cite{Friedlander-Walker2}, \cite{Friedlander-Walker3}, \cite{Friedlander-Walker4} and reference therein).

Let  ${\mathcal K}^{sst}(X)$ be a homotopy-theoretic group
completion  of a space of maps of $X$ to an
infinite Grassmannian, topologized as in \cite{Friedlander-Walker3}.
 The \textbf{semi-topological $K$-group} $ K_j^{sst}(X)$ of $X$
is defined to be the $j$-th homotopy group of ${\mathcal K}^{sst}(X)$.
The rational  $K^{sst}$-groups is denoted by
$$K_j^{sst}(X)_{\mathbb{Q}}:=K_j^{sst}(X)\otimes{\mathbb{Q}}.
$$

One of the fundamental result in semi-topological $K$-theory
is that there is a natural isomorphism between rational  $K^{sst}$-groups and
certain direct sum of rational morphic groups:
\begin{theorem}[Friedlander-Walker \cite{Friedlander-Walker3}]\label{Rat-K-iso}
There is a natural isomorphism
$$  K_j^{sst}(X)_{\mathbb{Q}}\cong \bigoplus_{q\geq 0} L^{q}H^{2q-j}(X)_{\mathbb{Q}}, \quad j\geq 0
$$
for any smooth  (quasi-)projective variety $X$.
\end{theorem}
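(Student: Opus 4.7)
The plan is to imitate the classical Atiyah--Hirzebruch blueprint: build a spectral sequence from morphic cohomology converging to semi-topological $K$-theory, then use Adams operations to split it rationally. The roles mirror those of singular cohomology $\Rightarrow$ topological $K$-theory and the rational Chern character splitting in the classical setting.

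The first step is to produce a convergent spectral sequence of the form
$$E_2^{p,q} = L^{-q}H^{p-q}(X) \;\Longrightarrow\; K^{sst}_{-p-q}(X),$$
a semi-topological analogue of the motivic spectral sequence of Bloch--Lichtenbaum and Friedlander--Suslin. A natural construction is to filter $\mathcal{K}^{sst}(X)$ by codimension of support of cocycles, obtaining a tower whose successive layers are modelled on the cocycle spaces $\mathcal{Z}^q(X) = \mathfrak{Mor}(X, \mathcal{Z}_0(\mathbb{C}^q))$ introduced in Section \ref{section7}. The homotopy groups of these layers are precisely the morphic cohomology groups $L^qH^k(X)$, so the associated exact couple yields the required $E_2$-page and convergence to $K^{sst}_*(X)$.

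The second step is to construct Adams operations $\psi^\ell$ on $\mathcal{K}^{sst}(X)$ for each $\ell\geq 2$, lifted from algebraic exterior and symmetric power operations on vector bundles through the group completion that defines $\mathcal{K}^{sst}(X)$. One shows that $\psi^\ell$ preserves the codimension-of-support filtration and acts on the $q$-th associated graded piece by multiplication by $\ell^q$. Since the eigenvalues $\{\ell^q\}_{q\geq 0}$ are pairwise distinct, rationally the joint eigenspace decomposition under $\psi^\ell$ splits the filtration as a direct sum and identifies the $q$-th summand with $L^qH^{2q-j}(X)_{\mathbb{Q}}$. Naturality in $X$ is automatic since every construction is functorial in $X$.

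The main obstacle I expect is the construction of the Adams operations at the level of a point-set model of $\mathcal{K}^{sst}(X)$: one needs operations that are continuous in the algebraic families topology on the moduli of bundles, descend correctly through the group completion, and act with the predicted eigenvalues on the graded pieces of the filtration. A secondary technical point is strong convergence of the spectral sequence for quasi-projective $X$, where one must control the boundedness of the filtration; this is typically handled by finite-dimensionality of $X$ together with a Postnikov-tower argument on the cocycle spaces.
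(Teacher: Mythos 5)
The paper does not prove this theorem; it is quoted verbatim as a result of Friedlander and Walker \cite{Friedlander-Walker3}, so there is no internal proof to compare against. What I can do is assess whether your sketch matches the argument in that reference.

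Your overall blueprint --- a spectral sequence from morphic cohomology converging to semi-topological $K$-theory, rationally split by Adams operations --- is the right shape and is indeed how the result is obtained. However, two points in your sketch are not quite how the actual argument goes and would be hard to make work as stated. First, the spectral sequence is not built by a naive ``filter $\mathcal{K}^{sst}(X)$ by codimension of support of cocycles''; no useful geometric filtration of the group-completed mapping space is available directly. Instead Friedlander and Walker obtain their spectral sequence by applying the singular/semi-topologization functor (geometric realization over $\Delta^\bullet_{top}$) to the already-established Bloch--Lichtenbaum/Friedlander--Suslin tower in the algebraic setting, together with a recognition principle identifying the realized layers with the cocycle spaces $\mathcal{Z}^q(X)$ whose homotopy groups compute $L^qH^k(X)$. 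Second, while Adams operations on $K^{sst}$ do exist and do act by $\ell^q$ on the appropriate graded pieces, the splitting in \cite{Friedlander-Walker3} is most cleanly packaged as a semi-topological Chern character, induced from Bloch's rational Chern character $K_*^{alg}(X)_{\mathbb{Q}}\cong \bigoplus_q H^{2q-*}_{\mathcal{M}}(X,\mathbb{Q}(q))$ on the simplicial level, rather than constructed from scratch on a point-set model of $\mathcal{K}^{sst}(X)$. This sidesteps exactly the ``main obstacle'' you flag (continuity in the bundle topology, compatibility with group completion) because all the delicate constructions happen in the algebraic world and are then realized. So the strategy is sound, but the decisive idea you are missing is to transfer the algebraic spectral sequence and Chern character through semi-topologization, rather than to rebuild the filtration and operations directly on the semi-topological $K$-theory space.
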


Now let $X$ be an abelian variety. From Theorem \ref{Rat-K-iso} and Proposition \ref{prop23}, we have the following result.
\begin{corollary}\label{cor26}
Let $X$ be an abelian variety of dimension $X$. Then
$$
K_j^{sst}(X)_{\mathbb{Q}}\cong \bigoplus_{q\geq 0} \bigoplus^{[\frac{2q-j}{2}]}_{s=q-n-j}L^qH^{2q-j}(X)_{\Q}^s,
$$
In other words, $K_j^{sst}(X)_{\mathbb{Q}}$ is decomposed to be the direct sum of eigenspaces of the map $m^*_X:K_j^{sst}(X)_{\mathbb{Q}}\to K_j^{sst}(X)_{\mathbb{Q}}$.
\end{corollary}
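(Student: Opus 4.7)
The plan is to chain together the two non-trivial inputs already in hand: the Friedlander--Walker rational isomorphism (Theorem \ref{Rat-K-iso}) and the eigenspace decomposition of morphic cohomology (Proposition \ref{prop24}). No further geometry is needed; the content is an assembly plus a naturality check.

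First I would invoke Theorem \ref{Rat-K-iso} to write
$$
K_j^{sst}(X)_{\mathbb{Q}}\;\cong\;\bigoplus_{q\geq 0} L^{q}H^{2q-j}(X)_{\mathbb{Q}}.
$$
Then to each summand I apply Proposition \ref{prop24} with $k=2q-j$. The lower bound $k-q-n$ becomes $q-n-j$ and the upper bound $[k/2]$ becomes $[(2q-j)/2]$, producing
$$
L^{q}H^{2q-j}(X)_{\mathbb{Q}}\;\cong\;\bigoplus_{s=q-n-j}^{[(2q-j)/2]} L^{q}H^{2q-j}(X)_{\mathbb{Q}}^{s}.
$$
Summing over $q\geq 0$ yields the displayed formula.

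For the second assertion, that this really is the eigenspace decomposition of $m_X^{*}$ acting on $K_j^{sst}(X)_{\mathbb{Q}}$, I would use naturality of the Friedlander--Walker isomorphism with respect to the endomorphism $m_X:X\to X$. Naturality gives a commutative square in which $m_X^{*}$ on $K_j^{sst}(X)_{\mathbb{Q}}$ corresponds summand-wise to $m_X^{*}$ on each $L^{q}H^{2q-j}(X)_{\mathbb{Q}}$. By definition of $L^{q}H^{2q-j}(X)_{\mathbb{Q}}^{s}$, the piece in bidegree $(q,2q-j,s)$ is exactly the $m^{\,2q-j-s}$-eigenspace, so the direct sum above is stable under $m_X^{*}$ and splits it into its eigenspaces.

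The only step that requires any care is the naturality of the Friedlander--Walker isomorphism with respect to pullback along a morphism of varieties, since \cite{Friedlander-Walker3} states it as a natural isomorphism of functors on smooth (quasi-)projective varieties, so the $m_X^{*}$-compatibility is automatic once one has checked that both sides are covariantly or contravariantly functorial in the same way. I expect this to be the only subtle point; the rest is bookkeeping of indices.
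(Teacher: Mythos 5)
Your proof is correct and follows essentially the same route as the paper: Theorem \ref{Rat-K-iso} followed by the eigenspace decomposition of $L^qH^{2q-j}(X)_{\Q}$ via Proposition \ref{prop24} (the paper's proof cites Proposition \ref{prop23}, from which \ref{prop24} is derived). Your explicit naturality check for the $m_X^*$-eigenspace assertion is a welcome addition that the paper leaves implicit; the only further remark worth recording is that distinct pairs $(q,s)$ with the same value of $2q-j-s$ land in the same eigenvalue, so the displayed sum refines the eigenspace decomposition and must be grouped accordingly to read it as one.
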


\begin{proof} We have the following isomorphisms
$$
\begin{array}{cclr}
K_j^{sst}(X)_{\mathbb{Q}}&\cong& \bigoplus_{q\geq 0} L^{q}H^{2q-j}(X)_{\mathbb{Q}}&\hbox{(by Theorem \ref{Rat-K-iso})}\\
&\cong& \bigoplus_{q\geq 0} \bigoplus^{[\frac{2q-j}{2}]}_{s=q-n-j}L^qH^{2q-j}(X)_{\Q}^s.& \hbox{(by Proposition \ref{prop23} )}
\end{array}
$$
\end{proof}

Note that in Corollary \ref{cor26}, there is only finite many nonzero direct summands on the right side of the equation
since, for fixed $j$, $L^qH^{2q-j}(X)$ vanishes when $q$ large.  In particular,  for  an abelian variety of dimension three,
we give explicitly the following equations.

\begin{example} Let  $X$  be an abelian variety of $\dim X=3$. Then we have
\begin{equation}\label{eq16}
K_0^{sst}(X)_{\Q}\cong H^0(X,\Q)\oplus NS(X)_{\Q}\oplus L_1H_2(X)^0_{\Q}\oplus {\rm Griff}_1(X)_{\Q}\oplus H^6(X,\Q).
\end{equation}

\begin{equation}\label{eq17}
K_1^{sst}(X)_{\Q}\cong H^1(X,\Q)\oplus L_1H_3(X)^0_{\Q}\oplus  L_1H_3(X)^1_{\Q}\oplus H^5(X,\Q).
\end{equation}
and there is surjective map
\begin{equation}\label{eq18}
K_j^{sst}(X)_{\Q}\twoheadrightarrow K_j^{top}(X)_{\Q}, ~\forall j\geq 2.
\end{equation}

The weak version Suslin conjecture for Lawson homology with rational coefficients implies that
\begin{equation}\label{eq19}
K_j^{sst}(X)_{\Q}\cong K_j^{top}(X)_{\Q}, ~\forall j\geq 2
\end{equation}
where $K_j^{top}(X)_{\Q}$ is the $j$-th topological $K$-group with rational coefficients.
\end{example}
\begin{proof}
From Corollary \ref{cor26}, one has
$$
K_0^{sst}(X)_{\Q}\cong H^0(X,\Q)\oplus NS(X)_{\Q}\oplus  L^2H^4(X)_{\Q}^0\oplus L^2H^4(X)_{\Q}^1\oplus H^6(X,\Q).
$$
Note that $L^2H^4(X)_{\Q}^0\cong L_1H_2(X)^0_{\Q}$ is a finite dimensional $\Q$-vector space. Now Equation (\ref{eq16}) follows from
a fact that $L^2H^4(X)_{\Q}^1\cong L^2H^4(X)_{hom,\Q}={\rm Griff}_1(X)_{\Q}$ by Beauville (cf. \cite[Prop.6]{Beauville2}).
Equation (\ref{eq17}) follows from Corollary \ref{cor26} and Proposition \ref{prop23}.
Equation (\ref{eq18}) follows from Corollary \ref{cor26} and the surjectivity  of
$\Phi_{p,k}:L_pH_k(X)_{\Q}\to H_k(X)_{\Q}$ for all $k\geq n+p$.
Equation (\ref{eq19}) follows from  Corollary \ref{cor26}
the Atiyah-Hirzebruch isomorphism between the topological $K$-group with rational coefficients and the singular cohomology with rational coefficients,
and the fact the morphic cohomology is isomorphic to the singular cohomology for $X$ in these cases under the assumption of the weak Suslin conjecture.
\end{proof}

If we denote by $K_j^{sst}(X)_{\Q}^s=\{\alpha\in K_j^{sst}(X)_{\Q}| m^*_X(\alpha)=m^{s}\alpha\}$, then
$$\left\{
\begin{array}{lcl}
K_0^{sst}(X)_{\Q}^0&\cong&  H^0(X,\Q)\cong \Q;\\
K_0^{sst}(X)_{\Q}^2&\cong& NS(X)_{\Q};\\
K_0^{sst}(X)_{\Q}^3&\cong& {\rm Griff}^2(X)_{\Q}={\rm Griff}_1(X)_{\Q};\\
K_0^{sst}(X)_{\Q}^4&\cong& L^2H^4(X)^0\cong L_1H_2(X)^0;\\
K_0^{sst}(X)_{\Q}^6&\cong& H^6(X,\Q)\cong \Q;\\
K_0^{sst}(X)_{\Q}^s&=&0, \hbox{all other $s$}.
\end{array}\right.
$$

$$\left\{
\begin{array}{lcl}
K_1^{sst}(X)_{\Q}^1&\cong&  H^1(X,\Q);\\
K_1^{sst}(X)_{\Q}^3&\cong&  L^2H^3(X)^0_{\Q}\cong L_1H_3(X)^0_{\Q}; \\
K_1^{sst}(X)_{\Q}^4&\cong&  L^2H^3(X)^0_{\Q}\cong L_1H_3(X)^0_{\Q};\\
K_1^{sst}(X)_{\Q}^5&\cong&  H^5(X,\Q);\\
K_1^{sst}(X)_{\Q}^s &=&0,   \hbox{all other $s$}.
\end{array}\right.
$$

and

$$
K_j^{sst}(X)_{\Q}^s\cong H^s(X,\Q), \forall s\in \Z, j\geq 2.
$$


\section{Appendix}
In this appendix we will discuss and summary for convenience  the relations between a few  conjectures in  Lawson homology theory.
Some of them are known or implied in the literatures before.

Let $X$ be a smooth complex projective variety.
It was shown in \cite[\S 7]{Friedlander-Mazur} that the subspaces
$T_pH_k(X,{\mathbb{Q}})$ form a decreasing filtration (called the \emph{topological filtration}):
$$\cdots\subseteq T_pH_k(X,{\mathbb{Q}})\subseteq T_{p-1}H_k(X,{\mathbb{Q}})
\subseteq\cdots\subseteq
T_0H_k(X,{\mathbb{Q}})=H_k(X,{\mathbb{Q}})$$ and
$T_pH_k(X,{\mathbb{Q}})$ vanishes if $2p>k$.

Denote by
$G_pH_k(X,{\mathbb{Q}})\subseteq H_k(X,{\mathbb{Q}})$ the
$\mathbb{Q}$-vector subspace of $H_k(X,{\mathbb{Q}})$ generated by
the images of mappings $H_k(Y,{\mathbb{Q}})\rightarrow
H_k(X,{\mathbb{Q}})$, induced from all morphisms $Y\rightarrow X$ of
varieties of dimension $\leq k-p$.

The subspaces $G_pH_k(X,{\mathbb{Q}})$ also form a decreasing
filtration (called the \emph{geometric filtration}):
$$\cdots\subseteq G_pH_k(X,{\mathbb{Q}})\subseteq G_{p-1}H_k(X,{\mathbb{Q}})
\subseteq\cdots\subseteq G_0H_k(X,{\mathbb{Q}})\subseteq
H_k(X,{\mathbb{Q}})$$

Denote by $\tilde{F}_pH_k(X,{\Q})\subseteq
H_k(X,{\Q})$  the maximal sub-(Mixed) Hodge structure of span
$k-2p$. (See \cite{Grothendieck2} and \cite{Friedlander-Mazur}.) The sub-${\Q}$ vector spaces
$\tilde{F}_pH_k(X,{\Q})$ form a decreasing filtration of
sub-Hodge structures:
$$\cdots\subseteq \tilde{F}_pH_k(X,{\Q})\subseteq \tilde{F}_{p-1}H_k(X,{\Q})
\subseteq\cdots\subseteq \tilde{F}_0H_k(X,{\Q})\subseteq H_k(X,{\Q})$$ and $\tilde{F}_pH_k(X,{\Q})$ vanishes if $2p>k$. This
filtration is called the \emph{Hodge filtration}.

It was shown by Friedlander and Mazur that
\begin{equation}\label{eq20}
T_pH_k(X,{\mathbb{Q}})\subseteq G_pH_k(X,{\mathbb{Q}})\subseteq \tilde{F}_pH_k(X,{\Q})
\end{equation}
holds for any smooth projective variety $X$ and $k\geq 2p\geq0$.

Friedlander and Mazur proposed the following conjecture which relates Lawson homology theory to the central problems in the algebraic cycle theory.
\begin{conjecture}[Friedlander-Mazur conjecture, \cite{Friedlander-Mazur}]\label{conj9.1}
For any smooth projective variety $X$, one has
$$
T_pH_k(X,{\mathbb{Q}})= G_pH_k(X,{\mathbb{Q}}).
$$
\end{conjecture}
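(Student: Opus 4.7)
The plan is to establish the missing inclusion $G_pH_k(X,\Q) \subseteq T_pH_k(X,\Q)$, since the opposite direction in Equation (\ref{eq20}) is already known. By the $\Q$-linearity of $G_pH_k(X,\Q)$ and its definition, it suffices to fix a morphism $f\colon Y \to X$ with $\dim Y \leq k-p$ and show that the image of $f_*\colon H_k(Y,\Q) \to H_k(X,\Q)$ lies in $T_pH_k(X,\Q)$. Using resolution of singularities, together with the functoriality of $f_*$ under proper birational maps on both singular and Lawson homology, I would immediately reduce to the case in which $Y$ is smooth projective of dimension $d \leq k-p$.

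The next step is naturality. For a morphism of smooth projective varieties, the Friedlander-Mazur cycle class map satisfies $\Phi^X_{p,k}\circ f_* = f_* \circ \Phi^Y_{p,k}$, because it is realized by the $s$-map applied level-wise and the $s$-map commutes with proper pushforward. Consequently, if $\Phi^Y_{p,k,\Q}\colon L_pH_k(Y)_\Q \to H_k(Y,\Q)$ is surjective, then every class pushed forward from $H_k(Y,\Q)$ automatically lies in $T_pH_k(X,\Q)$, which is precisely what we want. The surjectivity we need on $Y$ is in the range $k \geq d+p$, and this is exactly the weak version of the Suslin conjecture on $Y$ (as formulated in the proof of Proposition~\ref{th19} and Proposition~\ref{Prop6.12}).

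The hard part will be establishing the weak Suslin conjecture for an arbitrary smooth projective variety of dimension $\leq k-p$. This is open in general; for abelian varieties the present paper reduces it to Conjecture~\ref{conj}, an analogue of Beauville's vanishing. A plausible line of attack is to combine Friedlander-Walker's semi-topological $K$-theory (Theorem \ref{Rat-K-iso}) with comparison to topological $K$-theory in the range $k\geq d+p$, translating the surjectivity question into a statement about motivic-to-topological comparison that is then accessible through the Bloch-Kato-Voevodsky machinery, parallel to how the surjectivity of $\Phi_{p,k}$ with finite coefficients is used in the proof of Proposition~\ref{Prop6.12}.

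Thus my proposal reduces Conjecture~\ref{conj9.1} to the weak Suslin conjecture for smooth projective varieties, and the remaining work is to prove that weak Suslin conjecture either uniformly (by $K$-theoretic/motivic methods) or, at the very least, for every $Y$ that admits a morphism to $X$ with image contributing to $G_pH_k(X,\Q)$. I expect the resolution step and the naturality step to be routine; the essential content of the conjecture is concentrated entirely in the weak Suslin surjectivity statement.
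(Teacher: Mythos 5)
This statement is a \emph{conjecture} in the paper, not a theorem, and the paper offers no proof of it -- so there is nothing to match your argument against. What you have written is not a proof of Conjecture~\ref{conj9.1} but a reduction of one open problem to another: you reduce the Friedlander--Mazur conjecture to the weak Suslin conjecture (surjectivity of $\Phi_{p,k}$ with rational coefficients in the range $k\geq p+\dim X$). That reduction is in fact essentially the converse direction of the paper's Lemma~\ref{lemma9.6}, which asserts exactly that surjectivity of $\Phi_{p,k}$ in that range for all smooth projective varieties is \emph{equivalent} to the Friedlander--Mazur conjecture for all smooth projective varieties; the paper in turn defers the nontrivial implication to the proof of Theorem~1.20 in \cite{Hu}. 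So the reduction step is sound and consistent with what the author records, but you have not produced new content there.

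The genuine gap is in the final paragraph, where you suggest attacking the weak Suslin conjecture via semi-topological $K$-theory together with Bloch--Kato--Voevodsky. This does not close: Bloch--Kato/Milnor--Bloch--Kato controls Lawson homology with \emph{finite} coefficients, and that machinery does not by itself yield the rational (or integral) surjectivity you need. In fact, in the only place the paper actually proves the weak Suslin statement (Proposition~\ref{Prop6.12}, for abelian varieties), the rational surjectivity is obtained by a genuinely different mechanism -- the Fourier--Mukai inversion and the eigenspace decomposition of Theorem~\ref{Th5} -- and finite-coefficient Suslin is only invoked afterwards to pass from rational to integral surjectivity. There is no known way to run the Bloch--Kato argument to obtain rational surjectivity for a general smooth projective $Y$; that rational statement is precisely where the open content sits. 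One further cosmetic caveat: the varieties $Y$ contributing to $G_pH_k(X,\Q)$ may be quasi-projective, so your reduction to the smooth projective case should be spelled out (resolve, compactify, and argue that the pushforward still factors appropriately); this is routine but not automatic. In short, your argument faithfully reproduces the known equivalence of the two conjectures but does not prove either one.
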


The Friedlander-Mazur conjecture remains open for general threefolds. However, it has been verified for some cases.
For example, it was shown to hold for general abelian
varieties (cf. \cite{Friedlander2}) or  abelian varieties for which the the generalized Hodge conjecture holds (cf. \cite{Abdulali});
It was also shown to hold for threefold $X$ with $h^{2,0}(X)=0$, in particular,
the complete intersection of dimension three (cf. \cite{Hu}).  It also holds for \emph{any} abelian threefold.
To see the last statement, we note that for $X$ a threefold $X$, it is enough to show Conjecture \ref{conj9.1} for the $p=1$.
Note that by Proposition \ref{Prop6.12}, $T_1H_k(X,{\mathbb{Q}})= G_1H_k(X,{\mathbb{Q}})$ for $k\geq 4$. By Proposition 1.15 in \cite{Hu},
one has $T_1H_3(X,{\mathbb{Q}})= G_1H_3(X,{\mathbb{Q}})$. Moreover, Friedlander and Mazur proved that
$T_1H_2(X,{\mathbb{Q}})= G_1H_2(X,{\mathbb{Q}})$ (cf. \cite[\S 7]{Friedlander-Mazur}).

\begin{conjecture}[The generalized Hodge conjecture, \cite{Grothendieck2} and \cite{Friedlander-Mazur}]\label{conj9.2}
For any smooth projective variety $X$, one has
$$
G_pH_k(X,{\mathbb{Q}})=\tilde{F}_pH_k(X,{\Q}).
$$
\end{conjecture}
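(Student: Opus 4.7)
The plan is to work inside the chain of inclusions $T_pH_k(X,\Q) \subseteq G_pH_k(X,\Q) \subseteq \tilde{F}_pH_k(X,\Q)$ from Equation~(\ref{eq20}) and to upgrade the right-hand inclusion to an equality. Granting Conjecture~\ref{conj9.1} one already has $T_p = G_p$, so the task collapses to showing that every maximal rational sub-Hodge structure of span $k-2p$ lies in the image of a cycle class map, i.e.\ that $\tilde{F}_p \subseteq T_p$. Grothendieck's classical route is to produce, for each rational sub-Hodge structure $V\subseteq H_k(X,\Q)$ of coniveau $\geq p$, a smooth projective $Y$ of dimension $\leq k-p$ and a morphism $f\colon Y\to X$ whose induced map on homology covers $V$; a Lefschetz-hyperplane/desingularisation argument then reduces matters to the usual Hodge conjecture on suitable auxiliary products.

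Within the framework of this paper the more natural attack is the abelian case, where Theorem~\ref{Th5} provides an $m_X$-eigenspace decomposition of $L_pH_k(X)_{\Q}$. Using Proposition~\ref{prop6} (compatibility of $F_X$ with the cycle class map) and Proposition~\ref{prop12} (compatibility of the decomposition with $\Phi_{p,k,\Q}$), this decomposition descends to a decomposition of $T_pH_k(X,\Q) \subseteq H_k(X,\Q)$. Since $m_X^*$ acts on $H^{a,b}(X)$ by $m^{a+b}$, the $m_{X*}$-eigenspaces are compatible with the Hodge bigrading, and the admissible range of eigenvalues in Theorem~\ref{Th5} can be matched against the Hodge types occurring in $\tilde{F}_p$. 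Proposition~\ref{Prop6.12} (surjectivity of $\Phi_{p,k,\Q}$ for $k\geq n+p$) would then account for the top of the filtration, while Corollary~\ref{Cor14} combined with Fourier inversion would transport the conclusion to the complementary range of $(p,k)$.

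The hard part, and the reason the statement is posed as a conjecture rather than a theorem, is precisely the production step: there is no known mechanism guaranteeing that a prescribed rational sub-Hodge structure of the right span is actually realised by a morphism from a lower-dimensional variety, nor by a Lawson class. Even in the abelian case the eigenspace-matching argument sketched above is conditional on Conjecture~\ref{conj} (equivalently, by Proposition~\ref{th19}, on the weak Suslin conjecture for Lawson homology), because without that input the eigenspaces indexed by $s<0$ might in principle harbour sub-Hodge structures invisible to the cycle class map. For a general smooth projective variety the plan degenerates into Grothendieck's reduction to the classical Hodge conjecture on products, and the conjecture remains out of reach of current techniques.
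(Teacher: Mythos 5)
The statement you were asked about is a \emph{conjecture}, not a theorem: it is Grothendieck's generalized Hodge conjecture, restated in the language of the geometric and Hodge filtrations, and the paper offers no proof of it. The only thing the paper says about it later is the logical equivalence, via Equation~(\ref{eq20}) and the implication chain ``generalized Hodge conjecture $\Rightarrow$ Hodge conjecture on $X\times X$ $\Rightarrow$ Grothendieck standard conjecture $\Rightarrow$ Friedlander--Mazur conjecture,'' which shows that Conjecture~\ref{conj9.2} together with Conjecture~\ref{conj9.1} is equivalent to $T_p=\tilde F_p$. You correctly recognize that no proof exists and that ``the conjecture remains out of reach of current techniques''; that is the honest conclusion and agrees with the paper.

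Two small points on the route you sketch. First, your reduction is stronger than necessary: to establish $G_p = \tilde F_p$ it suffices to show $\tilde F_p \subseteq G_p$, i.e.\ that a sub-Hodge structure of coniveau $\geq p$ is swept out by homology of varieties of dimension $\leq k-p$; you do not need to push all the way down into $T_p$, and invoking Conjecture~\ref{conj9.1} to collapse $T_p=G_p$ is a detour (indeed it conflates the generalized Hodge conjecture with the strictly stronger combined statement $T_p=\tilde F_p$). Second, the Lawson-theoretic machinery of this paper—the Fourier--Mukai eigenspace decomposition, Proposition~\ref{Prop6.12}, Corollary~\ref{Cor14}—has no bearing on $G_p$ versus $\tilde F_p$ for abelian varieties: those tools control the image of the cycle class map $\Phi_{p,k}$, hence $T_p$, but the generalized Hodge conjecture is a statement about when a sub-Hodge structure is supported on a proper subvariety, which is a different (and, for abelian varieties of dimension $\geq 4$, still open) production problem. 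So the abelian-case sketch does not actually buy any traction on Conjecture~\ref{conj9.2} beyond the general case.
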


There is a corresponding conjecture in terms of morphic cohomology (cf. \cite{Friedlander-Walker4}).
The equivalence between the homological version and the cohomological
version is given by using Friedlander-Lawson duality isomorphism (cf. \cite{Friedlander-Lawson2}). In  \cite{Friedlander-Walker4},
the cohomological version of Conjecture \ref{conj9.1} and \ref{conj9.2} combine to one conjecture $T_pH_k(X,{\mathbb{Q}})=\tilde{F}_pH_k(X,{\Q})$
which they called the (strong) Friedlander-Mazur conjecture.

Now let $X\subset \P^N$ be a smooth variety of dimension $n$ and let $H$ be a hyperplane section such that $Y:=X\cap H$ is smooth.
The Lefschetz operator $L: H^i(X,\Q)\to H^{i+2}(X,\Q)$ is defined by $L(\alpha)=\alpha\cup [Y]$, where $[Y]$ denotes the homology class of
$Y$ in $H^2(X,\Q)$. The Hard Lefschetz Theorem says
$$
L^{n-i}:H^i(X,\Q)\to H^{2n-i}(X,\Q)
$$
is an isomorphism. Note that this $L_X$ is given by the algebraic cycle $\Delta(Y)$, in other words, $L(-)=p_{2*}\big(p_1^*{(-)}\cap \Delta(Y)\big)$.

\begin{conjecture}[Grothendieck standard conjecture of Lefschetz type, \cite{Grothendieck}]
The inverse $\Lambda^{n-i}:H^{2n-i}(X,\Q)\to H^i(X,\Q)$ to  $L^{n-i}$ is given by an algebraic cycle for each $0\leq i\leq n$.
\end{conjecture}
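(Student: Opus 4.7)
The plan follows the strategy alluded to in the remark preceding the statement, namely combining Proposition~\ref{Prop6.12} with Beilinson's recent result to handle the abelian variety case (the only case relevant to this paper; the general case is famously open). The inverse Lefschetz operator $\Lambda^{n-i}$, as a cohomological correspondence, is a class in $H^{2i}(X \times X, \mathbb{Q})$; the conjecture asserts that this class lies in the image of the cycle class map $\Ch^i(X \times X)_\mathbb{Q} \to H^{2i}(X \times X, \mathbb{Q})$.

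The first step is to apply Proposition~\ref{Prop6.12} to the abelian variety $X \times X$ of dimension $2n$: the Lawson cycle class map $\Phi_{p,k}: L_pH_k(X\times X) \to H_k(X \times X)$ is surjective for $k \geq 2n + p$. This says that sufficiently high-degree homology classes on $X \times X$ are automatically algebraic. Translating $\Lambda^{n-i}$ to homological language via Poincar\'e duality, one seeks to realize a class in $H_{4n-2i}(X \times X, \mathbb{Q})$ by a $(2n-i)$-dimensional algebraic cycle; the surjectivity condition becomes $4n - 2i \geq 2n + (2n - i)$, i.e., $i \leq 0$. So Proposition~\ref{Prop6.12} alone suffices only in the trivial case.

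To bridge the gap for $i > 0$, the second step invokes Beilinson's recent result, which upgrades the rational surjectivity of the Lawson/motivic cycle map (via the Voevodsky--Rost proof of the Milnor--Bloch--Kato conjecture) to a wider range on abelian varieties. Concretely, combining this input with the explicit eigenspace decomposition of Theorem~\ref{Th5} and the Fourier--Mukai compatibility with products in Proposition~\ref{Prop4}, one should obtain enough algebraic correspondences on $X \times X$ to realize the class of $\Lambda^{n-i}$ for all $0 \leq i \leq n$. The eigenspace decomposition in particular lets one reduce the problem piece-by-piece: on each Beauville weight component one constructs the inverse as a polynomial in the polarization-induced operator, whose coefficients are algebraic in view of the Fourier inversion (Theorem~\ref{Th4}).

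The main obstacle is precisely the dimension gap noted above: for $i > 0$ the class of $\Lambda^{n-i}$ sits outside the natural range where Proposition~\ref{Prop6.12} produces algebraic lifts, so the argument cannot be purely internal to Lawson theory. Beilinson's input is essential for closing this gap, and as a cross-check one should recover Lieberman's classical proof for abelian varieties --- which builds $\Lambda$ explicitly from the Hodge $\mathfrak{sl}_2$-action and the polarization --- as a special case of this more motivic strategy.
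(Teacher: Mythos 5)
The statement you are trying to prove is a \emph{conjecture}, open in general for smooth projective varieties; the paper does not prove it, and you should not expect to either. The only place it is even touched is the remark after Proposition~\ref{Prop6.12}, which observes — without details — that for an \emph{abelian variety} the conjecture follows from Proposition~\ref{Prop6.12} together with a result of Beilinson \cite{Beilinson}, with Lieberman's Hodge-theoretic argument \cite{Lieberman} cited as the original proof. Your proposal correctly narrows the target to the abelian case, but the way you try to fill in the details does not work.

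The key misreading is of how Beilinson's theorem enters. As the paper records in the appendix, Beilinson's result is an \emph{equivalence} between the Grothendieck--Lefschetz conjecture for $X$ and the surjectivity of $\Phi_{p,k}\colon L_pH_k(X,\Q)\to H_k(X,\Q)$ for all $k\ge p+\dim X$. The intended argument is therefore a two-step black-box use: Proposition~\ref{Prop6.12} verifies the surjectivity hypothesis for the abelian variety $X$ of dimension $n$, and Beilinson's equivalence then delivers the conjecture for $X$. You instead try to \emph{re-derive} Beilinson's implication by applying Proposition~\ref{Prop6.12} to $X\times X$ and lifting the class of $\Lambda^{n-i}\in H^{2i}(X\times X,\Q)$ directly through the cycle map. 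Your dimension count is correct: for a $(2n-i)$-cycle in $H_{4n-2i}(X\times X)$ the range condition $k\ge p+\dim(X\times X)$ reads $4n-2i\ge 4n-i$, i.e.\ $i\le 0$. This is not a sign that the strategy is salvageable by a ``wider range'' of surjectivity; it is a sign that the direct lift on $X\times X$ is not how Beilinson's proof goes, and no such wider-range statement is available (nor is it what \cite{Beilinson} provides). Your second paragraph, which appeals to Theorem~\ref{Th5}, Proposition~\ref{Prop4}, and Theorem~\ref{Th4} to ``construct the inverse as a polynomial in the polarization-induced operator,'' is not an argument; it gestures at Lieberman's proof without reproducing it, and leaves the case $i>0$ entirely open. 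In short: cite Proposition~\ref{Prop6.12} to get surjectivity for $X$, cite Beilinson's equivalence to convert that surjectivity into the Lefschetz standard conjecture for $X$, and do not try to reprove Beilinson's direction by a naive cycle-class lift on $X\times X$ — that step genuinely fails for $i>0$, exactly as your dimension count shows.
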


In this case  $\Lambda^{n-i}$ is also called \emph{algebraic} for each $0\leq i\leq n$.

\begin{conjecture}[Hard Lefschetz conjecture for Lawson homology, \cite{Friedlander-Mazur}]\label{conj9.04}
Let $X$ be a smooth projective variety of dimension $n$ and let $h$ be a hyperplane section. Then the intersection
$$
h^k\bullet:L_pH_{n+k}(X)_{\Q}\to L_{p-k}H_{n-k}(X)_{\Q}, ~\alpha\mapsto[h]^{k} \bullet \alpha,
$$
is injective for $k\geq 1$, where $[h]$ is viewed as the class of $h$  in $L_{n-1}H_{2n-2}(X)_{\Q}$.
\end{conjecture}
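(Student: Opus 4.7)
The plan is to attack the conjecture first for abelian varieties, where the machinery of this paper is available, and then indicate why the general case requires additional input.

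First I would locate the hyperplane class in the eigenspace decomposition. For an abelian variety $X$ with an ample symmetric divisor $h$, Mumford's theorem gives $m_X^{*}h = m^2 h$, and combined with $m_{X*}\circ m_X^{*} = m^{2n}$ this yields $m_{X*} h = m^{2n-2} h$, so $h \in L_{n-1}H_{2n-2}(X)_{\Q}^{0}$. Next, using the ring-homomorphism property $m_X^{*}(\alpha\bullet\beta)=m_X^{*}\alpha\bullet m_X^{*}\beta$ (from Friedlander--Gabber) together with Proposition \ref{Prop7}, a short computation shows that intersection with an element of eigenspace $s'$ shifts the eigenspace index by $s'$. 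In particular $h^k\bullet$ sends $L_pH_{n+k}(X)_{\Q}^{s}$ into $L_{p-k}H_{n-k}(X)_{\Q}^{s}$, so it suffices to prove injectivity eigenspace by eigenspace.

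Second, I would handle the eigenspaces one at a time. For $s<0$, Conjecture \ref{conj} asserts the source vanishes, so there is nothing to prove. For $s=0$, the conjectural identification $L_pH_k(X)_{\Q}^{0}\cong T_pH_k(X,\Q)$ (cf.\ the remark after Corollary \ref{cor6.7}), combined with the compatibility of $\Phi_{p,k}$ with the cap action of $h$ and classical Hard Lefschetz on singular homology, gives injectivity. For $s>0$, I would use Corollary \ref{Cor14} to transport the problem across $F_X$ to an eigenspace on $\widehat{X}$, then use Proposition \ref{Prop4} to convert $h^k\bullet$ into Pontryagin multiplication by $F(h)^{*k}$ (up to sign), and finally move back to $X$ via the Inversion Theorem \ref{Th4}, hoping to reduce to either the $s=0$ or $s<0$ case.

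The principal obstacle, and the reason the conjecture is still open, is exactly the positive eigenspaces: the pieces $L_pH_k(X)_{\Q}^{s}$ with $s>0$ lie entirely in $L_pH_k(X)_{hom,\Q}$, so no classical Hard Lefschetz statement can detect them, and by Corollary \ref{Cor14} the Fourier--Mukai transform permutes positive eigenspaces among themselves rather than collapsing them to a better-understood object. Thus the best my approach can realistically give for abelian varieties is a reduction of Conjecture \ref{conj9.04} to Conjecture \ref{conj} (equivalently, by Proposition \ref{th19}, to the weak Suslin conjecture) together with the conjectural surjectivity $L_pH_k(X)_{\Q}^{0}\twoheadrightarrow T_pH_k(X,\Q)$. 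For a general smooth projective $X$ there is no decomposition analogous to Theorem \ref{Th5} to run this argument, so an unconditional proof in the general case appears to need genuinely new input.
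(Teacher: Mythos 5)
The statement you were asked about is Conjecture~\ref{conj9.04}, which the paper \emph{states but does not prove}: it is recorded in the appendix, attributed to Friedlander--Mazur, as one of several open problems the paper uses as reference points. There is therefore no proof in the paper to compare against, and your proposal is correct to treat the statement as open rather than to fabricate a proof.

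Your structural analysis for abelian varieties is sound as far as it goes. The placement $h\in L_{n-1}H_{2n-2}(X)_{\Q}^{0}$ (via $m_X^* h = m^2 h$ for a symmetric representative, hence $m_{X*}h = m^{2n-2}h$) is correct, and your verification that intersection with a class in eigenspace $s'$ shifts the eigenspace index by $s'$ (using $m_X^*(\alpha\bullet\beta)=m_X^*\alpha\bullet m_X^*\beta$ together with the equivalence $(2)\Leftrightarrow(3)$ in Proposition~\ref{Prop7}) is a correct and useful observation; in particular $h^k\bullet$ preserves the eigenspace index, so the problem does decompose. Your treatment of $s<0$ (vanishing conditional on Conjecture~\ref{conj}) and $s=0$ (reduce to classical Hard Lefschetz via $\Phi_{p,k}$, conditional on the conjectural identification $L_pH_k(X)_{\Q}^{0}\cong T_pH_k(X,\Q)$) is also correct reasoning. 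You are likewise right that the Fourier--Mukai transform preserves the eigenspace index $s$ (Corollary~\ref{Cor14}), so it cannot by itself push the positive eigenspaces into a region where anything is known, and this is precisely where the reduction stalls. Your honest conclusion --- that for abelian varieties Conjecture~\ref{conj9.04} reduces to Conjecture~\ref{conj} plus the conjectural $s=0$ identification, and that the general smooth projective case lacks even this reduction --- is accurate and consistent with how the paper itself treats this conjecture as genuinely open. One small caveat worth recording explicitly if you write this up: the reduction you describe only shows that the conjunction of Conjecture~\ref{conj} and the $s=0$ identification \emph{implies} Hard Lefschetz for abelian varieties; you should not suggest (and you did not) that the implication reverses.
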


The following conjecture is essential to  the structure of the Lawson homology for a smooth quasi-projective variety.

\begin{conjecture}[The Suslin conjecture for Lawson Homology with coefficient $A$, \cite{Friedlander-Haesemesyer-Walker}]\label{conj9.5}
For any abelian group $A$ and any smooth quasi-projective variety $X$ of dimension $n$,
the map $\Phi_{p,k}: L_pH_k(X,A) \to H_k(X,A)$ is an isomorphism for $k\geq n+p$ and is an injection for $k=n+p-1$.
\end{conjecture}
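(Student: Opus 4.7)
The plan is to attack Conjecture \ref{conj9.5} by separating it into coefficient regimes and reducing the geometric setup. First, using Lima-Filho's localization sequences for Lawson homology, together with standard dévissage by smooth compactifications $X \subset \overline{X}$ with complement a simple-normal-crossing divisor, one can reduce essentially to the case where $X$ is smooth projective and proceed by induction on dimension. Next I would split the statement according to the coefficient group $A$: by the universal coefficient formalism it suffices to treat $A = \Q$ and $A = \Z/\ell^r$ separately, with the integral conclusion then following from the Bockstein sequence.

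For finite coefficients $A = \Z/\ell^r$, the Friedlander-Walker comparison between semi-topological and topological $K$-theory with finite coefficients, combined with the now-proven Milnor-Bloch-Kato conjecture due to Voevodsky, Rost and others, gives the isomorphism $\Phi_{p,k}$ in the required range; this is the step the paper already invokes at the end of the proof of Proposition \ref{Prop6.12}. So the heart of the problem is the rational case, and within it the surjectivity of $\Phi_{p,k,\Q}$ for $k \geq n+p$, since the injectivity at $k = n+p-1$ should then fall out from an Abel-Jacobi argument using $AJ_X$ from \cite{Hu2}, provided one can show that the regulator controls the full kernel of the cycle class map on the Griffiths-type piece.

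For the rational surjectivity I would try to factor $\Phi_{p,k,\Q}$ through the geometric and Hodge filtrations recalled in the appendix. A class $\xi \in H_k(X,\Q)$ with $k \geq n+p$ should come from a subvariety $Y \subset X$ of dimension $\leq k - p$ by the generalized Hodge conjecture \ref{conj9.2} applied to the maximal sub-Hodge structure $\tilde{F}_p H_k(X,\Q)$; the Friedlander-Mazur conjecture \ref{conj9.1} would then place $\xi$ in $T_p H_k(X,\Q)$, i.e.\ in the image of $\Phi_{p,k,\Q}$. In the abelian variety case treated in the paper, this whole chain can be bypassed by the Fourier-Mukai machinery, which reduces rational surjectivity to the weight decomposition of Theorem \ref{Th5} and the vanishing Conjecture \ref{conj}.

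The main obstacle is exactly that chain: even for abelian varieties, Proposition \ref{th19} shows that the weak rational Suslin conjecture is \emph{equivalent} to the open Conjecture \ref{conj}, and for a general smooth projective $X$ one must furthermore control both \ref{conj9.1} and \ref{conj9.2}. Honest progress requires genuinely new input -- for instance a Bloch-Beilinson-type filtration on Lawson homology with a vanishing theorem above a prescribed weight, a motivic spectral sequence converging to $L_pH_k(X,\Q)$ whose differentials can be shown to die in the Suslin range, or a direct construction of algebraic lifts of singular classes in high homological degree. Any serious proposal has to confront this gap head-on; the remaining parts of the argument (localization, coefficient reduction, finite coefficients via Bloch-Kato, and injectivity via the Abel-Jacobi map) are comparatively routine once the rational surjectivity is in hand.
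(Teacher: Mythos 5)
This statement is labeled as a \emph{conjecture} in the paper (Conjecture~\ref{conj9.5}), and the paper gives no proof of it; indeed the entire appendix is devoted to explaining why it is open and how it relates to the Friedlander--Mazur conjecture, the generalized Hodge conjecture, and the Grothendieck standard conjecture of Lefschetz type. So there is no ``paper's own proof'' to compare against. Your proposal is, to its credit, honest on exactly this point: you lay out a plausible reduction strategy but state explicitly that the rational surjectivity for $k\geq n+p$ cannot currently be established without new input, and you correctly identify (via Proposition~\ref{th19} for abelian varieties and, implicitly, via Beilinson's result in the general case) that this core step is equivalent to other deep open conjectures. In short, you have written a survey of the problem and of known partial results, not a proof, and you say so; this is the right posture toward a statement the paper itself treats as conjectural.

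A few of the ``routine'' reductions you sketch deserve more caution than you give them. The localization d\'evissage from quasi-projective to projective does exist (Lima-Filho), but the boundary terms involve Lawson homology of the closed complement $Z$, which has smaller dimension, so its own Suslin range $k\geq\dim Z+p$ shifts relative to that of $X$; one has to check degree bookkeeping carefully in the five-lemma, and $Z$ need not be smooth. The universal-coefficient/Bockstein reduction to $\Q$ and $\Z/\ell^r$ is delicate because $L_pH_k(X)$ need not be finitely generated: knowing $\Phi_{p,k}$ is an isomorphism rationally and with all finite coefficients does not by itself force an integral isomorphism without some finiteness or Mittag-Leffler control. And the injectivity at $k=n+p-1$ via the Abel--Jacobi map of \cite{Hu2} would require an injectivity theorem for $AJ_X$ on the homologically trivial part that is itself an open problem of comparable depth. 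None of these gaps is a sign you misunderstood the landscape; rather they reinforce your own conclusion that a genuine proof must confront the rational surjectivity head-on and cannot be assembled from the reductions alone.
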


This is an analogue of the Beilinson-Lichtenbaum conjecture in motivic cohomology theory.
When $A$ is a finite abelian group, the Milnor-Bloch-Kato conjecture(or theorem) implies that
the Suslin conjecture for Lawson Homology with coefficient $A$.

The weak version  Suslin conjecture for Lawson homology only requires $X$ be projective and  $\Phi_{p,k}$ be isomorphic for $k\geq n+p$. Clearly,
the weak version  Suslin conjecture is much weaker than the (strong) Suslin conjecture. However, it is  mentioned separately
since the weak version Suslin conjecture  for Lawson homology with integer coefficients
implies  the  Grothendieck standard conjecture of Lefschtez type (see below).

\medskip
These conjectures (Conjecture \ref{conj9.1}-\ref{conj9.5}) hold for all smooth projective varieties of dimension less or equal than two.
However, as far as I know,  all of them are still open even for a general smooth projective variety of dimension three.
Known cases  and statements on these conjectures can be found in the literature (cf. \cite{Beilinson}, \cite{Friedlander2},  \cite{Friedlander-Haesemesyer-Walker}, \cite{Friedlander-Mazur}, \cite{Friedlander-Walker4}), \cite{Hu}, \cite{Lima-Filho}, \cite{Peters}, \cite{Xu},
\cite{Voineagu}, etc. and the references therein).

\begin{lemma}\label{lemma9.6}
The maps $\Phi_{p,k}: L_pH_k(X,\Q) \to H_k(X,\Q)$ are surjective for all smooth projective variety $X$ and $k\geq p+\dim X$
is equivalent to the Friedlander-Mazur conjecture holds for all smooth projective variety.
\end{lemma}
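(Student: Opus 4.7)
The lemma asserts an equivalence; I prove each direction using the inclusion $T_pH_k(X,\Q)\subseteq G_pH_k(X,\Q)$ from~(\ref{eq20}) and the naturality of the cycle class map $\Phi_{p,k,\Q}$ under proper push-forward.

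The implication ``Friedlander--Mazur conjecture $\Rightarrow$ surjectivity'' is essentially a tautology. Fix smooth projective $X$ with $k\geq p+\dim X$. Then the identity $\id_X\colon X\to X$ is itself a morphism from a variety of dimension $\leq k-p$, so the image of $(\id_X)_*=\id$ on $H_k(X,\Q)$ is all of $H_k(X,\Q)$; hence $G_pH_k(X,\Q)=H_k(X,\Q)$. Applying the conjectured equality $T_p=G_p$ yields $T_pH_k(X,\Q)=H_k(X,\Q)$, i.e.\ surjectivity of $\Phi_{p,k,\Q}$.

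For the converse, assume $\Phi_{p,k,\Q}$ is surjective on every smooth projective variety whenever $k\geq p+\dim X$, and fix smooth projective $X$ with $k\geq 2p$. Only $G_pH_k(X,\Q)\subseteq T_pH_k(X,\Q)$ requires proof. A generator has the form $f_*(\beta)$ for some $f\colon Y\to X$ with $\dim Y\leq k-p$ and $\beta\in H_k(Y,\Q)$. After reducing to $Y$ smooth projective (discussed below), the hypothesis $k\geq p+\dim Y$ is automatic, so the surjectivity assumption on $Y$ lifts $\beta$ to $\gamma\in L_pH_k(Y,\Q)$. The naturality square
\begin{equation*}
\xymatrix{L_pH_k(Y,\Q)\ar[r]^-{\Phi_{p,k,\Q}}\ar[d]^{f_*}&H_k(Y,\Q)\ar[d]^{f_*}\\ L_pH_k(X,\Q)\ar[r]^-{\Phi_{p,k,\Q}}&H_k(X,\Q)}
\end{equation*}
then gives $f_*(\beta)=\Phi_{p,k,\Q}(f_*\gamma)\in T_pH_k(X,\Q)$, closing the inclusion.

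The only nontrivial step is the reduction to smooth $Y$. By Hironaka there is a resolution $\pi\colon\tilde Y\to Y$ with $\tilde Y$ smooth projective of the same dimension, but $\pi_*\colon H_k(\tilde Y,\Q)\to H_k(Y,\Q)$ need not be surjective because of mixed weights on $H_k(Y,\Q)$ when $Y$ is singular. The rescue is that $H_k(X,\Q)$ is pure (of weight $-k$) for smooth projective $X$, so by strictness of morphisms of mixed Hodge structures $f_*$ factors through the pure weight quotient of $H_k(Y,\Q)$; and this quotient is precisely the image of $\pi_*$. Hence one can find $\tilde\beta\in H_k(\tilde Y,\Q)$ with $f_*(\beta)=(f\circ\pi)_*(\tilde\beta)$, and the argument proceeds with $\tilde Y$ in place of $Y$. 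This is the main technical obstacle; alternatively, one may simply observe that the geometric filtration $G_p$ admits an equivalent definition using only smooth projective test varieties, making the reduction tautological.
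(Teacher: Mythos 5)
Your proof is correct. The forward direction is the same as the paper's; the converse is a genuinely different, self-contained route. The paper reduces to the range $2p\le k\le n+p-1$ (the range $k\ge n+p$ follows directly from the hypothesis) and then defers to the proof of Theorem 1.20 in \cite{Hu}, remarking only that the injectivity part of the Suslin hypothesis is not used there. You instead handle all $k\ge 2p$ uniformly by the key observation that the surjectivity hypothesis should be applied not to $X$ but to the smooth projective test variety $Y$ in the definition of $G_pH_k(X,\Q)$: the bound $\dim Y\le k-p$ forces $k\ge p+\dim Y$, so the hypothesis lifts any class in $H_k(Y,\Q)$ to Lawson homology, and naturality of $\Phi_{p,k}$ under proper push-forward finishes. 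This yields a proof readable without consulting \cite{Hu} and makes explicit that only the surjectivity half of the Suslin conjecture enters.

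Two minor points on the reduction to smooth projective test varieties. First, $Y$ in Friedlander--Mazur's definition of $G_p$ is only a variety, so one should pass to a projective model (say the closure of the graph of $f$ in $\bar Y\times X$) before resolving. Second, the phrase ``$f_*$ factors through the pure weight quotient\ldots which is precisely the image of $\pi_*$'' conflates a sub with a quotient; the clean statement, via strictness and purity of $H_k(X,\Q)$, is $\mathrm{Im}(f_*)=f_*\bigl(W_{-k}H_k(Y,\Q)\bigr)$ together with $W_{-k}H_k(Y,\Q)=\mathrm{Im}(\pi_*)$ for projective $Y$. Your fallback --- that $G_p$ is unchanged on restricting to smooth projective test varieties --- is standard and is the cleaner route.
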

\begin{proof}
On one side, it is clear that if the Friedlander-Mazur conjecture for a smooth projective variety $X$, then $T_pH_k(X,\Q)=G_pH_k(X,\Q)$ for
$k\geq p+\dim X$. Since $k\geq p+\dim X$, $G_pH_k(X,\Q)=H_k(X,\Q)$ and so $T_pH_k(X,\Q)=H_k(X,\Q)$, i.e.,
$\Phi_{p,k}: L_pH_k(X,\Q) \to H_k(X,\Q)$ is surjective.

One the other side,  we need to show that for any smooth projective variety $Y$, $T_pH_k(Y,\Q)=G_pH_k(Y,\Q)$ for all $k\geq 2p$.
 By assumption, we only need to show  $T_pH_k(Y,\Q)=G_pH_k(Y,\Q)$ for all $2p\leq k\leq n+p-1$. This was done in the proof
 of Theorem 1.20 in \cite{Hu}, where we base on a stronger  assumption (i.e. the Suslin conjecture). However,
 the injection for $\Phi_{p,k}$ is not really used in that proof.
\end{proof}

\begin{proposition}
The Friedlander-Mazur conjecture is equivalent to the Grothen- dieck standard conjecture of Lefschtez type. More precise,
the Friedlander-Mazur conjecture holds for all smooth projective varieties  if and only if  $\Lambda$ is algebraic
for every smooth projective varieties.
\end{proposition}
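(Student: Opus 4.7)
My plan is to prove both implications by passing through Lemma \ref{lemma9.6}, which identifies the Friedlander-Mazur conjecture (on all smooth projective varieties) with the surjectivity of $\Phi_{p,k}: L_pH_k(X,\Q)\to H_k(X,\Q)$ for every smooth projective $X$ and every $k \geq p + \dim X$. The forward direction will be a direct consequence of the correspondence action on Lawson homology; the reverse direction will rest on Beilinson's recent theorem.

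For ``Lefschetz type $\Rightarrow$ Friedlander-Mazur,'' fix $X$ of dimension $n$ and a pair $(p,k)$ with $k\geq p+n$, and set $j = 2n-k$, so that $0\leq j\leq n-p$. By hypothesis the Hard Lefschetz inverse $\Lambda^{n-j}: H^{2n-j}(X,\Q)\to H^j(X,\Q)$ is represented by an algebraic cycle $\Gamma\in A_{2n-j}(X\times X) = Corr_{n-j}(X,X)$. I would first check that the correspondence action from Section 3, $\Gamma_*: L_0H_j(X)_{\Q}\to L_{n-j}H_{2n-j}(X)_{\Q}$, composed with $\Phi_{n-j,2n-j}$ and the Dold-Thom identification $L_0H_j(X)=H_j(X)$, coincides via Poincar\'e duality with the bijection on singular homology induced by $\Lambda^{n-j}$. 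Hence $\Phi_{n-j,2n-j}$ is surjective, and the decreasing property $T_{p'}H_k(X,\Q)\subseteq T_{p'-1}H_k(X,\Q)$ of the topological filtration then forces $T_pH_k = H_k$ for every $p \leq k-n = n-j$. Thus $\Phi_{p,k}$ is surjective for all $k\geq p+\dim X$, and Lemma \ref{lemma9.6} yields the Friedlander-Mazur conjecture on every smooth projective variety.

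For the reverse direction, Lemma \ref{lemma9.6} turns the Friedlander-Mazur conjecture (on all smooth projective varieties) into surjectivity of $\Phi_{p,k}$ in the range $k\geq p+\dim X$ for every smooth projective $X$. I would then invoke the recent theorem of Beilinson \cite{Beilinson}---the same input already used in the remark following Proposition \ref{Prop6.12} to deduce the Lefschetz standard conjecture for abelian varieties---which converts this top-degree cycle-class surjectivity on a given $X$ into the algebraicity of $\Lambda$ on $X$. Running this for every smooth projective variety gives the Lefschetz standard conjecture in full generality.

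The main obstacle is the reverse direction. A naive attempt to realize $\Lambda^{n-j}$ directly as an algebraic cycle on $X\times X$ by applying Lemma \ref{lemma9.6} to $X\times X$ fails: the class of $\Lambda^{n-j}$ lives in $H^{2j}(X\times X,\Q)\cong H_{4n-2j}(X\times X,\Q)$, whereas the range $k\geq p+\dim(X\times X)$ on the $2n$-dimensional variety $X\times X$ forces $4n-2j\geq(2n-j)+2n$, i.e.\ $j\leq 0$. The non-formal passage from top-degree cycle-class surjectivity on $X$ to algebraicity of $\Lambda$ on $X$ is precisely what Beilinson's theorem supplies, and I treat it as a black box in this plan.
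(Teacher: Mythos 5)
Your proposal is correct and follows essentially the same route as the paper: Lemma \ref{lemma9.6} translates the Friedlander--Mazur conjecture into top-degree surjectivity of $\Phi_{p,k}$ on all smooth projective varieties, and Beilinson's theorem supplies the equivalence of that surjectivity with the algebraicity of $\Lambda$. The only divergence is in the direction ``Lefschetz $\Rightarrow$ Friedlander--Mazur'': the paper simply cites Friedlander \cite{Friedlander2}, whereas you reproduce the argument directly, letting the correspondence that induces $\Lambda^{n-j}$ act on $L_0H_j(X)_{\Q}\cong H_j(X,\Q)$, invoking naturality of $\Phi$ with respect to correspondences, and then climbing down the decreasing topological filtration from $T_{n-j}H_{2n-j}$ to $T_pH_k$ for all $p\leq k-n$. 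This makes the forward implication self-contained (it is, in effect, the elementary half of Beilinson's equivalence), at the cost of some index-chasing that the paper outsources to the reference; your observation at the end --- that trying to get $\Lambda$ directly from Lemma \ref{lemma9.6} applied to $X\times X$ leads to a degree mismatch, which is exactly why Beilinson's input is non-formal --- is a good sanity check and correctly isolates the one genuinely hard step.
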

\begin{proof}
On one side, we note that the Grothendieck standard conjecture of Lefschtez type implies the Friedlander-Mazur conjecture (cf. \cite{Friedlander2}).
On the other side, the Friedlander-Mazur conjecture implies that $\Phi_{p,k}: L_pH_k(X,\Q) \to H_k(X,\Q)$ is surjective for all $k\geq p+\dim X$.
The surjectivity of $\Phi_{p,k}$ for  all $k\geq p+\dim X$ is equivalent to  the Grothendieck standard conjecture of
Lefschtez type (cf. \cite{Beilinson}).

\end{proof}

\begin{proposition}
The generalized Hodge conjecture holds for all smooth projective varieties implies the  Conjecture \ref{conj9.1}. More precisely,
$$
``G_pH_k(X,{\mathbb{Q}})=\tilde{F}_pH_k(X,{\Q}), ~ all~ X" \Leftrightarrow ``T_pH_k(Y,{\mathbb{Q}})=\tilde{F}_pH_k(Y,{\Q}), ~all~ Y".
$$
\end{proposition}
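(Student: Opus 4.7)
The plan is to prove the equivalence by handling each direction separately, relying on the sandwich
$$T_pH_k(X,\Q) \subseteq G_pH_k(X,\Q) \subseteq \tilde{F}_pH_k(X,\Q)$$
from (\ref{eq20}) and on the preceding proposition, which identifies the Friedlander--Mazur conjecture with the Grothendieck standard conjecture of Lefschetz type.

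For the easy direction ($\Leftarrow$), I would first assume $T_pH_k(Y,\Q) = \tilde{F}_pH_k(Y,\Q)$ for every smooth projective $Y$. Specializing to $Y = X$, equality of the two outer terms of the sandwich forces the squeezed middle term $G_pH_k(X,\Q)$ to coincide with them as well. Since $X$ is arbitrary, this is precisely the universal generalized Hodge conjecture.

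For the reverse direction ($\Rightarrow$), assume GHC universally, so $G_p = \tilde{F}_p$ throughout. Because $T_p \subseteq \tilde{F}_p$ is automatic, it is enough to establish the Friedlander--Mazur conjecture $T_p = G_p$ for all smooth projective varieties. By the preceding proposition this is equivalent to the Grothendieck standard conjecture of Lefschetz type, so my remaining task is to deduce the latter from GHC. I would carry this out in two classical steps. First, GHC in the special degree $k = 2p$ reduces to the usual Hodge conjecture: the maximal sub-Hodge structure of span zero inside $H_{2p}(X,\Q)$ is exactly the space of rational Hodge classes (via Poincar\'e duality to the $(n-p,n-p)$ part of $H^{2n-2p}(X,\Q)$), while $G_pH_{2p}(X,\Q)$ is by definition the $\Q$-span of fundamental classes of $p$-dimensional subvarieties of $X$. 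Second, the Hodge conjecture applied to $X \times X$ forces the K\"unneth components of the diagonal to be algebraic, and then a standard Hard Lefschetz argument realizes the inverse Lefschetz operator $\Lambda$, viewed as a correspondence on $X \times X$, as a Hodge class and hence an algebraic cycle; this is exactly the Grothendieck standard conjecture of Lefschetz type.

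The main obstacle is the second step: extracting algebraicity of $\Lambda$ from algebraicity of Hodge classes on $X \times X$. Although the ingredients are classical, it is the only place in the argument where one must actually manipulate correspondences rather than merely invoke the formal containment $T_p \subseteq G_p \subseteq \tilde{F}_p$. Once this implication is in hand, the preceding proposition converts the Lefschetz standard conjecture into the Friedlander--Mazur conjecture, which combined with the GHC hypothesis yields $T_p = G_p = \tilde{F}_p$ universally, closing the loop.
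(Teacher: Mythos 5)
Your proof is correct and follows essentially the same route as the paper: the $\Leftarrow$ direction is immediate from the sandwich $T_p\subseteq G_p\subseteq\tilde{F}_p$ of Equation~(\ref{eq20}), and the $\Rightarrow$ direction specializes GHC to $k=2p$ to recover the classical Hodge conjecture, applies it on $X\times X$ to deduce the Grothendieck standard conjecture of Lefschetz type, and then uses the preceding proposition (i.e., Friedlander's result) to conclude the Friedlander--Mazur conjecture, which combined with the GHC hypothesis gives $T_p=\tilde{F}_p$. The only difference is cosmetic: you flesh out the classical implication ``Hodge conjecture for $X\times X$ $\Rightarrow$ Lefschetz standard conjecture for $X$'' (via the observation that $\Lambda$ is a Hodge class), whereas the paper simply cites this as known.
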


\begin{proof}
The part $``\Leftarrow"$  follows directly from the assumption and Equation (\ref{eq20}).
For the part $``\Rightarrow"$, we assume the generalized Hodge conjecture holds
for all smooth projective varieties, in particular, it holds for $X\times X$ in the case that $k=2p$, which is the classical Hodge conjecture.
It is known the Hodge conjecture for $X\times X$ implies that the Grothendieck standard conjecture for $X$, which holds for all smooth
projective varieties implies that Conjecture \ref{conj9.1} holds for all smooth projective varieties.
\end{proof}



\begin{remark}
From the above discuss we see that the generalized Hodge conjecture and the Suslin conjecture for Lawson homology with integer coefficients dominate
many key problems in the theory of Lawson homology. So solutions to those problems for general smooth projective varieties is
 the most difficult problems in this field. An alternative way  to deal with problems in  Lawson homology theory
  would be the study on those varieties carrying special structures.
\end{remark}

\section*{Acknowledgements}
I would like to thank Baohua Fu for useful conversions during my visiting Chinese Academy of Science
during December 2010. The author is  grateful the Max-Planck Institute for Mathematics at Bonn for its hospitality
and financial support during my visiting period.
The project was partially sponsored by SRF for ROCS, SEM and  NSFC(11171234).

\end{document}